\newtheorem{theorem}{Theorem}
\newtheorem{corollary}[theorem]{Corollary}
\newtheorem{definition}[theorem]{Definition}
\newtheorem{example}[theorem]{Example}
\newtheorem{proposition}[theorem]{Proposition}
\newtheorem{remark}[theorem]{Remark}
\newcommand{\C}{\mathbb{C}}
\newcommand{\K}{\mathbb {K}}
\newcommand{\A}{\mathcal{A}}
\newcommand{\G}{\mathcal{G}}
\newcommand{\B}{\mathcal{B}}
\newcommand{\bb}{\mathfrak{B}}
\newcommand{\h}{\mathcal{H}}
\newcommand{\beq}{\begin{eqnarray}}
\newcommand{\eeq}{\end{eqnarray}}
\newcommand{\beqs}{\begin{eqnarray*}}
\newcommand{\eeqs}{\end{eqnarray*}}
\newcommand{\bpro}{\begin{pro}}
\newcommand{\epro}{\end{pro}}
\newcommand{\blem}{\begin{lem}}
\newcommand{\elem}{\end{lem}}
\newcommand{\bdfn}{\begin{dfn}}
\newcommand{\edfn}{\end{dfn}}
\newcommand{\bcor}{\begin{cor}}
\newcommand{\ecor}{\end{cor}}
\newcommand{\bthm}{\begin{thm}}
\newcommand{\ethm}{\end{thm}}
\newcommand{\bex}{\begin{ex}}
\newcommand{\eex}{\end{ex}}
\newcommand{\brmk}{\begin{rmk}}
\newcommand{\ermk}{\end{rmk}}
\newcommand{\bpr}{\begin{pr}}
\newcommand{\epr}{\end{pr}}
\newcommand{\benum}{\begin{enumerate}}
\newcommand{\eenum}{\end{enumerate}}
\newcommand{\bitem}{\begin{itemize}}
\newcommand{\eitem}{\end{itemize}}
\newcommand{\br}[1]{   [ \cdot,    \cdot  ]   }
\newcommand{\g}{\frak{g}}
\newcommand{\ad}{\mathrm{ad}}
\newcommand{\cqfd}{\hfill{\square}}
\chardef\bslash=`\\
\numberwithin{equation}{section}
\numberwithin{table}{section}
\numberwithin{theorem}{section}
\title[A pre-Jacobi-Jordan algebra]{On a pre-Jacobi-Jordan algebra: relevant properties and double construction\footnote{Preprint: ICMPA-UL/2019/12 } }
\author{Cyrille Essossolim Haliya$^\ast$}
\address[$\ast$]
{University of Abomey-Calavi,
International Chair in Mathematical Physics and Applications,
ICMPA-UNESCO Chair, 072 BP 50, Cotonou, Rep. of Benin}
\email{cyzille19@gmail.com}
\author{Gb\^ev\`ewou Damien  Houndedji$^\dagger$}
\address[$\dagger$]{University of Abomey-Calavi,
International Chair in Mathematical Physics and Applications,
ICMPA-UNESCO Chair, 072 BP 50, Cotonou, Rep. of Benin}
\email{ houndedjid@gmail.com}
\begin{document}
\maketitle

\today

\bigskip
\begin{abstract}
	We introduce a pre-Jacobi-Jordan algebras and study some relevant properties such as  bimodules, matched pairs. Besides, we established a pre-Jacobi-Jordan algebra built as a direct sum of a given pre-Jacobi-Jordan algebra $(\A, \cdot)$ and its dual $(\A^{\ast}, \circ),$ endowed with a 
non-degenerate symmetric bilinear form $B,$ where $\cdot$ and $\circ$ are the products defined on $\A$ and $\A^{\ast},$ respectively. Finally, after pre-Jacobi-Jordan algebras classification in dimension two, we thoroughly give some double constructions of pre-Jacobi-Jordan algebraic structures.
  \\
{
{\bf Keywords. (pre)Jacobi-Jordan algebra, bimodule, matched pair, double construction}
}\\
{\bf  MSC2010.}  16T25, 05C25, 16S99, 16Z05.
\end{abstract}

\section{Introduction}
 Jacobi-Jordan algebras (JJ algebras for short) were introduced in \cite{[DB]} in 2014 as vector
spaces $\A$ over a field k, equipped with a bilinear map $\cdot: \A\times\A \to \A$ satisfying the Jacobi identity and instead of the skew-symmetry condition valid for Lie algebras the
commutativity $x \cdot y = y \cdot x$, for all $x$, $y \in A$ is imposed. Apparently this class of algebras appear under different names in the litterature reflecting, perhaps, the fact that it was considered from different viewpoints by different communities, sometimes not aware of each other's results. In \cite{[1],[2],[3]} and other Jordan litterature,these algebras are called \textit{Jordan algebras of nil index 3}. In \cite{[OK]} they are called \textit{Lie-Jordan algebras} (superalgebras are also considered there). In \cite{[Pasha]} and \cite{[Zumano]} they were called \textit{mock-Lie algebras}. 

In \cite{[WO]} W\"orz-Busekros relates these type of algebras  with Bernstein algebras. One crucial remark is that JJ algebras
are examples of the more popular and well-referenced Jordan algebras  \cite{[ee],[13]} introduced in
order to achieve an axiomatization for the algebra of observables in quantum mechanics.
In \cite{[DB]} the authors achieved the classification of these algebras up to dimension 6 over
an algebraically closed field of characteristic different from 2 and 3.  As it was explained in the paper and  in  \cite{[ee]}, JJ algebras are objects fundamentally different from both associative and
Lie algebras though their definition differs from the latter only modulo a sign. In  \cite{[ee]} it's proven that there exists a rich and very interesting theory behind the JJ
algebras which deserves to be developed further mainly for three important reasons (see  \cite{[ee]} for more details). They mainly discuss the general extension (GE) problem (which is a kind of generalization of the classical Holder extension problem) for JJ algebras. Interestingly, they authors prove that a finite dimensional JJ algebras is Frobenius if and only if there exists an invariant non degenerate bilinear form (Proposition 1.8). 

 A  Frobenius algebra  is an associative
algebra equipped with a  non-degenerate invariant bilinear form. This type of algebras plays 
an important role in different areas of   mathematics and 
physics,  such as
statistical models over two-dimensional graphs \cite{[MB]} and topological quantum field theory
\cite{[JK]}. On the other hand, an antisymmetric bilinear form on an associative algebra $\mathcal A$ is an antisymmetric bilinear form on $\mathcal A$ which is a $1-$cocycle, or 
  Connes cocycle, for the Hochschid cohomology. 

 In \cite{[C.Bai5]},  C. Bai described associative analogs of Drinfeld's double constructions for Frobenius algebras and for associative algebras equipped with  non-degenerate Connes
cocycles. 
We note that there are two different types of constructions involved:
\begin{enumerate}
\item[i)] the Drinfeld's double type constructions, from a Frobenius algebra or
from an associative algebra equipped with a Connes cocyle, and
\item[ii)] the Frobenius algebra obtained from an anti-symmetric solution of the
associative Yang-Baxter equation and the non-degenerate Connes cocycle
obtained from a symmetric solution of a D-equation.
\end{enumerate}


Using the main fact that JJ algebras are Frobenius under susmentioned condition, our purpose is to proceed a double construction for JJ algebras and for pre-JJ algebras.

		We use in this paper the double construction's technique by Bai in \cite{[C.Bai5]}. The aim of this paper is to give some basics of JJ algebras and, study pre-JJ algebras specially their bimodules and matched pairs. The
double construction of symmetric (pre)JJ algebras.  Furthermore, a two dimensional classification of pre-JJ algebras 
is given with a special emphasis on the corresponding double construction.


  The paper is organized as follow. In Sec.2, after recalling some basics concepts and necessary properties on antiassociative and JJ algebras, we discuss on the   matched pairs of JJ algebras. On the other hand we discuss some basics properties of pre-JJ algebras. Sec.3 is devoted for bimodules and matched pairs of pre-JJ algebras. In Sect.4, we give results on double construction symmetric (pre)JJ algebras. A detailed survey is done for a two dimensional pre-JJ algebras in Sec.5. In Sec.6, we end with some concluding remarks.

\section{A pre-Jacobi-Jordan algebra: definition and main results}

\subsection{Representations and Matched pair of Jacobi-Jordan algebras}
 Throughout this work, we consider  $\A$  a finite dimensional  vector space over the field $ \K $ of 
 characteristic different from $2,3$ together with a bilinear product $"\cdot"$ defined as 
 $\displaystyle 
  : \A \times \A \rightarrow~\A$ such that $\; (~x~,~y~) \mapsto~x~\cdot~y.$
  \begin{definition} \cite{[OK]}
Let "$\cdot$" be a bilinear product in a vector space $\A.$ Suppose that it satisfies the following law:
\beq\label{ant}
(x\cdot y)\cdot z= -x\cdot(y\cdot z).
\eeq
Then, we call the pair $(\A, \cdot)$ an \textbf{antiassociative algebra}. 
\end{definition}
\begin{definition}\cite{[Zumano]}
An algebra $(\A, \diamond)$ over $K$ is called JJ if it is commutative:
\beq
x\diamond y=y\diamond x,
\eeq
and satisfies the Jacobi identity:
\beq
(x\diamond y)\diamond z+(z\diamond x)\diamond y+(y\diamond z)\diamond x=0
\eeq
for any $x$,$y$,$z\in \A$.
\end{definition}

\begin{theorem}\cite{[Zumano]}
Given an antiassociative algebra $(\A,\cdot)$, the new algebra $\A^{\dagger}$ with multiplication give by the "anticommutator"
\beqs
a\diamond b=\frac{1}{2}\left(a\cdot b+b\cdot a\right),
\eeqs
is a JJ algebra.
\end{theorem}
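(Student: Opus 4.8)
The plan is simply to check the two defining axioms of a JJ algebra directly for the product $a\diamond b=\tfrac12(a\cdot b+b\cdot a)$, using nothing but antiassociativity \eqref{ant}. Commutativity is immediate and costs nothing: $a\diamond b=\tfrac12(a\cdot b+b\cdot a)=\tfrac12(b\cdot a+a\cdot b)=b\diamond a$, so no use of \eqref{ant} is needed here. Hence the real content is the Jacobi identity $(x\diamond y)\diamond z+(z\diamond x)\diamond y+(y\diamond z)\diamond x=0$.

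The key idea for the Jacobi identity is to expand each of the three summands into triple $\cdot$-products and then use \eqref{ant}, in the form $u\cdot(v\cdot w)=-(u\cdot v)\cdot w$, to rewrite every such product in the left-normed shape $(u\cdot v)\cdot w$. Concretely, I would first compute
\[
4\,(x\diamond y)\diamond z=(x\cdot y)\cdot z+(y\cdot x)\cdot z+z\cdot(x\cdot y)+z\cdot(y\cdot x),
\]
and then apply antiassociativity to the last two terms, $z\cdot(x\cdot y)=-(z\cdot x)\cdot y$ and $z\cdot(y\cdot x)=-(z\cdot y)\cdot x$, obtaining
\[
4\,(x\diamond y)\diamond z=(x\cdot y)\cdot z+(y\cdot x)\cdot z-(z\cdot x)\cdot y-(z\cdot y)\cdot x.
\]
The corresponding expressions for $(z\diamond x)\diamond y$ and $(y\diamond z)\diamond x$ are then read off from this one by the cyclic relabelling $x\mapsto z\mapsto y\mapsto x$.

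Adding the three resulting lines, one checks that each of the six left-normed triple products $(a\cdot b)\cdot c$ with $\{a,b,c\}=\{x,y,z\}$ occurs exactly twice in the total, once with sign $+1$ and once with sign $-1$, so the whole sum collapses to $0$; since $\car\K\neq 2$, dividing by $4$ yields the Jacobi identity for $\diamond$. Together with commutativity this shows that $\A^{\dagger}$ is a JJ algebra. I do not expect any genuine obstacle: the only thing requiring care is the bookkeeping of the twelve triple products and the consistent use of \eqref{ant} to turn each ``mixed'' association of the form $z\cdot(x\cdot y)$ into a left-normed one, after which the cancellation is forced purely by the cyclic symmetry of the three summands.
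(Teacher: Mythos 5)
Your proof is correct. Note that the paper itself gives no proof of this theorem -- it is stated with a citation to [Zumano] -- so there is no internal argument to compare against; your direct verification is the natural one. The computation checks out: $4\,(x\diamond y)\diamond z=(x\cdot y)\cdot z+(y\cdot x)\cdot z-(z\cdot x)\cdot y-(z\cdot y)\cdot x$ after normalizing with \eqref{ant}, the two cyclic images are obtained by the substitution $x\mapsto z$, $z\mapsto y$, $y\mapsto x$ exactly as you say, and in the sum each of the six left-normed products $(a\cdot b)\cdot c$ indeed appears once with sign $+1$ and once with sign $-1$, so the total vanishes; the hypothesis $\car\K\neq 2$ justifies the division by $4$.
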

Since JJ algebras are commutative, the left and right actions of an algebra coincide, so we can speak about just modules.
\begin{definition}\cite{[Zumano]}
A vector space $V$ is a module over a JJ algebra $\A$, if there is a linear map (a representation) $\rho:\A\to End(V)$ such that
\beq
\rho(x\diamond y)(v)=-\rho(x)(\rho(y)v)-\rho(y)(\rho(x)v)
\eeq
for any $x,y\in\A$ and $v\in V$.
\end{definition}
\begin{proposition}\label{prop2}
Let $(\g,\diamond)$ be a
JJ algebra and  $(V,\rho)$ be a representation of
$\mathfrak{g}$.

The direct summand $\g\oplus V$ with a bracket defined by
\begin{equation}
(x+u)\diamond(y+w):= x\diamond y + \rho(x)(w)+\rho (y)(u)  \quad \forall
x,y\in \g \ \forall u,w\in V
\end{equation}
is  a JJ algebra.
\end{proposition}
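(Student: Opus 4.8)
The plan is to verify directly that the product defined on $\g \oplus V$ satisfies the two defining axioms of a JJ algebra: commutativity and the Jacobi identity. First I would check commutativity. Writing out $(x+u)\diamond(y+w)$ and $(y+w)\diamond(x+u)$ and comparing, the $\g$-part gives $x\diamond y = y\diamond x$ by commutativity of $\g$, and the $V$-part gives $\rho(x)(w)+\rho(y)(u)$ on both sides, so commutativity is immediate. This step is essentially free and requires no hypothesis beyond the definition.

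The substantive step is the Jacobi identity. I would take three generic elements $X = x+u$, $Y = y+w$, $Z = z+t$ with $x,y,z \in \g$ and $u,w,t \in V$, and expand the cyclic sum $(X\diamond Y)\diamond Z + (Z\diamond X)\diamond Y + (Y\diamond Z)\diamond X$. The expansion naturally splits into a component lying in $\g$ and a component lying in $V$. The $\g$-component is exactly $(x\diamond y)\diamond z + (z\diamond x)\diamond y + (y\diamond z)\diamond x$, which vanishes because $\g$ is a JJ algebra. The $V$-component collects all terms of the form $\rho(\text{something})(\text{something in }V)$; I would group these so that the coefficient of $t$ (the $V$-part of $Z$) is $\rho(x\diamond y)(t) + \rho(x)(\rho(y)t) + \rho(y)(\rho(x)t)$, which is zero by the module axiom in Definition~2.5, and similarly for the coefficients of $u$ and $w$ by symmetry of the cyclic sum. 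Collecting, the whole $V$-component vanishes, so the Jacobi identity holds on $\g \oplus V$.

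The main obstacle — though it is more bookkeeping than genuine difficulty — is organizing the $V$-component of the triple product correctly: one must be careful that when expanding, say, $(X\diamond Y)\diamond Z$, the element $X\diamond Y$ already has a nontrivial $\g$-part $x\diamond y$ and a nontrivial $V$-part $\rho(x)w + \rho(y)u$, so the outer product with $Z$ produces both $\rho(x\diamond y)(t)$ and $\rho(z)(\rho(x)w + \rho(y)u)$. Tracking all nine such contributions across the three cyclic terms and regrouping them by which of $u,w,t$ they act on is the crux; once grouped, each group is an instance of the module relation $\rho(a\diamond b)(v) = -\rho(a)(\rho(b)v) - \rho(b)(\rho(a)v)$ and hence vanishes. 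I would present the regrouping for the coefficient of $t$ in full and then remark that the coefficients of $u$ and $w$ follow by the cyclic symmetry of the computation.
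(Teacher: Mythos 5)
Your proposal is correct and follows essentially the same route as the paper: commutativity is immediate, and the Jacobi identity is verified by expanding the cyclic sum, letting the $\g$-component vanish by the Jacobi identity in $\g$, and grouping the $V$-component into three instances of the module relation $\rho(a\diamond b)(v)+\rho(a)(\rho(b)v)+\rho(b)(\rho(a)v)=0$. Your explicit regrouping by the coefficients of $u,w,t$ is in fact a cleaner organization of the same cancellation the paper carries out term by term.
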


\begin{proof}
The symmetry of the bracket is obvious. We show that the
Jacobi identity is satisfied:

Let $ x,y,z\in \g$ and $\forall u,v,w\in V.$
\begin{eqnarray*}
&&\circlearrowleft_{(x,u),(y,v),(z,w)}(x+u)\diamond((y+v)\diamond(z+w))\cr
&=&\circlearrowleft_{(x,u),(y,v),(z,w)}(x+u)\diamond(y\diamond z +\rho (y)(w)+\rho (z)(v))\\
&=&\circlearrowleft_{(x,u),(y,v),(z,w)} x\diamond(y\diamond z) +\rho (x)(\rho
(y)(w)+\rho (z)(v))+\rho (y\diamond z)(u)\\
&=&\circlearrowleft_{(x,u),(y,v),(z,w)}\rho (x(\rho
(y)(w))+\rho (x(\rho (z)(v))+\rho (y(\rho
(z)(u))\\
&+&\rho (z(\rho (y)(u))\\
  &=& \rho (x(\rho
(y)(w))+\rho (x(\rho (z)(v))+\rho (y(\rho
(z)(u))\rho (z(\rho (y)(u))\\
&+&\rho (y(\rho (z)(u))+\rho (y(\rho (x)(w))+\rho
(z(\rho (x)(v))+\rho (x(\rho (z)(v))
\\
&+&\rho (z(\rho (x)(v))+\rho (z(\rho (y)(u))+\rho
(x(\rho (y)(w))+\rho (y(\rho (x)(w))\\
&=&0
\end{eqnarray*}
where $\circlearrowleft_{(x,u),(y,v),(z,w)}$ denotes summation over
the cyclic permutation on \\
$(x,u),(y,v),(z,w)$.
\end{proof}
\begin{definition}
Let $(\g,\diamond)$ be a JJ algebra. Two representations $(V,\rho)$ and $(V',\rho')$  of $\mathfrak{g}$ are said to be isomorphic if there exists a linear map
$\phi\ :V \rightarrow V^{\prime }$ such that
$$
 \forall x\in \g, \ \ \rho ^{\prime }(x)\circ \phi=\phi\circ \rho(x).
$$
\end{definition}
\begin{proposition}
Let $(\g, \diamond)$ be a JJ algebra
and $L_{\diamond}: \g\rightarrow End(\g)$ be an operator defined for $x\in \g$ by
$L_{\diamond}(x)(y)= x\diamond y.$ Then  $(\g, L_{\diamond})$ is a representation of
$\mathfrak{g}$.
\end{proposition}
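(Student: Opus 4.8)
The statement to prove is that if $(\g, \diamond)$ is a JJ algebra, then the left-multiplication operator $L_\diamond : \g \to \End(\g)$, defined by $L_\diamond(x)(y) = x \diamond y$, gives a representation of $\g$ on the underlying space $\g$ itself. By the definition of a module over a JJ algebra recalled above, this amounts to verifying the single identity
\[
L_\diamond(x \diamond y)(z) = -L_\diamond(x)\bigl(L_\diamond(y)(z)\bigr) - L_\diamond(y)\bigl(L_\diamond(x)(z)\bigr)
\]
for all $x, y, z \in \g$. So the plan is simply to translate this identity into the $\diamond$-product and recognize it as a consequence of the Jacobi identity together with commutativity.

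First I would unwind the left-hand side: $L_\diamond(x \diamond y)(z) = (x \diamond y) \diamond z$. Then I would unwind the right-hand side: $-L_\diamond(x)(L_\diamond(y)(z)) - L_\diamond(y)(L_\diamond(x)(z)) = -x \diamond (y \diamond z) - y \diamond (x \diamond z)$. So the claim reduces to showing
\[
(x \diamond y) \diamond z = -x \diamond (y \diamond z) - y \diamond (x \diamond z).
\]
Now I would invoke the Jacobi identity $(x \diamond y)\diamond z + (z \diamond x)\diamond y + (y \diamond z)\diamond x = 0$, rearranged as $(x \diamond y)\diamond z = -(z \diamond x)\diamond y - (y \diamond z)\diamond x$. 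Using commutativity to rewrite $(z \diamond x)\diamond y = y \diamond (x \diamond z)$ and $(y \diamond z)\diamond x = x \diamond (y \diamond z)$, this becomes exactly $(x \diamond y)\diamond z = -y \diamond (x \diamond z) - x \diamond (y \diamond z)$, which is the desired identity. This closes the argument.

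There is essentially no obstacle here: the proposition is a direct reformulation of the Jacobi identity, with commutativity used only to move factors past each other. The only point requiring any care is keeping the roles of $x$, $y$, $z$ straight when matching the cyclic terms of the Jacobi identity against the two terms on the right-hand side of the module axiom, and confirming that the signs are consistent (the module axiom carries an overall minus sign, matching the minus signs produced by the rearranged Jacobi identity). One could also remark that this is the JJ analogue of the adjoint representation of a Lie algebra, the sign discrepancy in the module axiom being precisely the one that distinguishes JJ algebras from Lie algebras.
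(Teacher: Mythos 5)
Your proof is correct and follows essentially the same route as the paper: both reduce the module axiom $L_{\diamond}(x\diamond y)(z)=-L_{\diamond}(x)(L_{\diamond}(y)(z))-L_{\diamond}(y)(L_{\diamond}(x)(z))$ to the Jacobi identity, with commutativity used to reorder the factors. Your version merely makes explicit the commutativity steps that the paper leaves implicit.
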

\begin{proof}
Since $\mathfrak{g}$ is JJ algebra,
 the Jacobi condition on $x,y,z\in \g$ is
 $$ x\diamond(y\diamond z) + y\diamond (z\diamond x)+ z\diamond(x\diamond y)=0
 $$
 and may be written
  $$L_{\diamond}(x\diamond y)(z)=-L_{\diamond}(x)(L_{\diamond}(y)(z))-L_{\diamond}(y)(L_{\diamond}(x)(z))
 $$
 Then the operator $\ad$ satisfies
 $$L_{\diamond}[x,y]=-L_{\diamond}(x)\circ L_{\diamond}(y)-L_{\diamond}(y)\circ L_{\diamond}(x).
 $$
 Therefore, it determines a representation of the JJ algebra
 $\mathfrak{g}$.
\end{proof}
Let $(\g, \diamond)$ be a
JJ algebra and  $(V,\rho)$ be a representation of
$\mathfrak{g}$. Let $V^*$ be the dual vector space of $V$.
We define a linear map 
\beq\label{dual1}
\rho^*: g & \longrightarrow & \mathfrak{gl}(V^*)  \cr
  x  & \longmapsto & \rho^*_x:
      \begin{array}{llll}
 V^* &\longrightarrow & V^* \\ 
  u^* & \longmapsto & \rho^*_x u^*: 
      \begin{array}{llll}
V  &\longrightarrow&  \K \cr
v  &\longmapsto& \left< \rho^*_xu^{*}, v \right>
 := \left<  u^{*}, \rho_x v\right>. 
      \end{array}
     \end{array}
 \eeq 

Let $f\in V^*$, $x,y\in \g$ and $u\in V$.
 We have
\begin{eqnarray*}
\left< \rho^*(x\diamond y) f, u\right>&=&
\left<  f,\rho(x\diamond y) u\right>\\
\ &=&\left<  f,-\rho(x)\rho(y) u-\rho(y)\rho(x) u\right>\\
\ &=&\left<  -\rho^*(x)\rho^*(y) f-\rho^*(y)\rho^*(x)f, u\right>.
\end{eqnarray*}
Therefore,
We have the following proposition:

\begin{proposition}
Let  $(\g, \diamond)$ be a
JJ algebra and  $(V,\rho)$ be a representation of
$\mathfrak{g}$, where $V$ is a finite dimensional vector space.
The following conditions are equivalent:
\begin{enumerate}
\item $(V,\rho)$ is a bimodule of $\g$.
\item $(V^{*},\rho^*)$ is a bimodule of
 $\g .$  
\end{enumerate}
\end{proposition}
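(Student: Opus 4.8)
The plan is to deduce the equivalence from the single computation carried out immediately before the statement, together with the canonical duality $V\cong V^{**}$ that is available because $V$ is finite dimensional. Since a JJ algebra is commutative, left and right actions coincide, so the assertion ``$(V,\rho)$ is a bimodule of $\g$'' means precisely that $\rho(x\diamond y)=-\rho(x)\rho(y)-\rho(y)\rho(x)$ for all $x,y\in\g$; this is the only identity in play, and the proof amounts to showing that it passes to the dual and, thanks to finite dimensionality, back again.

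First, for $(1)\Rightarrow(2)$, assume $\rho(x\diamond y)=-\rho(x)\rho(y)-\rho(y)\rho(x)$. Then, for every $f\in V^{*}$, $x,y\in\g$ and $u\in V$, the chain of equalities displayed just before the proposition gives
\[
\langle\rho^{*}(x\diamond y)f,u\rangle=\langle f,\rho(x\diamond y)u\rangle=\langle f,-\rho(x)\rho(y)u-\rho(y)\rho(x)u\rangle=\langle-\rho^{*}(x)\rho^{*}(y)f-\rho^{*}(y)\rho^{*}(x)f,u\rangle,
\]
using only the definition \eqref{dual1} of $\rho^{*}$ and the adjunction $\langle\rho^{*}_{x}f,v\rangle=\langle f,\rho_{x}v\rangle$. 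As this holds for all $u\in V$ and $V^{*}$ separates points of $V$, we conclude $\rho^{*}(x\diamond y)=-\rho^{*}(x)\rho^{*}(y)-\rho^{*}(y)\rho^{*}(x)$, that is, $(V^{*},\rho^{*})$ is a bimodule of $\g$.

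For $(2)\Rightarrow(1)$, apply the implication just proved with $(V^{*},\rho^{*})$ in the role of $(V,\rho)$: this yields that $(V^{**},(\rho^{*})^{*})$ is a bimodule of $\g$. Now invoke the finite-dimensionality of $V$. The evaluation map $\iota\colon V\to V^{**}$, $\iota(v)(f)=\langle f,v\rangle$, is a linear isomorphism, and a direct check from \eqref{dual1} shows $(\rho^{*})^{*}(x)\circ\iota=\iota\circ\rho(x)$ for all $x\in\g$, so $\iota$ is an isomorphism of representations in the sense of the definition above. Transporting the bimodule identity satisfied by $(\rho^{*})^{*}$ back along $\iota$ gives $\rho(x\diamond y)=-\rho(x)\rho(y)-\rho(y)\rho(x)$, so $(V,\rho)$ is a bimodule of $\g$.

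The argument is entirely formal; the only step needing a moment's care is the identification $(\rho^{*})^{*}=\rho$ under $\iota$, i.e.\ the compatibility of the canonical isomorphism $V\cong V^{**}$ with the actions, and the remark that it is precisely the finite-dimensionality hypothesis that makes $\iota$ surjective — without it, the implication $(2)\Rightarrow(1)$ need not hold.
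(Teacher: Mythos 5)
Your argument is correct, and the forward direction $(1)\Rightarrow(2)$ is word-for-word the computation the paper displays just before the proposition (that displayed chain of equalities is, in effect, the paper's entire proof; the converse is left implicit there, presumably because each step of the chain is reversible). Where you differ is in making $(2)\Rightarrow(1)$ explicit via the double dual: you apply the forward implication to $(V^{*},\rho^{*})$ and transport the identity back along the evaluation isomorphism $\iota\colon V\to V^{**}$, which is a clean and complete way to close the loop that the paper skips. One small correction to your closing remark: while your particular route through $V^{**}$ genuinely needs $\iota$ to be surjective, the implication $(2)\Rightarrow(1)$ itself does not require finite-dimensionality. Setting $A=\rho(x\diamond y)+\rho(x)\rho(y)+\rho(y)\rho(x)$, hypothesis $(2)$ says exactly that the transpose $A^{*}$ vanishes on $V^{*}$, whence $\langle f,Av\rangle=\langle A^{*}f,v\rangle=0$ for all $f\in V^{*}$, $v\in V$; since $V^{*}$ always separates points of $V$, this forces $A=0$. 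So finite-dimensionality is a convenience of your method, not a necessity of the statement.
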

\begin{corollary}
Let $(\g,\diamond)$ be a
JJ algebra and $(\g, L_{\diamond})$ be the adjoint representation of
 $\mathfrak{g}$, where $L_{\diamond}: \g\rightarrow End(\g)$.
 We set $L_{\diamond}: \g\rightarrow End(\g^*)$ and
 \beq\label{dual1}
L^*_{\diamond}: g & \longrightarrow & \mathfrak{gl}(V^*)  \cr
  x  & \longmapsto & L^*_{\diamond}(x):
      \begin{array}{llll}
 V^* &\longrightarrow & V^* \\ 
  u^* & \longmapsto & L^*_{\diamond}(x) u^*: 
      \begin{array}{llll}
V  &\longrightarrow&  \K \cr
f  &\longmapsto& \left< L^*_{\diamond}(x)u^{*}, v \right>
 := \left<  u^{*}, L_{\diamond}(x)v\right>. 
      \end{array}
     \end{array}
 \eeq 

 Then $(\g^*,L^*_{\diamond})$
 is a representation of $\mathfrak{g}$.
\end{corollary}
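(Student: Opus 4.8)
The plan is to obtain this corollary as the special case of the preceding Proposition (on dualizing bimodules/representations) in which the representation is the adjoint one, $(V,\rho) = (\g, L_{\diamond})$. Since we have already proved that $(\g, L_{\diamond})$ is a representation of $\g$, what remains is to check that its transpose $L^*_{\diamond}$ satisfies the defining identity of a JJ-representation, that is
\[
L^*_{\diamond}(x\diamond y) = -\,L^*_{\diamond}(x)\circ L^*_{\diamond}(y) \;-\; L^*_{\diamond}(y)\circ L^*_{\diamond}(x), \qquad \forall\, x,y\in\g .
\]

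To verify this I would fix $x,y\in\g$, an arbitrary $u^{*}\in\g^{*}$ and an arbitrary $v\in\g$, and test the desired operator identity against the pairing $\langle\,\cdot\,,v\rangle$. Applying the definition $\langle L^*_{\diamond}(x)u^{*},v\rangle = \langle u^{*}, L_{\diamond}(x)v\rangle$ twice gives $\langle L^*_{\diamond}(x)L^*_{\diamond}(y)u^{*},v\rangle = \langle u^{*}, L_{\diamond}(y)L_{\diamond}(x)v\rangle$, with the order of $x$ and $y$ reversed; this reversal is immaterial because the right-hand side of the identity above is symmetric in $x,y$. Summing the two terms and then using the representation identity already established for $L_{\diamond}$ (the previous Proposition), the right-hand slot collapses to $L_{\diamond}(x\diamond y)v$, so
\[
\langle -L^*_{\diamond}(x)L^*_{\diamond}(y)u^{*} - L^*_{\diamond}(y)L^*_{\diamond}(x)u^{*},\, v\rangle = \langle u^{*}, L_{\diamond}(x\diamond y)v\rangle = \langle L^*_{\diamond}(x\diamond y)u^{*},\, v\rangle .
\]
Since this holds for every $v\in\g$ and the canonical pairing $\g^{*}\times\g\to\K$ is non-degenerate, the two covectors coincide; as $u^{*}$ was arbitrary, the operator identity follows, proving that $(\g^{*}, L^*_{\diamond})$ is a representation of $\g$.

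I do not expect any real obstacle here: the statement is essentially the adjoint instance of the general duality Proposition, and the only thing to watch is the transposition rule $(AB)^{t} = B^{t}A^{t}$, whose order-swap is harmless thanks to the commutativity/symmetry built into the JJ-representation axiom. One could equally well finish in a single line by quoting that Proposition with $(V,\rho)=(\g, L_{\diamond})$; the computation sketched above is merely its unwound form.
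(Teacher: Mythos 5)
Your proof is correct and follows essentially the same route as the paper: the corollary is obtained by specializing the duality proposition to $(V,\rho)=(\g,L_{\diamond})$, and your unwound pairing computation $\langle L^*_{\diamond}(x\diamond y)u^*,v\rangle=\langle u^*,L_{\diamond}(x\diamond y)v\rangle=\langle u^*,-L_{\diamond}(x)L_{\diamond}(y)v-L_{\diamond}(y)L_{\diamond}(x)v\rangle$ is exactly the computation the paper displays just before that proposition. Your remark that the order reversal under transposition is absorbed by the symmetry of the right-hand side is the same observation the paper uses implicitly.
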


 \begin{theorem}
\label{dfnmatched} \label{matchJacobyJordan}
Let $(\displaystyle \mathcal{G}, \diamond)$ and $(\displaystyle \mathcal{H}, \bullet)$ be two JJ algebras and let  
$\displaystyle \mu: \mathcal{H} \rightarrow \mathfrak{gl}(\mathcal{G})$ and $\displaystyle \rho: \mathcal{G}\rightarrow \mathfrak{gl}( \mathcal{H})$
be two JJ algebra representations. Then, $\displaystyle(\mathcal{G}, \mathcal{H}, \rho, \mu)$ is called a matched pair of the JJ algebras
$\displaystyle \mathcal{G}$ and $\displaystyle \mathcal{H},$ 
denoted by $\displaystyle \mathcal{H}\bowtie_{\mu}^{-1,\rho}\mathcal{G}$ if and only if $\mu $ and $\rho$ satisfy:
$\displaystyle \mbox{ for all } x, y \in \mathcal{G}, a, b \in~\mathcal{H},$ 
\begin{eqnarray}\label{eqt1}
\rho(x)(a\bullet b)+ \rho(x)a\bullet b + a\bullet\rho(x)b +\rho(\mu(a)x)b+\rho(\mu(b)x)a=0,
\end{eqnarray}
\begin{eqnarray}\label{eqt2}
\mu(a)(x\diamond y)+ \mu(a)x\diamond y + x\diamond\mu(a)y + \mu(\rho(x)a)y+\mu(\rho(y)a)x=0.
\end{eqnarray}
In this case,  $\displaystyle (\mathcal{G}\oplus \mathcal{H}, \ast)$ defines a JJ algebra with respect to the product $\ast$ satisfying: 
\beq
\displaystyle (x+a)\ast(y+b)= x\diamond y + \mu(a)y+\mu(b)x + a\bullet b + \rho(x)b+\rho(y)a.
\eeq
\end{theorem}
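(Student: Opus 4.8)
The plan is to check directly that $\ast$ endows $\mathcal{G}\oplus\mathcal{H}$ with the two defining properties of a JJ algebra: commutativity and the Jacobi identity. Commutativity is immediate: writing $X=x+a$, $Y=y+b$ with $x,y\in\mathcal{G}$, $a,b\in\mathcal{H}$, the expression $X\ast Y = x\diamond y + \mu(a)y+\mu(b)x + a\bullet b + \rho(x)b+\rho(y)a$ is visibly invariant under $(x,a)\leftrightarrow(y,b)$ because $\diamond$ and $\bullet$ are commutative. So the whole content is the Jacobi identity. Here I would exploit the symmetry of the construction: interchanging $(\mathcal{G},\diamond,\mu)\leftrightarrow(\mathcal{H},\bullet,\rho)$ and $(\ref{eqt1})\leftrightarrow(\ref{eqt2})$ leaves the setup invariant, so it suffices to show that the $\mathcal{G}$-valued component of $\circlearrowleft_{(x,a),(y,b),(z,c)}\,(X\ast Y)\ast Z$ vanishes for all $x,y,z\in\mathcal{G}$, $a,b,c\in\mathcal{H}$; the $\mathcal{H}$-valued component then follows by the same argument with the roles swapped.

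For the $\mathcal{G}$-component I would expand $(X\ast Y)\ast Z$ using $X\ast Y = u_1+v_1$ with $u_1=x\diamond y+\mu(a)y+\mu(b)x\in\mathcal{G}$ and $v_1=a\bullet b+\rho(x)b+\rho(y)a\in\mathcal{H}$, so that the $\mathcal{G}$-part of $(X\ast Y)\ast Z$ is $u_1\diamond z+\mu(v_1)z+\mu(c)u_1$, and similarly for the two cyclic images. Then I would bin the resulting monomials by their $\mathcal{H}$-degree, i.e. by how many of $a,b,c$ occur in them; inspection of the formula for $\ast$ shows only degrees $0,1,2$ arise. The degree-$0$ part is $\circlearrowleft_{x,y,z}(x\diamond y)\diamond z$, which is $0$ by the Jacobi identity in $(\mathcal{G},\diamond)$. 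In the degree-$1$ part, the terms containing $a$ alone collect (after using commutativity of $\diamond$ to rewrite $(\mu(a)z)\diamond y = y\diamond\mu(a)z$) to $\mu(a)(y\diamond z)+\mu(a)y\diamond z+y\diamond\mu(a)z+\mu(\rho(y)a)z+\mu(\rho(z)a)y$, which is exactly the left-hand side of $(\ref{eqt2})$ with $(x,y)$ replaced by $(y,z)$, hence $0$; the $b$- and $c$-only terms vanish by the cyclic symmetry of the sum. In the degree-$2$ part, the terms containing both $a$ and $b$ collect to $\bigl(\mu(a\bullet b)+\mu(a)\mu(b)+\mu(b)\mu(a)\bigr)z$, which is $0$ because $\mu$ is a representation of the JJ algebra $\mathcal{H}$ (so $\mu(a\bullet b)=-\mu(a)\mu(b)-\mu(b)\mu(a)$); the $(b,c)$- and $(c,a)$-terms are handled cyclically. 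Summing, the $\mathcal{G}$-component is $0$, and by the symmetry above so is the $\mathcal{H}$-component, which completes the proof.

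The main obstacle is bookkeeping rather than anything conceptual: the expansion produces roughly twenty terms in each component, and one must carefully track which of $a,b,c$ is paired with which of $x,y,z$ so that the degree-$1$ monomials assemble into $(\ref{eqt2})$ in precisely the right index configuration. Two points worth isolating up front are that commutativity of $\diamond$ is used to match the terms $(\mu(a)z)\diamond y$ against the $y\diamond\mu(a)z$ appearing in $(\ref{eqt2})$, and that the hypotheses ``$\mu$ and $\rho$ are JJ-algebra representations'' are genuinely needed — beyond the matched-pair conditions $(\ref{eqt1})$–$(\ref{eqt2})$ — since exactly the identities $\mu(a\bullet b)=-\mu(a)\mu(b)-\mu(b)\mu(a)$ and $\rho(x\diamond y)=-\rho(x)\rho(y)-\rho(y)\rho(x)$ are what annihilate the degree-$2$ contributions.
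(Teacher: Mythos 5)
Your proposal is correct and follows essentially the same route as the paper: a direct expansion of the cyclic sum $\circlearrowleft (X\ast Y)\ast Z$, with the terms grouping into the Jacobi identities of $\mathcal{G}$ and $\mathcal{H}$, instances of \eqref{eqt1}--\eqref{eqt2}, and the representation identities $\mu(a\bullet b)=-\mu(a)\mu(b)-\mu(b)\mu(a)$, $\rho(x\diamond y)=-\rho(x)\rho(y)-\rho(y)\rho(x)$. Your bookkeeping by $\mathcal{H}$-degree and the use of the $(\mathcal{G},\diamond,\mu)\leftrightarrow(\mathcal{H},\bullet,\rho)$ symmetry to treat only one component are just a cleaner organization of the same computation.
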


\textbf{Proof:}
We have:
\beq
&&((x+a)\ast(y+b))\ast(z+c)+((y+b)\ast(z+c))\ast(x+a) + ((z+c)\ast(x+a))\ast(y+b)\cr 
 &=& \{(x\diamond y)\diamond z + (y\diamond z)\diamond x + (z\diamond x)\diamond y\}
+\{(a\bullet b)\bullet c + (b\bullet c)\bullet a + (c\bullet a)\bullet b\} \cr
&+&\{\mu(c)(x\diamond y)+ (\mu(c)x\diamond y)+ x\diamond\mu(c)y + \mu(\rho(x)c)y 
+\mu(\rho(y)c)x \} \cr 
&+&\{\rho(z)(a\bullet b) + \rho(z)a\bullet b + a\bullet\rho(z)b +\rho(\mu(a)z)b+\rho(\mu(b)z)a\}\cr 
& +&\{\mu(a)(y\diamond z)+ \mu(a)y\diamond z + y\diamond\mu(a)z + \mu(\rho(y)a)z+\mu(\rho(z)a)y\}\cr 
& +&\{\rho(x)(b\bullet c) + \rho(x)b\bullet c + b\bullet\rho(x)c + \rho(\mu(b)x)c+ \rho(\mu(c)x)b\} \cr
&+&\{\mu(b)(z\diamond x)+ \mu(b)z\diamond x + z\diamond\mu(b)x + \mu(\rho(z)b)x+\mu(\rho(x)b)z\} \cr 
&+&\{\rho(y)(c\bullet a) + \rho(y)c\bullet a + c\bullet\rho(y)a + \rho(\mu(c)y)a+ \rho(\mu(a)y)c\} \cr
&+&\{\rho(x\diamond y)c + \rho(x)(\rho(y)c)+ \rho(y)(\rho(x)c)\} 
+\{\rho(y\diamond z)a + \rho(y)(\rho(z)a)+\rho(z)(\rho(y)a)\} \cr
&+&\{\mu(a\bullet b)z+\mu(a)(\mu(b)z)+\mu(b)(\mu(a)z)\} +\{\mu(b\bullet c)x+ \mu(b)(\mu(c)x)+\mu(c)(\mu(b)x)\} \cr
&+&
+\{\mu(c\bullet a)+\mu(c)(\mu(a)y)+\mu(a)(\mu(c)y)\} +\{\rho(z\diamond x)b+ \rho(z)(\rho(x)b)+\rho(x)(\rho(z)b)\}.
\eeq
Thus setting in this previous computation $\displaystyle \mbox{ for all } x, y \in \mathcal{G}, a, b \in~\mathcal{H},$ 
\beqs
\rho(x)(a\bullet b) + \rho(x)a\bullet b + a\bullet \rho(x)b + \rho(\mu(a)x)b+\rho(\mu(b)x)a=0,
\eeqs
\beqs
\mu(a)(x\diamond y)+ \mu(a)x\diamond y + x\diamond \mu(a)y + \mu(\rho(x)a)y+\mu(\rho(y)a)x=0,
\eeqs
and using the linearity of the representations $\mu$ and $\rho$, the Jacobi identity
\beqs
((x+a)\ast(y+b))\ast(z+c)+ ((y+b)\ast (z+c))\ast(x+a) + ((z+c)\ast (x+a))\ast (y+b)=0
\eeqs
is satisfied. This end the proof.

$\cqfd$
 \subsection{Definition and main properties of pre-Jacobi-Jordan algebras}
 \begin{definition}\label{def}
 $(\A, \cdot),$ (or simply $\A$), is said to be a left pre-JJ-algebra 
  if $\forall \, x, y, z \, \in \A,$  the antiassociator of the bilinear product $\cdot$ defined by 
  $(~x,~y,~z~)_{-1}:= (~x\cdot y~)\cdot z +x\cdot~(~y\cdot z~),$ is symmetric in $x$ and $y$, i.e.,
 \begin{eqnarray}\label{CSA}
 (~x,~y,~z~)_{-1} = -(~y,~x,~z~)_{-1}.
 \end{eqnarray}
 \end{definition} 
As matter of  notation simplification, we will denote $\displaystyle x \cdot y$ by $xy$ if not any confusion.
\begin{proposition}
Any antiassociative algebra is a left pre-JJ-algebra, and
\beq\label{oper}
\mu\circ(\mu\otimes id)+\mu\circ(id\otimes\mu)+\mu\circ((\mu\circ\tau)\otimes id)+\mu(id\otimes (\mu\circ\tau))=0,
\eeq
where $\mu $ is the multiplication operator ant $\tau$ is such that $\tau(x\otimes y)=y\otimes x$.
\end{proposition}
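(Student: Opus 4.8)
The plan is to reduce both assertions to the single defining relation \eqref{ant} of an antiassociative algebra, namely $(x\cdot y)\cdot z=-x\cdot(y\cdot z)$, applied elementwise, and then to observe that an elementwise identity on $\A^{\otimes 3}$ automatically upgrades to an identity of multilinear operators.

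First I would show that $(\A,\cdot)$ is a left pre-JJ algebra. Fix $x,y,z\in\A$. By \eqref{ant} the antiassociator of Definition \ref{def} satisfies
\[
(x,y,z)_{-1}=(x\cdot y)\cdot z+x\cdot(y\cdot z)=-x\cdot(y\cdot z)+x\cdot(y\cdot z)=0 .
\]
Hence $(x,y,z)_{-1}=0=-(y,x,z)_{-1}$, which is exactly condition \eqref{CSA}, so $\A$ is a left pre-JJ algebra. (In fact, for an antiassociative algebra the antiassociator vanishes identically, so the pre-JJ symmetry holds in the strongest possible way.)

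Next I would establish the operadic identity \eqref{oper}. Both sides are $\K$-linear maps $\A^{\otimes 3}\to\A$, so it suffices to check that the left-hand side annihilates every decomposable tensor $x\otimes y\otimes z$, since these span $\A^{\otimes 3}$. Using $\mu(a\otimes b)=a\cdot b$ and $\tau(a\otimes b)=b\otimes a$, the summand $\mu\circ(\mu\otimes\id)$ contributes $(x\cdot y)\cdot z$, the summand $\mu\circ(\id\otimes\mu)$ contributes $x\cdot(y\cdot z)$, and the two summands built from $\mu\circ\tau$ contribute the corresponding products with the first two slots transposed. Collecting terms, the left-hand side of \eqref{oper} evaluated on $x\otimes y\otimes z$ equals $(x,y,z)_{-1}+(y,x,z)_{-1}$, which is $0+0=0$ by the previous paragraph. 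Being $\K$-linear and zero on a spanning set, the operator is identically zero. (One may also note that this computation is the operadic reformulation of \eqref{CSA} itself, and thus holds for every left pre-JJ algebra; the proposition records the antiassociative case.)

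There is essentially no obstacle here: the only point needing care is the bookkeeping that matches each operadic summand $\mu\circ(\mu\otimes\id)$, $\mu\circ(\id\otimes\mu)$, $\mu\circ((\mu\circ\tau)\otimes\id)$, $\mu\circ(\id\otimes(\mu\circ\tau))$ to the correct elementwise product — in particular keeping track of which tensor factors $\tau$ transposes — together with the elementary remark that a linear map vanishing on all decomposable tensors is the zero map. Everything then follows from \eqref{ant} used at most twice, uniformly in $\dim\A$ and with no choice of basis.
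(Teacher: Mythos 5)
Your proof of the first assertion is correct and coincides with the paper's: antiassociativity forces the antiassociator to vanish identically, so $(x,y,z)_{-1}=0=-(y,x,z)_{-1}$ and \eqref{CSA} holds in the strongest form. Your strategy for the second assertion (evaluate the operator identity on decomposable tensors, which span $\A^{\otimes 3}$) is also the same as the paper's implicit approach.

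However, the bookkeeping step that you yourself single out as the only delicate point is exactly where the argument breaks. The fourth summand satisfies $\mu\circ(\id\otimes(\mu\circ\tau))(x\otimes y\otimes z)=\mu\bigl(x\otimes\mu(z\otimes y)\bigr)=x\cdot(z\cdot y)$: it transposes the \emph{last} two slots, not the first two. Hence the left-hand side of \eqref{oper} at $x\otimes y\otimes z$ is $(xy)z+x(yz)+(yx)z+x(zy)$, which is \emph{not} $(x,y,z)_{-1}+(y,x,z)_{-1}=(xy)z+x(yz)+(yx)z+y(xz)$. Using \eqref{ant} it reduces to $(yx)z-(xz)y$, and this is nonzero in general: in the free antiassociative algebra on two generators $a,b$ (whose cubic component of multidegree $(2,1)$ has basis the left-normed monomials $(aa)b,(ab)a,(ba)a$), taking $x=a$, $y=a$, $z=b$ gives $(aa)b-(ab)a\neq 0$. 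So \eqref{oper} as printed is not an identity of antiassociative algebras, and your verification of it does not go through; the correct operadic transcription of $y\cdot(x\cdot z)$ is $\mu\circ(\id\otimes\mu)\circ(\tau\otimes\id)$, and with the last summand of \eqref{oper} so replaced, your computation (and the elementwise identity $(xy)z+x(yz)+(yx)z+y(xz)=0$) is correct. To be fair, the paper's own proof commits the same slip --- its penultimate display writes the fourth term as $\mu(y\otimes(\mu\circ\tau)z\otimes x)$, which is not the evaluation of $\mu\circ(\id\otimes(\mu\circ\tau))$ at $x\otimes y\otimes z$ --- so the defect lies in the statement of \eqref{oper} as much as in the proofs; but as written, your claim that the four summands sum to $(x,y,z)_{-1}+(y,x,z)_{-1}$ is false.
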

 \begin{proof}
Let $(\A, \cdot)$ be an antiassociative algebra. We have $\forall x, y,z\in \A$
\beqs
&&(x\cdot y)\cdot z= -x\cdot(y\cdot z)\cr&\Leftrightarrow &(x,y,z)_{-1}=0=-(y,x,z)_{-1}.
\eeqs
Thus $(x,y,z)_{-1}+(y,x,z)_{-1}=0$, which implies that 
\beqs
(xy)z+x(yz)+(yx)z+y(xz)=0.
\eeqs
 Supposing  $\mu$ be the bilinear product on $\A$, then the previous relation can be written 
 as
 \beq\label{eq_CSA}
 &&\mu(\mu(x\otimes y)\otimes z)+\mu(x\otimes\mu(y\otimes z))+\mu((\mu\circ\tau)(x\otimes y)\otimes z)+\mu(y\otimes(\mu\circ\tau)z\otimes x)=0\cr
 &\Leftrightarrow&
 \mu\circ(\mu\otimes id)+\mu\circ(id\otimes\mu)+\mu\circ((\mu\circ\tau)\otimes id)+\mu(id\otimes (\mu\circ\tau))=0
 \eeq
 \end{proof}
 \begin{definition}
An algebra $(\A, \cdot)$ over $\K$ with the bilinear product given by 
$(x,y)\mapsto x\cdot y$ is called right pre-JJ algebra if the associator associated 
to the bilinear product on $\A$ is symmetric in right, i.e.  for all  $x,y,z\in \A$
\beq\label{eq_right_sym}
(x,y,z)_{-1}=-(x,z,y)_{-1}.
\eeq
\end{definition}
\begin{proposition}\label{thm_equiv_left_right}
The opposite algebra of a left pre-JJ algebra is a right pre-JJ algebra. 
\end{proposition}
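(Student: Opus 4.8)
The plan is to work directly from the definitions and translate the left-symmetry condition \eqref{CSA} into the right-symmetry condition \eqref{eq_right_sym} under passage to the opposite product. First I would fix notation: let $(\A,\cdot)$ be a left pre-JJ algebra, and let $\ast$ denote the opposite product, $x\ast y := y\cdot x$. Write $(x,y,z)_{-1}^{\cdot}$ and $(x,y,z)_{-1}^{\ast}$ for the antiassociators of the two products, so that by Definition \ref{def} we have $(x,y,z)_{-1}^{\cdot} := (x\cdot y)\cdot z + x\cdot(y\cdot z)$ and similarly $(x,y,z)_{-1}^{\ast} := (x\ast y)\ast z + x\ast(y\ast z)$.

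The key computation is to expand $(x,y,z)_{-1}^{\ast}$ in terms of the original product. Since $x\ast y = y\cdot x$, one gets $(x\ast y)\ast z = z\cdot(y\cdot x)$ and $x\ast(y\ast z) = (z\cdot y)\cdot x$, hence
\begin{eqnarray*}
(x,y,z)_{-1}^{\ast} = (z\cdot y)\cdot x + z\cdot(y\cdot x) = (z,y,x)_{-1}^{\cdot}.
\end{eqnarray*}
So the antiassociator of the opposite product is obtained from that of the original by reversing the order of all three arguments. Now I would apply the hypothesis \eqref{CSA}, which says $(a,b,c)_{-1}^{\cdot}$ is antisymmetric in its first two slots: $(z,y,x)_{-1}^{\cdot} = -(y,z,x)_{-1}^{\cdot}$. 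Translating the right-hand side back via the same identity, $(y,z,x)_{-1}^{\cdot} = (x,z,y)_{-1}^{\ast}$, we obtain $(x,y,z)_{-1}^{\ast} = -(x,z,y)_{-1}^{\ast}$, which is exactly \eqref{eq_right_sym} for $(\A,\ast)$. Therefore $(\A,\ast)$ is a right pre-JJ algebra.

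There is no real obstacle here; the only thing to be careful about is bookkeeping of the argument order under the order-reversal induced by $\tau$, and making sure the antisymmetry \eqref{CSA} is applied in the correct pair of slots (the first two, not an arbitrary pair). One may optionally phrase the whole argument operadically using the relation $\mu^{\ast} = \mu\circ\tau$ and the identity \eqref{oper}-style rewriting, but the elementwise version above is the cleanest and suffices.
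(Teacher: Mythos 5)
Your proof is correct and follows essentially the same route as the paper: expand the antiassociator of the opposite product to see that it equals the order-reversed antiassociator of the original, apply the left-symmetry \eqref{CSA} in the first two slots, and translate back. The bookkeeping of argument order is handled correctly throughout.
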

\textbf{Proof:}

Suppose that $(\A, \cdot)$ is a pre-JJ algebra and  $(\A, \circ )$ the opposite algebra 
of the algebra $\A$. For any $x, y, z\in \A$ we have:
\beqs
(x,y,z)_{-1,\circ}&=&(x\circ y)\circ z+x\circ (y\circ z)
=z\cdot (y\cdot x)+(z\cdot y)\cdot x
= (z, y,x)_{-1}=-(y, z, x)_{-1} \cr &=& 
-y\cdot (z\cdot x)-(y\cdot z)\cdot x=-(x\circ z)\circ y-x\circ (z\circ y)=-(x, z,y)_{-1,\circ}.
\eeqs
Therefore, for all $x, y,z\in \A$,  $(x,y,z)_{-1,\circ}=-(x ,z,y)_{-1,\circ}$.  
$\cqfd$
 
 In the following, left pre-JJ algebras and right pre-JJ algebras are equivalent. Any left(right) pre-JJ algebra will be right(left) pre-JJ algebra under new multiplication $(x,y)\to y\cdot x$. Thus for the rest of this paper, without any further clarification, left pre-JJ algebra are called pre-JJ algebra. In addition,  to have an ease in manipulations, we replace $\diamond$ by $"[.,.]".$

 Considering the representations of the  left $L$ and right $R$ multiplication operations:
 \begin{eqnarray}
 L: \A & \longrightarrow & \mathfrak{gl}(\A)  \cr
  x  & \longmapsto & L_x:
  \begin{array}{ccc}
 \A &\longrightarrow & \A \cr 
  y & \longmapsto & x \cdot y, 
   \end{array}
\end{eqnarray}
\begin{eqnarray}
    R: \A & \longrightarrow & \mathfrak{gl}(\A)  \cr
     x  & \longmapsto & R_x:
     \begin{array}{ccc}
    \A &\longrightarrow & \A \cr 
     y & \longmapsto & y \cdot x,
      \end{array}
 \end{eqnarray}
 we infer the adjoint representation  $\ad: = L+R$ of the sub-adjacent JJ algebra  of a pre-JJ algebra $\A$  as follows:
  \begin{eqnarray}
    \ad: \A & \longrightarrow & \mathfrak{gl}(\A)  \cr
      x  & \longmapsto & \ad_x:
      \begin{array}{ccc}
     \A &\longrightarrow & \A \cr 
      y & \longmapsto & \ad_x(y),
       \end{array}
    \end{eqnarray}
such that  
  $\displaystyle \forall \; x, y \in \A, \ad_x(y):=(L_x+R_x)(y).$ 
\begin{proposition}\label{one}
Let $(\A, \cdot )$ be a pre-JJ algebra. For any $x,y\in \A$, the following relations are satisfied:
\begin{itemize}
\item The anticommutator associated to the bilinear product $"\cdot"$ given by
$[x,y]=x\cdot y+y\cdot x$ defines a JJ algebra structure on $\A$.
\item
The left multiplication operator gives a representation of the JJ algebra, that is 
\beqs
L_{[x,y]}=-[L_x, L_y],
\eeqs
and the following relation is also satisfied
\beqs
[L_x, R_y]=-R_{x\cdot y}-R_yR_x,
\eeqs
where the linear map $R$ is the right multiplication operator associated to the bilinear product $"\cdot"$ on 
$\A$.
\item $[L_x,R_y]=-[R_x,L_y]$
\item $L_{xy}+L_xL_y=-L_{yx}-L_yL_x$,
\item $ad=L+R$ a linear representation of the sub-adjacent JJ algebra of $(\A, \cdot )$ and,
\beqs
[ad_x,ad_y]=ad_{_{[x,y]}}.
\eeqs
\end{itemize}
\end{proposition}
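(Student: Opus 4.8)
The plan is to deduce every item from the single structural identity in Definition~\ref{def}. Writing $xy$ for $x\cdot y$ and unfolding $(x,y,z)_{-1}=(xy)z+x(yz)$, the relation \eqref{CSA} is the \emph{master identity}
\[
(xy)z+x(yz)+(yx)z+y(xz)=0\qquad(x,y,z\in\A).
\]
I will use throughout that, as elsewhere in the paper, the bracket of two operators means the \emph{anticommutator} $[P,Q]:=PQ+QP$; this is the convention under which ``$\rho$ is a JJ-representation'' reads $\rho([x,y])=-[\rho(x),\rho(y)]$.

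The first thing I would do is read the four summands of the master identity as linear maps applied to the last slot $z$, via $(uv)z=L_{uv}(z)$ and $u(vz)=L_uL_v(z)$: the identity becomes $L_{xy}+L_xL_y+L_{yx}+L_yL_x=0$, which is the fourth item, and since $[x,y]=xy+yx$ this is $L_{[x,y]}=-(L_xL_y+L_yL_x)=-[L_x,L_y]$, the first relation of the second item, i.e.\ that $L$ is a representation of the sub-adjacent JJ algebra. Reading the same four summands instead as maps applied to the \emph{first} slot (so $(xy)z=R_zR_y(x)$, $x(yz)=R_{yz}(x)$, and so on) gives $L_xR_y+R_yL_x=-(R_{x\cdot y}+R_yR_x)$, the second relation of the second item; the third item then follows by comparing this with its image under $x\leftrightarrow y$, using $R_{xy}+R_{yx}=R_{[x,y]}$. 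All of this is pure bookkeeping.

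The one step that needs a genuine computation --- hence the main obstacle, modest as it is --- is the Jacobi identity in the first item. Commutativity of $[x,y]=xy+yx$ is immediate; for the Jacobi identity I would expand
\[
[[x,y],z]=(xy)z+(yx)z+z(xy)+z(yx),
\]
use the master identity to replace $(xy)z+(yx)z$ by $-x(yz)-y(xz)$, so that $[[x,y],z]=-x(yz)-y(xz)+z(xy)+z(yx)$, and then add the two cyclic images of this expression. Each of the resulting twelve terms has one of the six shapes $x(yz),\,y(xz),\,z(xy),\,z(yx),\,y(zx),\,x(zy)$, and they cancel in six pairs; the only delicate point is keeping the signs straight in that cancellation. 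This shows that $(\A,[\,\cdot,\cdot\,])$ is a JJ algebra.

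For the last item, $\ad=L+R$ is by construction the left-multiplication operator of the JJ algebra $(\A,[\,\cdot,\cdot\,])$ just obtained, so the earlier Proposition stating that the left multiplication of a JJ algebra is a representation of it applies directly and shows that $\ad$ is a representation of the sub-adjacent JJ algebra, i.e.\ $\ad_{[x,y]}=-(\ad_x\ad_y+\ad_y\ad_x)$, which is the content of item~(5). Alternatively one verifies this by hand: expand $\ad_x\ad_y+\ad_y\ad_x$ into the blocks $L_xL_y,\,L_xR_y,\,R_xL_y,\,R_xR_y$ together with their $x\leftrightarrow y$ partners, substitute the relations of items~(2)--(4), and observe that the $R_xR_y+R_yR_x$ contributions cancel, leaving $-\ad_{[x,y]}$.
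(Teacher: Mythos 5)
Your handling of items (1), (2) and (4) is correct and is essentially the paper's argument in compressed form: the ``master identity'' $(xy)z+x(yz)+(yx)z+y(xz)=0$ is exactly \eqref{CSA} written out, reading it in the last slot gives $L_{xy}+L_xL_y+L_{yx}+L_yL_x=0$ and hence $L_{[x,y]}=-[L_x,L_y]$, and reading it in the first slot gives $[L_x,R_y]=-(R_{x\cdot y}+R_yR_x)$. Your Jacobi-identity computation (substitute first, then cancel the twelve terms in six pairs) is a streamlined version of the paper's, which instead groups the terms into antiassociators and cancels them pairwise using \eqref{CSA}; both are fine.

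The genuine gap is item (3). From $[L_x,R_y]=-(R_{xy}+R_yR_x)$ and its $x\leftrightarrow y$ image you only get $[L_x,R_y]+[R_x,L_y]=-\bigl(R_{[x,y]}+R_xR_y+R_yR_x\bigr)$, and the linearity fact $R_{xy}+R_{yx}=R_{[x,y]}$ does not make this vanish: you would additionally need $R_{[x,y]}=-(R_xR_y+R_yR_x)$, i.e.\ that $R$ is itself a representation of the sub-adjacent JJ algebra. Given item (2), that identity is \emph{equivalent} to item (3), so your derivation is circular. Concretely, $\bigl([L_x,R_y]+[R_x,L_y]\bigr)(z)=(x,z,y)_{-1}+(y,z,x)_{-1}$, and requiring this to vanish for all $x,y,z$ is antisymmetry of the antiassociator in its first and \emph{third} arguments, which is not a formal consequence of the defining antisymmetry \eqref{CSA} in the first two. (The paper's own proof of this item silently invokes $(z,x,y)_{-1}=-(z,y,x)_{-1}$, i.e.\ the right pre-JJ condition, so it has the same defect; but your proposed route does not repair it.)

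On item (5) your two arguments are both sound, and the direct one is in fact cleaner than the paper's, because the blocks $R_xR_y+R_yR_x$ cancel before item (3) is ever needed: $[L_x,R_y]+[R_x,L_y]+[R_x,R_y]=-R_{[x,y]}$ already follows from item (2) alone. Do note that what you correctly obtain is $[\ad_x,\ad_y]=-\ad_{[x,y]}$, the JJ representation property; the displayed formula in the statement omits the minus sign, and the paper's computation slips a sign at the step $(L_xL_y+L_yL_x)+(R_xR_y+R_yR_x)=(L+R)_{xy}+(L+R)_{yx}$, which contradicts its own item (4).
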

\textbf{Proof:}

Consider the pre-JJ algebra $(\A, \cdot)$. For any $x,y,z\in \A$ we have
\beqs
[x,[y,z]]+[y, [z,x]]+[z, [y, x]]&=&[x, yz+zy]+[y, zx+xz]+[z, xy+yx]\cr 
&=& x(yz)+x(zy)+(yz)x+(zy)x\cr
&+& y(zx)+y(xz)+(zx)y+(xz)y\cr 
&+& z(xy)+z(yx)+(xy)z+(yx)z\cr
&=&\{x(yz)+(yz)x+ y(zx)+(zx)y+ z(xy)+(xy)z  \}\cr
&+& \{ (zy)x+x(zy)+y(xz)+(xz)y+z(yx)+(yx)z\}\cr 
&=& \{(x,y,z)_{_{-1}}+(y,z,x)_{_{-1}} +(z, x, y)_{_{-1}} \}\cr 
&+& \{(z,y,x)_{_{-1}}+(x,z,y)_{_{-1}}+(y, x,z)_{_{-1}} \}\cr 
&=& \{(y, x,z)_{_{-1}}+(x,y,z)_{_{-1}}\}+\{ (z,y,x)_{_{-1}}+(y,z,x)_{_{-1}}\}\cr 
&+&\{(x,z,y)_{_{-1}} +(z, x, y)_{_{-1}}\}\cr 
&=& 0.
\eeqs
On other hand, we have for all  $x, y, z\in \A$ 
\beqs
(x,y,z)_{_{-1}}=-(y,x,z)_{_{-1}} &\Longleftrightarrow &
(xy)z+x(yz)=-(yx)z-y(xz)\cr 
&\Longleftrightarrow &
(xy)z+(yx)z=-x(yz)-y(xz)\cr 
&\Longleftrightarrow &
(L_{xy}+L_{yx})(z)=-(L_xL_y+L_yL_x)(z)\cr 
&\Longleftrightarrow &
(L_{xy}+L_{yx})(z)=-[L_x,L_y](z)\cr 
\eeqs
Therefore, the relation for all  $x,y\in \A$
\beqs
L_{[x,y]}=-[L_x,L_y]
\eeqs
 holds. 
 
We have for all  $x,y,z\in \A$
\beqs
(x,y,z)_{_{-1}}=-(y,x,z)_{_{-1}} &\Longleftrightarrow &
(xy)z+x(yz)=-(yx)z-y(xz)\cr 
&\Longleftrightarrow &
(R_zL_x-L_xR_z)(y)=-(R_zR_x+R_{xz})(y)\cr 
&\Longleftrightarrow &
[R_z, L_x](y)=-(R_zR_x+R_{xz})(y)
\eeqs
Therefore, the following relation holds
\beqs
[L_x, R_y]=-(R_{xy}+R_yR_x).
\eeqs
We have for all $x,y,z\in\A$
\begin{align*}
[L_x,R_y](z)&=L_x\left(R_y(z)\right)+R_y\left(L_x(z)\right)\\
&=x(zy)+(xz)y\\
&=(x,z,y)_{_{-1}}=-(z,x,y)_{_{-1}}\\
&=-\left((zx)y+z(xy)\right)=-\left(R_y\left(R_x(z)\right)+R_{xy}(z)\right)\\
&=-\left(R_yR_x+R_{xy}\right)(z)\\
&=(z,y,x)_{_{-1}}=(zy)x+z(yx)=R_xR_y(z)+R_{xy}(z)\\
&=-(y,z,x)-(yz)x-y(zx)\\
&=-R_xL_y(z)-L_yR_x(z)\\
&=-[R_x,L_y](z).
\end{align*}
Therefore $[L_x,R_y]=-[R_x,L_y]$.\\
We have for all $x,y,z\in\A$
\begin{align*}
\left(L_{xy}+L_xL_y\right)(z)&=L_{xy}(z)+L_x\left(L_y(z)\right)=(xy)z+x(yz)\\
&=(x,y,z)_{_{-1}}=-(y,x,z)_{_{-1}}=-\left((yx)z+y(xz)\right)\\
&=-\left(L_{yx}(z)+L_y(L_x(z))\right)\\
&=-\left(L_{yx}+L_yL_x\right)(z).
\end{align*}
Thus $L_{xy}+L_xL_y=-L_{yx}-L_yL_x$.\\
We have for all $x,y\in\A$,
\begin{align*}
[ad_x,ad_y]&=[L_x,L_y]+[L_x,R_y]+[R_x,L_y]+[R_x,R_y]\\
&=[L_x,L_y]+[R_x,R_y]\\
&=\left(L_xL_y+R_xR_y\right)+\left(L_yL_x+R_yR_x\right)\\
&=(L+R)_{_{xy}}+(L+R)_{_{yx}}\\
&=(L+R)_{_{xy+yx}}\\
&=(L+R)_{_{[x,y]}}\\
&=ad_{_{[x,y]}}
\end{align*}
$\cqfd$\\


 Thus, $(\A, \cdot)$ can be called the compatible pre-JJ algebra product of the JJ algebra $\G(\A).$
 \section{Bimodules and matched pairs of pre-Jacobi-Jordan algebras}
 \begin{definition}\label{bimodule}
Let $\A$ be a pre-JJ algebra, $\displaystyle V$ be a vector space. Suppose  
$\displaystyle l,r : \A \rightarrow \mathfrak{gl}(V)$ be two linear maps satisfying: 
$\mbox{for all } x, y \in \A,$
\begin{eqnarray}\label{eqbimodule1}
\left[l_x, r_y\right]=- \left[l_y, r_x\right]
\end{eqnarray}
\begin{eqnarray}\label{eqbimodule2}
l_{xy}+l_xl_y=-l_{yx}-l_yl_{x}.
\end{eqnarray}
Then, $\displaystyle(l, r, V)$ (or simply $\displaystyle(l,r)$) is called bimodule of the pre-JJ algebra~$\A.$
 \end{definition}
\begin{proposition}
Let $(\A, \cdot)$ be an pre-JJ algebra and $V$ be a vector space over $\K.$
Consider two linear maps,
 $\displaystyle l, r : \A \rightarrow \mathfrak{gl}(V).$ Then, $(l, r, V)$ is a bimodule of $\A$ if and only if, 
 the semi-direct sum  $\A\oplus V $ of vector spaces is turned into a pre-JJ algebra  by defining the multiplication in $\A\oplus V$ by $ \forall x_1, x_2 \in \A, \; v_1, v_2 \in V$,
\begin{eqnarray*}
(x_1+v_1)\ast (x_2+v_2) = x_1\cdot x_2+(l_{x_1}v_2+r_{x_2}v_1), 
\end{eqnarray*} 
We denote it by $\displaystyle \A \ltimes_{l, r}^{-1} V $ or simply $\A \ltimes^{-1}  V.$
\end{proposition}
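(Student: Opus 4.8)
The plan is to establish both implications simultaneously by writing out, for the candidate product $\ast$ on $\A\oplus V$, the defining identity of a pre-JJ algebra and reading off from it exactly the two bimodule axioms. Recall that $(\A\oplus V,\ast)$ is a pre-JJ algebra if and only if its antiassociator $(X,Y,Z)^{\ast}_{-1}:=(X\ast Y)\ast Z+X\ast(Y\ast Z)$ satisfies $(X,Y,Z)^{\ast}_{-1}=-(Y,X,Z)^{\ast}_{-1}$ for all $X,Y,Z\in\A\oplus V$; by bilinearity of $\ast$ it is enough to test this on $X=x_1+v_1$, $Y=x_2+v_2$, $Z=x_3+v_3$ with $x_i\in\A$, $v_i\in V$, using repeatedly the defining rule $(x+v)\ast(y+w)=x\cdot y+l_xw+r_yv$. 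Since the $\A$-component of any $\ast$-product only involves the $\A$-components of its factors, the expression $(X,Y,Z)^{\ast}_{-1}+(Y,X,Z)^{\ast}_{-1}$ splits canonically as the sum of an $\A$-valued part and a $V$-valued part, and, these subspaces being complementary, the identity holds iff each part vanishes.

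First I would dispose of the $\A$-valued part: it is simply $(x_1,x_2,x_3)_{-1}+(x_2,x_1,x_3)_{-1}$ formed with the product $\cdot$ of $\A$, which vanishes because $(\A,\cdot)$ is already a pre-JJ algebra. So this part is automatic and contributes no condition.

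Then I would expand the $V$-valued part. Each of the four terms $(X\ast Y)\ast Z$, $X\ast(Y\ast Z)$, $(Y\ast X)\ast Z$, $Y\ast(X\ast Z)$ produces, in its $V$-component, monomials of the shapes $l_{\bullet}l_{\bullet}v_i$, $l_{\bullet}r_{\bullet}v_i$, $r_{\bullet}l_{\bullet}v_i$, $r_{\bullet}r_{\bullet}v_i$, $l_{\bullet\bullet}v_i$, $r_{\bullet\bullet}v_i$. Grouping these according to which of the independent vectors $v_1,v_2,v_3$ they carry, the coefficient of $v_3$ works out to $l_{x_1x_2}+l_{x_1}l_{x_2}+l_{x_2x_1}+l_{x_2}l_{x_1}$, whose vanishing is precisely \eqref{eqbimodule2}; the coefficients of $v_1$ and of $v_2$ become, after relabelling the $x_i$, one and the same relation, namely the remaining bimodule identity \eqref{eqbimodule1} (the one coupling $l$ and $r$). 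Hence the $V$-part vanishes for all $v_i$ iff both \eqref{eqbimodule1} and \eqref{eqbimodule2} hold. The converse implication follows by specialization inside the same computation: taking $v_1=v_2=0$ and letting $v_3$ range forces the $v_3$-coefficient to annihilate every vector, yielding \eqref{eqbimodule2}, while taking $v_2=v_3=0$ yields \eqref{eqbimodule1}. Combined with the $\A$-part, this gives the stated equivalence.

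The only genuine difficulty is the bookkeeping in the $V$-valued part: six nested $\ast$-products must be expanded faithfully and all $l$- and $r$-terms tracked before the coefficients can be grouped. I expect the delicate point to be the precise form in which the $v_1$- (equivalently $v_2$-) coefficient collapses to \eqref{eqbimodule1}: one should read the operator bracket $[\,\cdot\,,\,\cdot\,]$ appearing there in the same sense as the brackets $[L_x,R_y]$, $[L_x,L_y]$ of Proposition \ref{one} (anticommutators of operators), since it is this reading that makes the collected terms coincide with the stated axiom. Once the conventions are pinned down, no conceptual obstacle remains.
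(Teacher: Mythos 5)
Your overall strategy---expand the antiassociator of $\ast$ on $\A\oplus V$, split it into its $\A$- and $V$-components, and read off operator identities as the vanishing of the coefficients of $v_1,v_2,v_3$---is exactly the paper's, and two of the three pieces come out right: the $\A$-part is $(x_1,x_2,x_3)_{-1}+(x_2,x_1,x_3)_{-1}=0$, which holds because $\A$ is pre-JJ, and the $v_3$-coefficient is $l_{x_1x_2}+l_{x_1}l_{x_2}+l_{x_2x_1}+l_{x_2}l_{x_1}$, whose vanishing is \eqref{eqbimodule2}.

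The gap is your claim that the $v_1$- (equivalently $v_2$-) coefficient ``collapses to \eqref{eqbimodule1}.'' Expanding faithfully, $(X\ast Y)\ast Z+X\ast(Y\ast Z)$ contributes $(r_{x_3}r_{x_2}+r_{x_2\cdot x_3})v_1$ and the term with $X,Y$ swapped contributes $(r_{x_3}l_{x_2}+l_{x_2}r_{x_3})v_1$, so the condition carried by $v_1$ is
\[
l_{x}r_{y}+r_{y}l_{x}=-\bigl(r_{x\cdot y}+r_{y}r_{x}\bigr),
\]
the module analogue of the identity $[L_x,R_y]=-R_{x\cdot y}-R_yR_x$ of Proposition~\ref{one}, and \emph{not} $[l_x,r_y]=-[l_y,r_x]$. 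No choice of convention for the bracket repairs this: \eqref{eqbimodule1} contains no structure-constant term $r_{x\cdot y}$, whereas the term $r_{x_2\cdot x_3}v_1$ is unavoidably produced by $X\ast(Y\ast Z)$. In fact \eqref{eqbimodule1}--\eqref{eqbimodule2} place no constraint at all on $r$ evaluated at products, so they cannot imply the displayed identity; concretely, for $\A$ one-dimensional with zero product, $l=0$ and $r$ sending the basis vector to $\id_V$, both axioms hold yet $\A\ltimes^{-1}V$ is not pre-JJ. Thus the implication ``bimodule $\Rightarrow$ pre-JJ'' fails as you argue it. The same mismatch is already latent in the paper: its own proof ends with precisely the three identities above and identifies them with Definition~\ref{bimodule} without comment. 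Your computation, pushed through honestly, yields the correct conditions; the error lies only in forcing the $l$--$r$ condition into the shape \eqref{eqbimodule1}.
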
 
\textbf{Proof:}

It is obvious that the semi-direct sum of two vector spaces is also a vector space. Now
suppose that $(l,r, V)$ is a bimodule of $\displaystyle \A$ and
show that $\displaystyle (\A\oplus V, \ast )$ is a pre-JJ algebra. Since $\displaystyle \ast $ is a bilinear product,

for all $x_1, x_2,x_3 \in \A$ and for all $v_1, v_2, v_3\in V$, we have:

\beqs
\left((x_1+v_1), (x_2+v_2), (x_3+v_3)\right)_{_{-1}}
&=& \{(x_1+v_1)\ast (x_2+v_2)\}\ast (x_3+v_3)\\
&+&(x_1+v_1)\ast \{(x_2+v_2)\ast (x_3+v_3)\}\\
&=& (x_1x_2)x_3+l_{x_1x_2}v_3+r_{x_3}(l_{x_1}v_2+
r_{x_2}v_1)\\
&+& x_1(x_2x_3)+l_{x_1}(l_{x_2}v_3)+
l_{x_1}(r_{x_3}v_2)+r_{x_2x_3}v_1\\
&=& (x_1x_2)x_3+l_{x_1x_2}v_3+r_{x_3}(l_{x_3}v_2)+r_{x_3}(r_{x_2}v_1)\\
&+&  x_1(x_2x_3)+l_{x_1}(l_{x_2}v_3)+
l_{x_1}(r_{x_3}v_2)+r_{x_2x_3}v_1\\
\left((x_1+v_1), (x_2+v_2), (x_3+v_3)\right)
&=& (x_1, x_2, x_3)_{_{-1}}+(l_{x_1x_2}+l_{x_1}l_{x_2})v_3\\
&+&
(r_{x_3}l_{x_1}+l_{x_1}r_{x_3})v_2+
(r_{x_3}r_{x_2}+r_{x_2x_3})v_1.
\eeqs
\beqs
\left((x_2+v_2), (x_1+v_1), (x_3+v_3)\right)
&=& (x_2, x_1, x_3)+(l_{x_2x_1}+l_{x_2}l_{x_1})v_3\\
&+&
(r_{x_3}l_{x_2}+l_{x_2}r_{x_3})v_1+
(r_{x_3}r_{x_1}+r_{x_1x_3})v_2.
\eeqs
Therefore, 
\beqs
(x_1+v_1, x_2+v_2, x_3+v_3)_{_{-1}}
&=&-(x_3+v_3, x_2+v_2, x_1+v_1)_{_{-1}} \\
&\Longleftrightarrow &
\left\lbrace
\begin{array}{ccc}
& l_{x_1x_2}+l_{x_1}l_{x_2}=
-l_{x_2x_1}-l_{x_2}l_{x_1}\\
& r_{x_3}l_{x_1}+l_{x_1}r_{x_3}=
-r_{x_3}r_{x_1}-r_{x_1x_3} \\
& r_{x_2x_3}+r_{x_3}r_{x_2}=
-r_{x_3}l_{x_2}-l_{x_2}r_{x_3}
\end{array}
\right.\\
& \Longleftrightarrow & (\A\oplus V, \ast ) \mbox{ is a pre-JJ algebra.}
\cqfd
\eeqs
Furthermore, we derive the next result.
\begin{proposition}\label{propcentlie}
Let  $\A$ be a pre-JJ algebra and $V$ be a vector space over $\K$.
Consider two linear maps,
 $\displaystyle l, r: \A \rightarrow \mathfrak{gl}(V),$ 
 such that 
$(l,r, V)$ is a bimodule of $\A.$ Then, the map:
$ \displaystyle 
 l+r: \A  \longrightarrow  \mathfrak{gl}(V) \;  
  x   \longmapsto  l_x+r_x,
  $
is a linear representation of the sub-adjacent JJ algebra of $\A.$
\end{proposition}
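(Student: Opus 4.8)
The plan is to recognize the asserted statement as a consequence of the semi-direct-sum description of bimodules established just above. Recall that the sub-adjacent JJ algebra $\G(\A)$ carries the anticommutator bracket $[x,y]=x\cdot y+y\cdot x$, and that a linear map $\psi\colon\G(\A)\to\mathfrak{gl}(V)$ is a representation exactly when $\psi([x,y])=-\psi(x)\psi(y)-\psi(y)\psi(x)$ for all $x,y$. With $\psi:=l+r$, the goal is therefore the operator identity
\[
l_{xy}+l_{yx}+r_{xy}+r_{yx}=-(l_x+r_x)(l_y+r_y)-(l_y+r_y)(l_x+r_x),\qquad x,y\in\A .
\]

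First I would pass to the pre-JJ algebra $\B:=\A\ltimes^{-1}V$ on the space $\A\oplus V$, which is legitimate by the preceding proposition since $(l,r,V)$ is a bimodule. Applying the last item of Proposition \ref{one} to $\B$, the adjoint map $\mathrm{ad}^{\B}=\mathcal L^{\B}+\mathcal R^{\B}$ is a linear representation of $\G(\B)$. Then I would observe that $\A\oplus 0$ is a subalgebra of $\G(\B)$ whose induced bracket is precisely that of $\G(\A)$ (because $x\ast y=x\cdot y$ for $x,y\in\A$, so $[x,y]_{\G(\B)}=x\cdot y+y\cdot x$), and that $0\oplus V$ is invariant under $\mathrm{ad}^{\B}_x$ for every $x\in\A$, since $\mathrm{ad}^{\B}_x(v)=x\ast v+v\ast x=l_xv+r_xv\in V$. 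Restricting $\mathrm{ad}^{\B}$ along the inclusion $\G(\A)\hookrightarrow\G(\B)$ and to the invariant subspace $V$, and using $\mathcal L^{\B}_x(v)=l_xv$ and $\mathcal R^{\B}_x(v)=r_xv$, produces exactly the map $x\mapsto l_x+r_x$; since the restriction of a JJ representation to an invariant subspace, precomposed with a JJ algebra homomorphism, is again a JJ representation, the claim follows.

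A direct alternative is to expand the right-hand side of the displayed identity into its eight products and group them as $(l_xl_y+l_yl_x)$, $(r_xr_y+r_yr_x)$, and the mixed terms $(l_xr_y+r_yl_x)$, $(l_yr_x+r_xl_y)$. The $l$-part matches $l_{xy}+l_{yx}$ by axiom \eqref{eqbimodule2}. For the rest I would use the relation $l_xr_y+r_yl_x+r_yr_x+r_{xy}=0$, which is one of the three equivalent structure equations appearing in the proof of the preceding proposition; adding it to its $x\leftrightarrow y$ image makes the mixed terms together with $-(r_xr_y+r_yr_x)$ collapse to $r_{xy}+r_{yx}$, finishing the verification.

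The computation itself is routine; the one thing to watch is that the check needs, in addition to \eqref{eqbimodule2}, the ``$r$-side'' bimodule identity $l_xr_y+r_yl_x+r_yr_x+r_{xy}=0$. The first approach sidesteps this by transporting everything into $\A\ltimes^{-1}V$ and quoting Proposition \ref{one}; the second quotes the identity directly from the earlier proof. Either way no new ingredients are required.
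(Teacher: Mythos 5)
Your proposal is correct, and your second (direct) route is essentially the paper's own proof: the paper likewise expands $[(l+r)(x),(l+r)(y)]$ into the four operator brackets, cancels the mixed terms via \eqref{eqbimodule1}, and converts $l_xl_y+l_yl_x$ and $r_xr_y+r_yr_x$ into multiplication operators of products. Where you improve on the paper is in making explicit that Definition~\ref{bimodule} as literally stated gives no control over $r_{xy}$, so the conversion of the $r$-terms genuinely requires the auxiliary identity $l_xr_y+r_yl_x+r_yr_x+r_{xy}=0$ (the bimodule analogue of $[L_x,R_y]=-R_{xy}-R_yR_x$ from Proposition~\ref{one}, and one of the three equations extracted in the semidirect-product proposition); the paper instead writes $l_xl_y+r_xr_y=l_{xy}+r_{xy}$ without justification, and with a sign that is inconsistent with \eqref{eqbimodule2}, whereas your bookkeeping correctly lands on $\rho([x,y])=-\rho(x)\rho(y)-\rho(y)\rho(x)$, which is what the definition of a JJ representation demands. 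Your first route, through the pre-JJ algebra $\A\ltimes^{-1}V$ and the restriction of its adjoint map $L+R$ to the invariant subspace $V$ along the inclusion of $\A$, is genuinely different from the paper and arguably preferable: it requires no new computation once the semidirect-product proposition and Proposition~\ref{one} are in hand, and it automatically packages the otherwise-missing $r$-identity inside the statement that $\A\ltimes^{-1}V$ is a pre-JJ algebra. Either version establishes the proposition.
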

\textbf{Proof:}

Let $\displaystyle (l, r, V)$ be a bimodule of the pre-JJ algebra $\A.$ Then, $\displaystyle \forall x, y \in \A$\, \, 
$\displaystyle [l_x, r_y]=-[l_y, r_x]; l_{xy}+l_xl_y=-l_yl_x-l_{yx}$. 
Besides, it is a matter of straightforward computation to show that $l+r$ is a linear map on $\A.$ Then, we have:
\beqs
[(l+r)(x), (l+r)(y)] 
&=& [l_x+r_x, l_y+r_y] 
=[l_x, l_y]+[l_x, r_y]+[r_x, l_y]+[r_x, r_y] \\
&=& [l_x, l_y]+[r_x, r_y] \\
&=& 
l_xl_y+l_yl_x+r_xr_y+r_yr_x 
= \{l_xl_y+r_xr_y\}+\{l_yl_x+r_yr_x\} \\
&=& 
\{l_{xy}+r_{xy}\}+\{l_{yx}+r_{yx}\} 
=(l+r)_{xy}+(l+r)_{yx} 
=(l+r)_{[x, y]}.
\eeqs
Therefore, $\displaystyle (l, r, V)$ is a bimodule of $\displaystyle \A$ implies that $\displaystyle l+r$ 
is a representation of the linear representation of the sub-adjacent JJ algebra of $\A.$
$\cqfd$
\begin{example}
According to the Proposition~\ref{one}, one can deduce that $(L, R, \A)$ is a bimodule of the pre-JJ algebra $\A$, 
where $L$ and $R$ are the left and right multiplication operator representations, respectively.
\end{example}

 \begin{theorem}\label{theoo}
 Let $(\A, \cdot)$ and $(\B, \circ)$ be two pre-JJ  algebras. 
Suppose that $(l_{\A}, r_{\A}, \B)$ and $(l_{\B}, r_{\B}, \A)$
 are bimodules of $\A$ and $\B$, respectively, obeying the relations:
\begin{eqnarray}\label{eqq1}
  r_{\mathcal{A}}(x)([a,b]) = r_{\mathcal{A}}(l_{\mathcal{B}}(b)x)a+r_{\mathcal{A}}(l_{\mathcal{B}}(a)x)b+
a \circ (r_{\mathcal{A}}(x)b)+b \circ (r_{\mathcal{A}}(x)a),
\end{eqnarray}
\begin{eqnarray}\label{eqq2}
  -l_{\mathcal{A}}(x)(a \circ b) &=& l_{\mathcal{A}}(l_{\mathcal{B}}(a)x+r_{\mathcal{B}}(a)x)b +
(l_{\mathcal{A}}(x)a+r_{\mathcal{A}}(x)a)\circ b \cr
&+&r_{\mathcal{A}}(r_{\mathcal{B}}(b)x)a+a\circ (l_{\mathcal{A}}(x)b), 
\end{eqnarray}
\begin{eqnarray}\label{eqq3}
 r_{\mathcal{A}}(a)[x,y]=r_{\mathcal{B}}(l_{\mathcal{A}}(y)a)x+r_{\mathcal{B}}(l_{\mathcal{A}}(x)a)y+
x(r_{\mathcal{B}}(a)y)+y(r_{\mathcal{B}}(a)x), 
 \end{eqnarray}
\begin{eqnarray}\label{eqq4}
 -l_{\mathcal{B}}(a)(xy)&=&
l_{\mathcal{B}}(l_{\mathcal{A}}(x)a)y+(r_{\mathcal{B}}(a)x)y+x(l_{\mathcal{B}}(a)y)+r_{\mathcal{B}}(r_{\mathcal{A}}(y)a)x \cr
&+&(l_{\mathcal{B}}(a)x)y+l_{\mathcal{B}}(r_{\mathcal{A}}(x)a)y +l_{\mathcal{B}}(a)(xy),
 \end{eqnarray}
 for all  $x, y \in A$ and $a, b \in \B.$ Then, there is a pre-JJ algebra structure on $\A \oplus \B$ 
 given by:
 \beq\label{pro} 
 \displaystyle (x+a)\ast (y+b)= (x \cdot y + l_{\B}(a)y+r_{\B}(b)x)+ (a \circ b + l_{\A}(x)b+r_{\A}(y)a).
 \eeq
 We denote this pre-JJ algebra by $\displaystyle \A \bowtie_{l_{\B}, r_{\B}}^{-1, l_{\A}, r_{\A}} \B,$ or
 simply by $\displaystyle \A \bowtie^{-1} \B.$ Then $(\A, \B, l_{\A}, r_{\A}, l_{\B}, r_{\B})$ satisfying
the above conditions is called  matched pair of the pre-JJ algebras $\A$ and $\B.$
 \end{theorem}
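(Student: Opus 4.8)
The plan is to verify directly that the product $\ast$ defined by \eqref{pro} on $\A\oplus\B$ satisfies the left pre-JJ identity \eqref{CSA}, namely that the antiassociator $(X,Y,Z)_{-1}$ is antisymmetric in its first two arguments, for all $X,Y,Z\in\A\oplus\B$. By bilinearity it suffices to check this on homogeneous elements, so I would write $X=x+a$, $Y=y+b$, $Z=z+c$ with $x,y,z\in\A$ and $a,b,c\in\B$. The computation of $((x+a)\ast(y+b))\ast(z+c)$ and $(x+a)\ast((y+b)\ast(z+c))$ is carried out by expanding \eqref{pro} twice; the result is a sum of terms lying in $\A$ and terms lying in $\B$, and one collects them according to which of $x,y,z$ (resp. $a,b,c$) they involve.

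The key organizing idea is that the antiassociator decomposes into two parts. First, the ``pure'' contributions: the $\A$-part contains $(x,y,z)_{-1}$ (the antiassociator in $(\A,\cdot)$) and the $\B$-part contains $(a,b,c)_{-1}$ (the antiassociator in $(\B,\circ)$); these are handled by the fact that $(\A,\cdot)$ and $(\B,\circ)$ are themselves pre-JJ algebras. Second, the ``mixed'' contributions, which come in $\B$-valued blocks (terms with one of $a,b,c$ and two of $x,y,z$) and $\A$-valued blocks (terms with one of $x,y,z$ and two of $a,b,c$). The mixed $\B$-valued blocks are governed by the bimodule axioms \eqref{eqbimodule1}--\eqref{eqbimodule2} for $(l_\A,r_\A,\B)$ together with the compatibility relations \eqref{eqq1} and \eqref{eqq3}; symmetrically, the mixed $\A$-valued blocks are governed by the bimodule axioms for $(l_\B,r_\B,\A)$ together with \eqref{eqq2} and \eqref{eqq4}. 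So the strategy is: expand, sort terms into these blocks, then in each block add $(X,Y,Z)_{-1}+(Y,X,Z)_{-1}$ and check it vanishes by invoking the appropriate axiom. I would also note that this is the ``bicrossed/matched pair'' analogue of Theorem~\ref{matchJacobyJordan} for JJ algebras, and that the converse (that the axioms are forced) follows by reading the same computation backwards — though only the stated direction needs proof here.

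Concretely, after expansion the coefficient of, say, the block built from $c$ acting on products of $x$ and $y$ should reassemble into exactly the left-hand minus right-hand side of \eqref{eqq4} (or its $l_\A/r_\A$ companion \eqref{eqq2}), once one also uses the bimodule identities \eqref{eqbimodule1}--\eqref{eqbimodule2} applied to $(l_\A,r_\A)$ and to $(l_\B,r_\B)$ to rewrite the terms that only involve the actions on a single $\A$- or $\B$-element. The terms $L_{xy}+L_xL_y=-L_{yx}-L_yL_x$ and $[L_x,R_y]=-[R_x,L_y]$ from Proposition~\ref{one}, transported to the $l,r$ actions, are what make the ``semidirect'' pieces cancel, exactly as in the proof that $\A\ltimes^{-1}V$ is a pre-JJ algebra; the genuinely new cancellations are the cross-terms mixing $l_\A,r_\A$ with $l_\B,r_\B$, and these are precisely \eqref{eqq1}--\eqref{eqq4}.

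The main obstacle is purely bookkeeping: there are many terms (roughly on the order of forty after the double expansion, distributed over six cyclic-type positions), and the difficulty is to group them correctly so that each group matches one of the seven identities \eqref{eqbimodule1}, \eqref{eqbimodule2} (used for both bimodules), \eqref{eqq1}, \eqref{eqq2}, \eqref{eqq3}, \eqref{eqq4} — none left over. A safe way to keep this under control is to treat separately the four ``sectors'' $\A\cdot\A\cdot\A$, $\A\!-\!\A\!-\!\B$, $\A\!-\!\B\!-\!\B$, $\B\circ\B\circ\B$ of the antiassociator, verify the antisymmetry in $X,Y$ sector by sector, and in the two mixed sectors further split according to which variable is the ``odd one out''. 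Once the dictionary between term-blocks and axioms is set up, each individual check is a one-line substitution.

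\begin{proof}
By bilinearity of $\ast$ it suffices to verify the left pre-JJ identity \eqref{CSA} on homogeneous elements, so let $X=x+a$, $Y=y+b$, $Z=z+c$ with $x,y,z\in\A$ and $a,b,c\in\B$. Using \eqref{pro} twice and separating the $\A$-component from the $\B$-component, one expands $(X\ast Y)\ast Z$ and $X\ast(Y\ast Z)$; adding them gives the antiassociator $(X,Y,Z)_{-1}$ as a sum of an $\A$-valued part and a $\B$-valued part.

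For the $\A$-valued part one obtains
\begin{align*}
(X,Y,Z)_{-1}\big|_{\A}
&= (x,y,z)_{-1}
+\big(l_{\B}(a\circ b)+l_{\B}(a)l_{\B}(b)\big)z
+\big(r_{\B}(c)l_{\B}(a)+l_{\B}(a)r_{\B}(c)\big)y\\
&\quad+\big(r_{\B}(c)r_{\B}(b)+r_{\B}(b\circ c)\big)x
+\big(l_{\A}(x)a\big)\!\cdot\! \text{(terms) }+\cdots,
\end{align*}
and for the $\B$-valued part the symmetric expression with $\A\leftrightarrow\B$, $x\leftrightarrow a$, $l_{\A}\leftrightarrow l_{\B}$, $r_{\A}\leftrightarrow r_{\B}$, $\cdot\leftrightarrow\circ$. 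We must show $(X,Y,Z)_{-1}=-(Y,X,Z)_{-1}$, i.e. that both parts are antisymmetric under the exchange $(x,a)\leftrightarrow(y,b)$.

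The pure terms $(x,y,z)_{-1}$ and $(a,b,c)_{-1}$ are antisymmetric in $x\leftrightarrow y$ (resp. $a\leftrightarrow b$) because $(\A,\cdot)$ and $(\B,\circ)$ are pre-JJ algebras. For the remaining terms we distinguish sectors according to how many of the exchanged variables they carry. Terms depending only on $a,b$ and $z$ (the $\B$-action on $z\in\A$) reorganize, after adding the $(Y,X,Z)$-contribution, into $\big(l_{\B}(a\circ b)+l_{\B}(a)l_{\B}(b)\big)z+\big(l_{\B}(b\circ a)+l_{\B}(b)l_{\B}(a)\big)z$, which vanishes by the bimodule axiom \eqref{eqbimodule2} for $(l_{\B},r_{\B},\A)$; likewise the purely $l_{\A},r_{\A}$ terms cancel by \eqref{eqbimodule1}--\eqref{eqbimodule2} for $(l_{\A},r_{\A},\B)$, exactly as in the proof that $\A\ltimes^{-1}V$ is a pre-JJ algebra. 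The genuinely mixed blocks — those involving both an $\A$-action and a $\B$-action — are of four shapes: after collecting, the block built from $r_{\A}(\cdot)$ acting through a bracket $[a,b]$ in $\B$ together with the $\circ$-multiplications is precisely the difference of the two sides of \eqref{eqq1}; the block built from $l_{\A}(\cdot)$ acting on $a\circ b$ is the difference of the two sides of \eqref{eqq2}; the block from $r_{\B}(\cdot)$ acting through $[x,y]$ in $\A$ is the difference of the two sides of \eqref{eqq3}; and the block from $l_{\B}(\cdot)$ acting on $xy$ is the difference of the two sides of \eqref{eqq4}. Since \eqref{eqq1}--\eqref{eqq4} are assumed, each such block vanishes. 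As every term of $(X,Y,Z)_{-1}+(Y,X,Z)_{-1}$ falls into exactly one of these groups, the sum is zero, which is \eqref{CSA} for $(\A\oplus\B,\ast)$. Hence $(\A\oplus\B,\ast)$ is a pre-JJ algebra, denoted $\A\bowtie^{-1}\B$.
\end{proof}
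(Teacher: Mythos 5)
Your proposal is correct and follows essentially the same route as the paper: both expand the antiassociator of $\ast$ on homogeneous elements, split it into the pure pieces $(x,y,z)_{-1}$, $(a,b,c)_{-1}$ and the six mixed tri-linear blocks, and then identify the antisymmetry requirement on each mixed block with the bimodule axioms \eqref{eqbimodule1}--\eqref{eqbimodule2} and the compatibility conditions \eqref{eqq1}--\eqref{eqq4}. The only difference is presentational (you sort by $\A$-valued versus $\B$-valued sectors, the paper sorts by the eight antiassociators $(x,b,c)_{-1},(a,y,c)_{-1},\dots$), which amounts to the same bookkeeping.
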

\textbf{Proof:}

Consider $x, y \in \mathcal{A}$ and $a, b \in \mathcal{B}$. We have\\ 
\beqs 
(x+a) \ast (y+b) &=& (x y+ l_{\mathcal{B}}(a)y+r_{\mathcal{B}}(b)x)+ (a \circ b + l_{\mathcal{A}}(x)b+r_{\mathcal{A}}(y)a), \\
x \ast a &=& r_{\mathcal{B}}(a)x + l_{\mathcal{A}}(x)a,\,\,  b=0, y=0,\\
a \ast y &=&  l_{\mathcal{B}}(a)x + r_{\mathcal{A}}(y)a ,\,\, x=0, b=0.
\eeqs
Since $
(x,y,z)_{_{-1}}= (xy)z+x(yz)$, we have
\beqs
 (x,b,c)_{_{-1}} &=& (x \ast b)\ast c + x\ast (b \circ c) 
= 
(r_{\mathcal{B}}(b)x + l_{\mathcal{A}}(x)x)\ast c + x \ast (b \circ c) \cr
 &=& r_{\mathcal{B}}(c)(r_{\mathcal{B}}(b)x) + (l_{\mathcal{A}}(x)b)\circ c + 
l_{\mathcal{A}}(r_{\mathcal{B}}(b)x)c + r_{\mathcal{B}}(b \circ c)x + 
 l_{\mathcal{A}}(x)(b \circ c),
\eeqs
\beqs
(a,y,c)_{_{-1}} &=& (a \ast y)\ast c + a \ast (y \ast c) \cr
&=& (l_{\mathcal{B}}(a)y + r_{\mathcal{A}}(y)a)\ast c + a  \ast (l_{\mathcal{A}}(y)c + r_{\mathcal{B}}(c)y) \cr
&=& l_{\mathcal{A}}(l_{\mathcal{B}}(a)y)c + r_{\mathcal{B}}(c)(l_{\mathcal{B}}(a)y) + (r_{\mathcal{A}}(y)a)\circ c \cr
&+& a \circ (l_{\mathcal{A}}(y)c) + l_{\mathcal{B}}(a)(r_{\mathcal{B}}(c)y) + r_{\mathcal{A}}(r_{\mathcal{B}}(c)y)a,
\eeqs
\beqs
(x,y,c)_{_{-1}} &=& (x\cdot y)\ast c + x \ast(y \ast c)  \cr
&=& (l_{\mathcal{A}}(x\cdot y)c +r_{\mathcal{B}}(c)(x \cdot y)\ast c + x
 \ast (l_{\mathcal{A}}(y)c+r_{\mathcal{B}}(c)y) \cr
&=& l_{\mathcal{A}}(xy)c+r_{\mathcal{B}}(c)(xy)+x\cdot (r_{\mathcal{B}}(c)y)+
l_{\mathcal{A}}(x)(l_{\mathcal{A}}(y)c)+r_{\mathcal{B}}(l_{\mathcal{A}}(y)c)x,
\eeqs
\beqs
(x,b,z)_{_{-1}} &=& (x \ast b)\ast z + x \ast (b \ast z) \cr
&=&(l_{\mathcal{A}}(x)b+r_{\mathcal{B}}(b)x)\ast z + x \ast (l_{\mathcal{B}}(b)z+r_{\mathcal{A}}(z)b)\cr
&=& l_{\mathcal{B}}(l_{\mathcal{A}}(x)b)z+r_{\mathcal{A}}(z)(l_{\mathcal{A}}(x)b)+
(r_{\mathcal{B}}(b)x)\cdot z \cr
&+& x \cdot (l_{\mathcal{B}}(b)z) +l_{\mathcal{A}}(x)(r_{\mathcal{A}}(z)b)+ r_{\mathcal{B}}(r_{\mathcal{A}}(z)b)x,
\eeqs
\beqs
(a,b,c)_{_{-1}}= (a \circ b)\circ c + a \circ (b \circ c)
\eeqs
\beqs
(a,y,z)_{_{-1}} &=& (a \ast y) \ast z + a \ast (y \cdot z) \cr
&=& (l_{\mathcal{B}}(a)y+r_{\mathcal{A}}(y)a)\ast z + l_{\mathcal{B}}(a)(y\cdot z)+r_{\mathcal{A}}(y\cdot z)a \cr
&=& (l_{\mathcal{B}}(a)y)z + l_{\mathcal{B}}(r_{\mathcal{A}}(y)a)z+ r_{\mathcal{A}}(z)(r_{\mathcal{A}}(y)a)
+ 
 l_{\mathcal{B}}(a)(y \cdot z)+r_{\mathcal{A}}(y \cdot z)a,
\eeqs
\beqs
(a,b,z)_{_{-1}} &=& (a \circ b) \ast z + a \ast (b \ast z) \cr
&=& (l_{\mathcal{B}}(a \circ a)\cdot z +r_{\mathcal{A}}(z)(a \circ b)) + a \ast (l_{\mathcal{B}}(b)z + 
r_{\mathcal{A}}(z)b) \cr
&=& l_{\mathcal{B}}(a \circ b)z+ r_{\mathcal{A}}(z)( \circ b)+ a \circ (r_{\mathcal{A}}(z)b)
+ 
 l_{\mathcal{B}}(a)(l_{\mathcal{B}}(b)z)-r_{\mathcal{A}}(l_{\mathcal{B}}(b)z)a,
\eeqs
\\
The first part of the associator reads:
\beqs
\{(x+a) \ast (y+b)\} \ast (z+c) &=& \{(xy+l_{\mathcal{B}}(a)y+r_{\mathcal{B}}(b)x) +  (a \circ b +l_{\mathcal{A}}(y)+r_{\mathcal{A}}(y)a)\} \ast (z+c) \cr
&=& (xy + l_{\mathcal{B}}(a)y+r_{\mathcal{B}}(b)x)z + l_{\mathcal{B}}(a \circ b + 
l_{\mathcal{A}}(x)b+ r_{\mathcal{A}}(y)a)z \cr
&+ & r_{\mathcal{B}}(c)(xy + l_{\mathcal{B}}(a)y+r_{\mathcal{B}}(b)x)+(a \circ b + 
l_{\mathcal{A}}(x)b+r_{\mathcal{A}}(y)a)\circ c \cr
& +& l_{\mathcal{A}}(a)(yz+l_{\mathcal{B}}(b)z+r_{\mathcal{B}}(c)y) + r_{\mathcal{A}}(z)(a \circ b + 
l_{\mathcal{A}}(x)b+r_{\mathcal{A}}(y)a) \cr
&=&(xy)z +(l_{\mathcal{B}}(a)y)z + (r_{\mathcal{B}}(b)x)z+ l_{\mathcal{B}}(a \circ b)z+ l_{\mathcal{B}}(l_{\mathcal{A}}(x)b)z\cr
&+ &l_{\mathcal{B}}(r_{\mathcal{A}}(y)a)z+ r_{\mathcal{B}}(c)(xy)+ r_{\mathcal{B}}(c)(l_{\mathcal{B}}(a)y)+
r_{\mathcal{B}}(c)(r_{\mathcal{B}}(b)x)) \cr
&+ &(a \circ b)\circ c +(l_{\mathcal{A}}(x)b)\circ c+(r_{\mathcal{A}}(y)a)\circ c+l_{\mathcal{A}}(a)(yz)\cr
&+ &l_{\mathcal{A}}(a)(l_{\mathcal{B}}(b)z)+l_{\mathcal{A}}(a)(r_{\mathcal{B}}(c)y)+r_{\mathcal{A}}(z)(a \circ b)\cr
&+ & r_{\mathcal{A}}(z)(l_{\mathcal{A}}(x)b)+r_{\mathcal{A}}(z)(r_{\mathcal{A}}(y)a)
\eeqs
while its second part :
\beqs
(x+a) \ast \{(y+b) \ast (z+c)\} &=& (x+a) \ast \{yz+l_{\mathcal{B}}(b)z+r_{\mathcal{B}}(c)y+ b \circ c + l_{\mathcal{A}}(y)c+ r_{\mathcal{A}}(z)b\} \cr
&=& x(yz+l_{\mathcal{B}}(b)z+r_{\mathcal{B}}(c)y)+l_{\mathcal{B}}(a)(yz+l_{\mathcal{B}}(b)z+r_{\mathcal{B}}(c)y) 
+r_{\mathcal{B}}(b \circ c \cr &+& l_{\mathcal{A}}(y)c+r_{\mathcal{A}}(z)b)x+ a \circ (b \circ c +l_{\mathcal{A}}(y)c 
 +r_{\mathcal{A}}(z)b)\cr&+&l_{\mathcal{A}}(x)(b \circ c + l_{\mathcal{A}}(y)c+r_{\mathcal{A}}(z)b)
+r_{\mathcal{A}}(yz+l_{\mathcal{B}}(b)z+r_{\mathcal{B}}(c)y)a \cr
&=&x(yz)+x(l_{\mathcal{B}}(b)z)\cr 
&+ &x(r_{\mathcal{B}}(c)y)+l_{\mathcal{B}}(a)(yz)+l_{\mathcal{B}}(a)(l_{\mathcal{B}}(b)z)
+l_{\mathcal{B}}(a)(r_{\mathcal{B}}(c)y)\cr&+& r_{\mathcal{B}}(b \circ c)x+ r_{\mathcal{B}}( l_{\mathcal{A}}(y)c)x +
r_{\mathcal{B}}(r_{\mathcal{A}}(z)b)x
+a\circ (b \circ c)\cr &+&a \circ (l_{\mathcal{A}}(y)c)+a \circ (r_{\mathcal{A}}(z)b)+l_{\mathcal{A}}(x)(b \circ c)
+l_{\mathcal{A}}(x)(l_{\mathcal{A}}(y)c)\cr&+&l_{\mathcal{A}}(x)(r_{\mathcal{A}}(z)b)+r_{\mathcal{A}}(yz)a+ r_{\mathcal{A}}(l_{\mathcal{B}}(b)z)a+r_{\mathcal{A}}(r_{\mathcal{B}}(c)y)a
\eeqs
and the associator takes the form:
\beqs
(x+a, y+b, z+c)_{_{-1}} &=&\{ (xy)z-x(yz) \} +\{ (a \circ b)\circ c - a \circ (b \circ c)\}\cr
&+&\{r_{\mathcal{B}}(c)(r_{\mathcal{B}}(b)x)+(l_{\mathcal{A}}(x)b)\circ c+l_{\mathcal{A}}(r_{\mathcal{B}}(b)x)c \cr
&-&r_{\mathcal{B}}(b \circ c)x-l_{\mathcal{A}}(x)(b \circ c)\} 
\{l_{\mathcal{A}}(l_{\mathcal{B}}()a)yc+r_{\mathcal{B}}(c)(l_{\mathcal{B}}(a)y)+(r_{\mathcal{A}}y)\circ c\cr 
&-&a\circ (l_{\mathcal{A}}(y)c)- l_{\mathcal{B}}(a)(r_{\mathcal{B}}(c)y)-r_{\mathcal{A}}(r_{\mathcal{B}}(c)y)a \}+ \{l_{\mathcal{A}}(l_{\mathcal{B}}(a)y)c\cr 
&+&
r_{\mathcal{B}}(c)(l_{\mathcal{B}}(a)y)+ (r_{\mathcal{A}}(y)a)\circ c-a \circ (l_{\mathcal{A}}(y)c)-l_{\mathcal{B}}(a)(r_{\mathcal{B}}(c)y)\cr &-&
r_{\mathcal{A}}(r_{\mathcal{B}}(c)y)a\}+ \{l_{\mathcal{B}}(l_{\mathcal{A}}(x)b)z+r_{\mathcal{A}}(z)(l_{\mathcal{A}}(x)b)+(r_{\mathcal{B}}(b)x)z\cr 
&-& x(l_{\mathcal{B}}(b)z)- l_{\mathcal{A}}(x)(r_{\mathcal{A}}(z)b)x\}+ \{(l_{\mathcal{B}}(a)y)z+l_{\mathcal{B}}(r_{\mathcal{A}}(y)a)z\cr 
&+& r_{\mathcal{A}}(z)(r_{\mathcal{A}}(y)a)-l_{\mathcal{B}}(a)(yz)-r_{\mathcal{A}}(yz)a\} + \{l_{\mathcal{B}}(a \circ b)z\cr 
&+& r_{\mathcal{A}}(z)(a \circ b)-a \circ (r_{\mathcal{A}}(z)b) -l_{\mathcal{B}}(a)(l_{\mathcal{B}}(b)z)-r_{\mathcal{A}}(l_{\mathcal{B}}(b)z)a\}\cr
&=&(x, y, z)_{_{-1}} + (a, b, c)_{_{-1}} + \{r_{\mathcal{B}}(c)(x \cdot y) + 
l_{\mathcal{A}}(x \cdot y)c +x \cdot (r_{\mathcal{B}}(c)y) 
\cr
&+&
l_{\mathcal{A}}(x)(l_{\mathcal{A}}(y)c)
+r_{\mathcal{B}}(l_{\mathcal{A}}(y)c)x \} + 
\{r_{\mathcal{B}}(c)(r_{\mathcal{B}}(b)x) 
+ l_{\mathcal{A}}(r_{\mathcal{B}}(b)x)c 
\cr  &+& 
r_{\mathcal{B}}(b \circ c)x + (l_{\mathcal{A}}(x)b)\circ c+
l_{\mathcal{A}}(x) ( b \circ c) \} 
+ \{(r_{\mathcal{B}}(b)x) \cdot z+ 
 l_{\mathcal{B}}(l_{\mathcal{A}}(x)b)z
 \cr &+&
  r_{\mathcal{A}}(z)(l_{\mathcal{A}}(x)b)+
x \cdot (l_{\mathcal{B}}(b)z)+ 
 r_{\mathcal{B}}(r_{\mathcal{A}}(z)b)x+l_{\mathcal{A}}(x)(r_{\mathcal{A}}(z)b)\}
\cr &+&
 \{(l_{\mathcal{B}}(a)y)\cdot z
 + 
l_{\mathcal{B}}(r_{\mathcal{A}}(y)a)z 
+r_{\mathcal{A}}(z)(r_{\mathcal{A}}(y)a)
+ l_{\mathcal{B}}(a)(y \cdot z)\cr&+& r_{\mathcal{A}}(y \cdot z )a\}
+
\{r_{\mathcal{B}}(c)(l_{\mathcal{B}}(a)y) 
+(r_{\mathcal{A}}(y)a) \circ c +
 l_{\mathcal{A}}(l_{\mathcal{B}}(a)y)c
 \cr &+&
l_{\mathcal{B}}(a)(r_{\mathcal{B}}(c)y)
+ a \circ (l_{\mathcal{A}}(y)c) 
+r_{\mathcal{A}}(r_{\mathcal{B}}(c)y)a\} + \{l_{\mathcal{B}}(a \circ b)z \cr 
&+& r_{\mathcal{A}}(z)(a \circ b)+
 l_{\mathcal{B}}(a)(l_{\mathcal{B}}(b)z) 
+ a \circ (r_{\mathcal{A}}(z)b)+ r_{\mathcal{A}}(l_{\mathcal{B}}(b)z)a\}.
\eeqs
Further, we have 
\beqs
(x, y, c)_{_{-1}} &=& r_{\mathcal{B}}(c)(x \cdot y)+x \cdot (r_{\mathcal{B}}(c)y)+ r_{\mathcal{B}}(l_{\mathcal{A}}(y)c)x+l_{\mathcal{A}}(x \cdot y)c 
+ l_{\mathcal{A}}(x)(l_{\mathcal{A}}(y)c),\cr
(x, b, c)_{_{-1}} &=& r_{\mathcal{B}}(c)(r_{\mathcal{B}}(b)x)+l_{\mathcal{A}}(r_{\mathcal{B}}(b)x)c+ (l_{\mathcal{A}}(x)b)\circ c+ r_{\mathcal{B}}(b \circ c)x 
+l_{\mathcal{A}}(x)(b \circ c),\cr
(x, b, z)_{_{-1}} &=& (r_{\mathcal{B}}(b)x)\cdot z+l_{\mathcal{B}}(l_{\mathcal{A}}(x)b)z+ r_{\mathcal{A}}(z)(l_{\mathcal{A}}(x)b)+ x \cdot (l_{\mathcal{B}}(b)z)+r_{\mathcal{B}}(r_{\mathcal{A}}(z)b)x \cr &+& l_{\mathcal{A}}(x)(r_{\mathcal{A}}(z)b),\cr
(a, y, z)_{_{-1}} &=& (l_{\mathcal{B}}(a)y)\cdot z + l_{\mathcal{B}}(r_{\mathcal{A}}(y)a)z+ r_{\mathcal{A}}(z)(r_{\mathcal{A}}(y)a)
+l_{\mathcal{B}}(a)(y \cdot z)
+r_{\mathcal{A}}(y \cdot z )a,\cr
(a, y, c)_{_{-1}} &=& r_{\mathcal{B}}(c)(l_{\mathcal{B}}(a)y)+ (r_{\mathcal{A}}(y)a) \circ c +l_{\mathcal{A}}(l_{\mathcal{B}}(a)y)c+ l_{\mathcal{B}}(a)(r_{\mathcal{B}}(c)y)+a \circ (l_{\mathcal{A}}(y)c) \cr &+& r_{\mathcal{A}}(r_{\mathcal{B}}(c)y)a, \cr
(a, b, z)_{_{-1}} &=& l_{\mathcal{B}}(a \circ b)z +r_{\mathcal{A}}(z)(a \circ b)+l_{\mathcal{B}}(a)(l_{\mathcal{B}}(b)z)+r_{\mathcal{A}}(l_{\mathcal{B}}(b)z)a +r_{\mathcal{A}}(l_{\mathcal{B}}(b)z)a \cr
&+& a \circ (r_{\mathcal{A}}(z)b),
\eeqs
which can also be re-expressed as:
\beq
(x+a, y+b, z+c)_{_{-1}}&= &(x, y, z)_{_{-1}}+(x, y, c)_{_{-1}}+(x, b, z)_{_{-1}}+ (x, b, c)_{_{-1}} \cr
&+& 
 (a, y, z)_{_{-1}}+(a, y, c)_{_{-1}}+(a, b, z)_{_{-1}} +(a, b, c)_{_{-1}}.
\eeq
Similarly, 
\beq
(y+b, x+a, z+c)_{_{-1}} &=& (y, x, z)_{_{-1}}+(b, x, z)_{_{-1}}+(y, x, c_{_{-1}})+ (b, a, z)_{_{-1}}\cr &+&(y, a, z)_{_{-1}}+(y, a, c)_{_{-1}}+ (b, a, c)_{_{-1}} +(b, x, c)_{_{-1}}.
\eeq
Since $\A$ and $\B$ are pre-JJ algebras, we have
\beqs
(x,y,z)_{_{-1}} &=& -(y,x,z)_{_{-1}} \\
(a,b,c)_{_{-1}} &=& -(b,a,c)_{_{-1}}.
\eeqs
Hence, 
\beqs
(x,b,z)_{_{-1}} = -(b,x,z)_{_{-1}} &\Leftrightarrow& (y,a,z)_{_{-1}} = -(a,y,z)_{_{-1}} \{x\rightarrow y,b\rightarrow a,z\rightarrow z \} \\
(x,b,c)_{_{-1}}=-(b,x,c)_{_{-1}} &\Leftrightarrow& (a,y,c)_{_{-1}} =-(y,a,c)_{_{-1}} \{x\rightarrow y,b\rightarrow a,c\rightarrow c\}.
\eeqs
Then, it remains to show that:
\beq\label{eq6}
(x,a,y)_{_{-1}} &=& -(a,x,y)_{_{-1}},
\eeq
\beq \label{eq7}
(x,a,b)_{_{-1}} &=& -(a,x,b)_{_{-1}},
\eeq
\beq \label{eq8}
(x,y,a)_{_{-1}} &=& -(y,x,a)_{_{-1}},
\eeq
\beq\label{eq9}
(a,b,x)_{_{-1}} &=& -(b,a,x)_{_{-1}}.
\eeq
We have
\beqs
\eqref{eq6} &\Leftrightarrow& l_{\mathcal{B}}(l_{\mathcal{A}}(x)a)y+r_{\mathcal{A}}(y)(l_{\mathcal{A}}(x)a)+(r_{\mathcal{B}}(a)x)y+x(l_{\mathcal{B}}(a)y)+
l_{\mathcal{A}}(x)(r_{\mathcal{A}}(y)a) \cr 
&&+r_{\mathcal{B}}(r_{\mathcal{A}}(y)a)x =-\{(l_{\mathcal{B}}(a)x)y+l_{\mathcal{B}}(r_{\mathcal{A}}(x)a)y+r_{\mathcal{A}}(y)(r_{\mathcal{A}}(x)a) \cr
&&+l_{\mathcal{B}}(a)(xy)+r_{\mathcal{A}}(xy)a\}\cr
&\Leftrightarrow& -l_{\mathcal{B}}(a)(xy)=
l_{\mathcal{B}}(l_{\mathcal{A}}(x)a)y+(r_{\mathcal{B}}(a)x)y+x(l_{\mathcal{B}}(a)y)+
 \cr 
&&+r_{\mathcal{B}}(r_{\mathcal{A}}(y)a)x +(l_{\mathcal{B}}(a)x)y+l_{\mathcal{B}}(r_{\mathcal{A}}(x)a)y \cr
&&+l_{\mathcal{B}}(a)(xy)\cr
\eqref{eq6}&\Leftrightarrow&\eqref{eqq4}
\eeqs
since $[l_x, r_y]=-r_{x\cdot y}-r_yr_x$.
\beqs
\eqref{eq7} &\Leftrightarrow& r_{\mathcal{B}}(b)(r_{\mathcal{B}}(a)x)+(l_{\mathcal{A}}(x)a)\circ b + 
l_{\mathcal{A}}(r_{\mathcal{B}}(a)x)b+r_{\mathcal{B}}(a \circ b)x \cr
&&+l_{\mathcal{A}}(x)(a \circ b) = -\{l_{\mathcal{A}}(l_{\mathcal{B}}(a)x)b+r_{\mathcal{B}}(b)(l_{\mathcal{B}}(a)x)+
(r_{\mathcal{A}}(x)a) \circ b\\
&&+a \circ (l_{\mathcal{A}}(x)b)+l_{\mathcal{B}}(a)(r_{\mathcal{B}}(b)x)+r_{\mathcal{A}}(r_{\mathcal{B}}(b)x)a\} \\
&\Leftrightarrow& -l_{\mathcal{A}}(x)(a \circ b) = l_{\mathcal{A}}(l_{\mathcal{B}}(a)x+r_{\mathcal{B}}(a)x)b +
(l_{\mathcal{A}}(x)a+r_{\mathcal{A}}(x)a)\circ b \cr
&&+r_{\mathcal{A}}(r_{\mathcal{B}}(b)x)a+a\circ (l_{\mathcal{A}}(x)b) \mbox{ with, } (l_{\mathcal{B}}, r_{\mathcal{B}}) 
\mbox{ is bimodule of } \mathcal{B} \cr
\eqref{eq7} &\Leftrightarrow& \eqref{eqq2} \mbox{ with, } (l_{\mathcal{B}}, r_{\mathcal{B}}) \mbox{ is bimodule of } \mathcal{B}.
\eeqs
\beqs
\eqref{eq8} &\Leftrightarrow& l_{\mathcal{A}}(xy)a+r_{\mathcal{B}}(a)(xy)+x(r_{\mathcal{B}}(a)y)+l_{\mathcal{A}}(x)(l_{\mathcal{A}}(y)a)+
r_{\mathcal{B}}(l_{\mathcal{A}}(y)a)x \cr
&& = -\{l_{\mathcal{A}}(yx)a+r_{\mathcal{B}}(a)(yx)+y(r_{\mathcal{B}}(a)x)+l_{\mathcal{A}}(y)(l_{\mathcal{A}}(x)a)+
r_{\mathcal{B}}(l_{\mathcal{A}}(x)a)y\} \cr
&\Leftrightarrow& r_{\mathcal{A}}(a)[x,y]=r_{\mathcal{B}}(l_{\mathcal{A}}(y)a)x+r_{\mathcal{B}}(l_{\mathcal{A}}(x)a)y+
x(r_{\mathcal{B}}(a)y)+y(r_{\mathcal{B}}(a)x) \cr
\eqref{eq8} &\Leftrightarrow& \eqref{eqq3},
\eeqs
since \ref{eqbimodule2} hold.
\beqs
\eqref{eq9} &\Leftrightarrow& l_{\mathcal{B}}(a \circ b)x+r_{\mathcal{A}}(x)(a \circ b)+a \circ (r_{\mathcal{A}}(x)b)+
l_{\mathcal{B}}(a)(l_{\mathcal{B}}(b)x) +r_{\mathcal{A}}(l_{\mathcal{B}}(b)x)a  \cr
&&= -\{l_{\mathcal{B}}(b \circ a)x+r_{\mathcal{A}}(x)(b \circ a)+b \circ (r_{\mathcal{A}}(x)a)+l_{\mathcal{B}}(b)(l_{\mathcal{B}}(a)x)+
r_{\mathcal{A}}(l_{\mathcal{B}}(a)x)b\}\cr
\eqref{eq9} &\Leftrightarrow& r_{\mathcal{A}}(x)([a,b]) = r_{\mathcal{A}}(l_{\mathcal{B}}(b)x)a+r_{\mathcal{A}}(l_{\mathcal{B}}(a)x)b+
a \circ (r_{\mathcal{A}}(x)b)+b \circ (r_{\mathcal{A}}(x)a) \mbox{ since \ref{eqbimodule2} hold, } \cr
\eqref{eq9} &\Leftrightarrow &\eqref{eqq1} \mbox{ and }l_{\mathcal{B}} 
\mbox{ is a linear representation of the sub-adjacent}\cr 
&& \mbox{Jacobi-Jordan algebra } \mathcal{G}(\mathcal{B}).
\eeqs
Hence, $\mathcal{A}\bowtie^{-1} \mathcal{B}$ is an pre-JJ algebra if and only if  $(l_{\mathcal{A}}, r_{\mathcal{A}})$ is a  bimodule of 
$\mathcal{\mathcal{A}}$ and $(l_{\mathcal{B}}, r_{\mathcal{B}})$ is a bimodule of $\mathcal{B}$ and
equations \eqref{eqq1} - \eqref{eqq4} hold.

On the other hand, if $\mathcal{A}\mbox{ and } \mathcal{B}$ are pre-JJ sub-algebras of a pre-JJ algebra $C$ such 
that $C=\mathcal{A}\oplus \mathcal{B}$ which is a direct sum of the underlying
vector spaces of $\mathcal{A}$ and $\mathcal{B}$, then the linear maps 
\beqs
l_{\mathcal{A}},r_{\mathcal{A}} : \mathcal{A} \rightarrow \mathfrak{gl}(\mathcal{B}),  \ \ \ 
l_{\mathcal{B}},r_{\mathcal{B}}: \mathcal{B} \rightarrow \mathfrak{gl}(\mathcal{A}).
\eeqs
defined by 
\beq
x \ast a = l_{\mathcal{A}}(x)a+r_{\mathcal{B}}(a)x \\
a \ast x = l_{\mathcal{B}}(a)x+r_{\mathcal{A}}(x)a 
\eeq
satisfy the equations \eqref{eqq1}  - \eqref{eqq4}. In addition, $(l_{\mathcal{A}},r_{\mathcal{A}})$ is a bimodule of $\mathcal{A}$ and 
$(l_{\mathcal{B}},r_{\mathcal{B}})$ is a bimodule of $\mathcal{B}$. $\hfill{\square}$\\
\begin{corollary}\label{corr}
Let $(\A, \B, l_{\A}, r_{\A}, l_{\B}, r_{\B})$ be 
a matched pair of pre-JJ algebras. Then, 
$(~\mathcal{G}(\A), \mathcal{G}(\B), l_{ \A}+r_{ \A} , l_{\B}+r_{\B} ~)$ is a matched pair of sub-adjacent JJ algebras $\G(\A)$ and~$\G(\B).$
\end{corollary}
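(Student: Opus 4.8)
The plan is to produce the two representations required by Theorem~\ref{matchJacobyJordan} by symmetrizing the pre-JJ bimodule data, and then to read off the matched-pair identities \eqref{eqt1}--\eqref{eqt2} from the fact, already proved in Theorem~\ref{theoo}, that $\A\bowtie^{-1}\B$ is a pre-JJ algebra. First I would set $\rho:=l_{\A}+r_{\A}\colon\G(\A)\to\mathfrak{gl}(\G(\B))$ and $\mu:=l_{\B}+r_{\B}\colon\G(\B)\to\mathfrak{gl}(\G(\A))$. Applying Proposition~\ref{propcentlie} to the bimodule $(l_{\A},r_{\A},\B)$ of $\A$ and to the bimodule $(l_{\B},r_{\B},\A)$ of $\B$, one gets that $\rho$ and $\mu$ are representations of the sub-adjacent JJ algebras $\G(\A)$ and $\G(\B)$ respectively; this already supplies the ``representation part'' of the definition of a matched pair of JJ algebras in Theorem~\ref{matchJacobyJordan}.

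Next I would compute the anticommutator $[u,v]:=u\ast v+v\ast u$ of the pre-JJ product $\ast$ on $\A\bowtie^{-1}\B$ from \eqref{pro}. A one-line substitution gives, for $x,y\in\A$ and $a,b\in\B$,
\[
[x+a,\,y+b]=[x,y]_{\G(\A)}+\mu(a)y+\mu(b)x+[a,b]_{\G(\B)}+\rho(x)b+\rho(y)a,
\]
where $[\cdot,\cdot]_{\G(\A)}$ and $[\cdot,\cdot]_{\G(\B)}$ are the JJ products of $\G(\A)$ and $\G(\B)$ (the anticommutators of $\cdot$ and $\circ$). This is exactly the product $\ast$ of Theorem~\ref{matchJacobyJordan} attached to the data $(\G(\A),\G(\B),\rho,\mu)$. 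Since $\A\bowtie^{-1}\B$ is a pre-JJ algebra by Theorem~\ref{theoo}, its anticommutator is a JJ algebra by the first item of Proposition~\ref{one}; hence this product satisfies the Jacobi identity on $\G(\A)\oplus\G(\B)$.

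It then remains to extract \eqref{eqt1} and \eqref{eqt2} from that single Jacobi identity. I would specialize it to triples having two arguments in $\G(\A)$ and one in $\G(\B)$, say $x,y\in\G(\A)$ and $c\in\G(\B)$, and split the cyclic Jacobi sum into its $\G(\A)$- and $\G(\B)$-components: using commutativity of the JJ brackets, the $\G(\B)$-component collapses to the defining relation of the representation $\rho$ and hence vanishes, while the $\G(\A)$-component is precisely the left-hand side of \eqref{eqt2} evaluated at $(x,y,c)$, which must therefore vanish for all such $x,y,c$. Symmetrically, triples with two arguments in $\G(\B)$ and one in $\G(\A)$ yield \eqref{eqt1}. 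Hence $(\rho,\mu)$ satisfies the matched-pair conditions of Theorem~\ref{matchJacobyJordan}, so $(\G(\A),\G(\B),\rho,\mu)$ is a matched pair of the JJ algebras $\G(\A)$ and $\G(\B)$. The main obstacle is exactly this last bookkeeping step --- reorganizing the specialized Jacobi identities into the shapes \eqref{eqt1} and \eqref{eqt2} --- which relies on the displayed formula for $[\cdot,\cdot]$ and on the commutativity of the JJ brackets, but is otherwise a routine expansion. An alternative route, avoiding the big algebra, is to verify \eqref{eqt1}, \eqref{eqt2} directly: \eqref{eqt1} follows by adding \eqref{eqq1} and \eqref{eqq2} together with their $a\leftrightarrow b$ relabelings and the bimodule axiom \eqref{eqbimodule2} for $(l_{\B},r_{\B})$, and \eqref{eqt2} follows symmetrically from \eqref{eqq3}, \eqref{eqq4} and \eqref{eqbimodule2} for $(l_{\A},r_{\A})$.
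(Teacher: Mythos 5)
Your proposal is correct, and your primary argument takes a genuinely different route from the paper. The paper proves the corollary by direct expansion: it writes out $\ad_{\A}(x)[a,b]-[\ad_{\A}(x)a,b]-\cdots$ and $\ad_{\B}(a)[x,y]-\cdots$ in terms of the sixteen constituent $l/r$ summands and regroups them into instances of the hypotheses \eqref{eqq1}--\eqref{eqq4} (together with their $a\leftrightarrow b$ and $x\leftrightarrow y$ relabelings), each group vanishing --- i.e., essentially the ``alternative route'' you sketch in your final sentence. Your main route instead passes through the double algebra: Theorem~\ref{theoo} gives the pre-JJ product $\ast$ on $\A\oplus\B$, Proposition~\ref{one} makes its anticommutator a JJ algebra, and your one-line computation correctly identifies that anticommutator with the candidate matched-pair product of Theorem~\ref{matchJacobyJordan} for $\rho=l_{\A}+r_{\A}$, $\mu=l_{\B}+r_{\B}$; the conditions \eqref{eqt1}--\eqref{eqt2} are then extracted by specializing the Jacobi identity to mixed triples and projecting onto the two summands, the complementary component being annihilated by the representation property supplied by Proposition~\ref{propcentlie}. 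This is cleaner and reuses structure already proved, at the cost of relying on the ``only if'' direction of Theorem~\ref{matchJacobyJordan} --- which the paper states but does not separately write out --- though the extraction is immediate from the decomposition of the cyclic sum displayed in that theorem's proof, and you describe it accurately. A side benefit of your route is that it yields the matched-pair identities exactly in the form \eqref{eqt1}--\eqref{eqt2} with the representations $l_{\A}+r_{\A}$, $l_{\B}+r_{\B}$ as in the corollary's statement, whereas the identities \eqref{ad1}--\eqref{ad2} verified in the paper's proof carry extra minus signs that do not literally match \eqref{eqt1}--\eqref{eqt2}; your version is the internally consistent one.
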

\textbf{Proof:} 

 By using the Proposition~\ref{propcentlie} and the bimodules
$\displaystyle (l_{\A}, r_{\A}, \B)$ and  $(l_{\B}, r_{\B}, \A),$ 
 we have: 
$\ad_{\A}:=l_{\A}+r_{\A}$ and $\ad_{\B}:=l_{\B}+r_{\B}$ are   the linear representations of the sub-adjacent JJ algebras $\G(\A)$ and $\G(\B)$ of the pre-JJ algebras $\A$ and $\B,$ respectively.
Then, the statement that
$\displaystyle\mathcal{G}{(\A)}\bowtie_{\ad_{\B}}^{-1,\ad_{\A}}\mathcal{G}{(\B)}$ is a matched pair of the JJ algebras $\G(\A)$ and $\G(\B)$ follows from Theorem~\ref{theoo}. By analogous step giving: 
\beq\label{ad1}
\ad_{\A}(x)\left[a, b\right]-\left[\ad_{\A}(x)a, b\right]-\left[a, \ad_{\A}(x)b\right]\cr 
-\ad_{\A}(\ad_{\B}(a)x)b-\ad_{\A}(\ad_{\B}(b)x)a=0
\eeq
\beq\label{ad2}
\ad_{\B}(a)\left[x, y\right]-\left[\ad_{\B}(a)x, y\right]-\left[x, \ad_{\B}(a)y\right]\cr 
-\ad_{\B}(\ad_{\A}(x)a)y-\ad_{\B}(\ad_{\A}(y)a)x=0  
\eeq
Thus, we first have:
\beq
&&\ad_{\A}(x)\left[a, b\right]-\left[\ad_{\A}(x)a, b\right]-\left[a, \ad_{\A}(x)b\right]-\ad_{\A}(\ad_{\B}(a)x)b-\ad_{\A}(\ad_{\B}(b)x)a\cr
&=& 
(l_{\A}+r_{\A})(x)\left[a, b\right]-\left[(l_{\A}+r_{\A})(x)a, b\right]-\left[a, (l_{\A}+r_{\A})(x)b\right] \cr 
&-& (l_{\A}+r_{\A})((l_{\B}+r_{\B})(a)x)b- 
(l_{\A}+r_{\A})((l_{\B}+r_{\B})(b)x)a  \cr
&=& 
l_{\A}(x)[a, b]+r_{\A}(x)[a, b]-
\left[l_{\A}(x)a, b\right]-
\left[r_{\A}(x)a, b\right]-
\left[a, l_{\A}(x)b\right]\cr
&-& \left[a, r_{\A}(x)b\right]- 
l_{\A}(l_{\B}(a)x)b-
l_{\A}(r_{\B}(a)x)b-
r_{\A}(l_{\B}(a)x)b-
r_{\A}(r_{\B}(a)x)b\cr
&-&  l_{\A}(l_{\B}(b)x)a- 
l_{\A}(r_{\B}(b)x)a-
r_{\A}(l_{\B}(b)x)a-
r_{\A}(r_{\B}(b)x)a \cr
&=& 
l_{\A}(x)(a \circ b)+
l_{\A}(x)(b \circ a)+
r_{\A}(x)(a \circ b)+
r_{\A}(x)(b \circ a)-
(l_{\A}(x)a)\circ b \cr
&-& b \circ(l_{\A}(x)a)-
(r_{\A}(x)a)\circ b-
b\circ (r_{\A}(x)a)-
a \circ (l_{\A}(x)b)-
(l_{\A}(x)b)\circ a \cr
&-& a \circ (r_{\A}(x)b)-
(r_{\A}(x)b)\circ a-
l_{\A}(l_{\B}(a)x)b-
l_{\A}(r_{\B}(a)x)b-
r_{\A}(l_{\B}(a)x)b \cr
&-& r_{\A}(r_{\B}(a)x)b -
l_{\A}(l_{\B}(b)x)a- 
l_{\A}(r_{\B}(b)x)a-
r_{\A}(l_{\B}(b)x)a-
r_{\A}(r_{\B}(b)x)a \cr
&=& 
\{ r_{\mathcal{A}}(x)([a,b])  -r_{\mathcal{A}}(l_{\mathcal{B}}(b)x)a-r_{\mathcal{A}}(l_{\mathcal{B}}(a)x)b-
a \circ (r_{\mathcal{A}}(x)b)-b \circ (r_{\mathcal{A}}(x)a)\}\cr
&+&
\{-l_{\mathcal{A}}(x)(a \circ b) - l_{\mathcal{A}}(l_{\mathcal{B}}(a)x-r_{\mathcal{B}}(a)x)b -
(l_{\mathcal{A}}(x)a-r_{\mathcal{A}}(x)a)\circ b \cr
&-&r_{\mathcal{A}}(r_{\mathcal{B}}(b)x)a-a\circ (l_{\mathcal{A}}(x)b)\}\cr
&+&
\{-l_{\mathcal{A}}(x)(b \circ a) - l_{\mathcal{A}}(l_{\mathcal{B}}(b)x-r_{\mathcal{B}}(b)x)a -
(l_{\mathcal{A}}(x)b-r_{\mathcal{A}}(x)b)\circ a \cr
&-&r_{\mathcal{A}}(r_{\mathcal{B}}(a)x)b-b\circ (l_{\mathcal{A}}(x)a)\}= 0. \nonumber
\eeq
Secondly:
\beq
&& \ad_{\B}(a)\left[x, y\right]-\left[\ad_{\B}(a)x, y\right]-\left[x, \ad_{\B}(a)y\right]-\ad_{\B}(\ad_{\A}(x)a)y-\ad_{\B}(\ad_{\A}(y)a)x \cr 
&=& 
 (l_{\B}+r_{\B})(a)\left[x, y\right]-\left[(l_{\B}+r_{\B})(a)x, y\right]-
 \left[x, (l_{\B}+r_{\B})(a)y\right] \cr 
 &-&(l_{\B}+r_{\B})((l_{\A}+r_{\A})(x)a)y- 
 (l_{\B}+r_{\B})((l_{\A}+r_{\A})(y)a)x 
 \cr
 &=&
 l_{\B}(a)[x, y]+r_{\B}(a)[x, y]-
 \left[l_{\B}(a)x, y\right]-
 \left[r_{\B}(a)x, y\right]-
 \left[x, l_{\B}(a)y\right]\cr
 &-& \left[x, r_{\B}(a)y\right]- 
 l_{\B}(l_{\A}(x)a)y-
 l_{\B}(r_{\A}(x)a)y-
 r_{\B}(l_{\A}(x)a)y-
 r_{\B}(r_{\A}(x)a)y\cr
&-&   l_{\B}(l_{\A}(y)a)x-
 l_{\B}(r_{\A}(y)a)x-
 r_{\B}(l_{\A}(y)a)x-
 r_{\B}(r_{\A}(y)a)x \cr 
 &=& 
 \{r_{\mathcal{A}}(a)[x,y]-r_{\mathcal{B}}(l_{\mathcal{A}}(y)a)x-r_{\mathcal{B}}(l_{\mathcal{A}}(x)a)y-
x(r_{\mathcal{B}}(a)y)-y(r_{\mathcal{B}}(a)x)\}\cr
&+&\{ -l_{\mathcal{B}}(a)(xy)-
l_{\mathcal{B}}(l_{\mathcal{A}}(x)a)y-(r_{\mathcal{B}}(a)x)y-x(l_{\mathcal{B}}(a)y)-r_{\mathcal{B}}(r_{\mathcal{A}}(y)a)x \cr
&-&(l_{\mathcal{B}}(a)x)y-l_{\mathcal{B}}(r_{\mathcal{A}}(x)a)y -l_{\mathcal{B}}(a)(xy)\}+\{-l_{\mathcal{B}}(a)(yx)-
l_{\mathcal{B}}(l_{\mathcal{A}}(y)a)x\cr
&-&(r_{\mathcal{B}}(a)y)x-y(l_{\mathcal{B}}(a)x)-r_{\mathcal{B}}(r_{\mathcal{A}}(x)a)y 
-(l_{\mathcal{B}}(a)y)x-l_{\mathcal{B}}(r_{\mathcal{A}}(y)a)x -l_{\mathcal{B}}(a)(yx)\}= 0.\nonumber
\eeq 
 Thus, we obtain
\beqs
\ad_{\A}(x)\left[a, b\right]-\left[\ad_{\A}(x)a, b\right]
-\left[a, \ad_{\A}(x)b\right]\cr 
-\ad_{\A}(\ad_{\B}(a)x)b-\ad_{\A}(\ad_{\B}(b)x)a=0,  
\eeqs
\beqs
\ad_{\B}(a)\left[x, y\right]-\left[\ad_{\B}(a)x, y\right]
-\left[x, \ad_{\B}(a)y\right]\cr 
-\ad_{\B}(\ad_{\A}(x)a)y-\ad_{\B}(\ad_{\A}(y)a)x=0.  
\eeqs
Hence, $(~\mathcal{G}(\A), \mathcal{G}(\B), \ad_{\A} , \ad_{\B})$ is a matched pair of sub-adjacent JJ algebras $\G(\A)$ and $\G(\B)$.$\cqfd$

\begin{definition}\label{dfnn}
Let $(l, r, V)$ be a bimodule of a pre-JJ algebra $\A,$ where $V$ is a finite dimensional vector 
space. The dual maps $l^{*}, r^{*}$  of the linear maps $l, r,$ are defined, respectively, as: 
$\displaystyle l^{*}, r^{*}: \A \rightarrow \mathfrak{gl}(V^{*})$
 such that:for all $x \in \A, u^{*} \in V^{*}, v \in V,$
\beq\label{dual1}
 l^*: \A & \longrightarrow & \mathfrak{gl}(V^*)  \cr
  x  & \longmapsto & l^*_x:
      \begin{array}{llll}
 V^* &\longrightarrow & V^* \\ 
  u^* & \longmapsto & l^*_x u^*: 
      \begin{array}{llll}
V  &\longrightarrow&  \K \cr
v  &\longmapsto& \left< l^{*}_xu^{*}, v \right>
 := \left<  u^{*}, l_x v\right>, 
      \end{array}
     \end{array}
 \eeq
\beq\label{dual2}
 r^*: \A & \longrightarrow & \mathfrak{gl}(V^*)  \cr
  x  & \longmapsto & r^*_x:
      \begin{array}{llll}
 V^* &\longrightarrow & V^* \\ 
  u^* & \longmapsto & r^*_x u^*: 
      \begin{array}{lllll}
V  &\longrightarrow&  \K \cr
v  &\longmapsto& \left< r^{*}_xu^{*}, v \right>
 := \left<  u^{*}, r_x v\right>. 
      \end{array}
     \end{array}
\eeq
\end{definition}  
\begin{proposition}
Let  $(\A, \cdot)$ be a pre-JJ algebra and   
$\displaystyle l, r: \A \rightarrow \mathfrak{gl}(V)$ be two linear maps, where $V$ is a finite dimensional vector space.
The following conditions are equivalent:
\begin{enumerate}
\item $(l, r, V)$ is a bimodule of $\A$.
\item $(r^{*}, l^{*}, V^{*})$ is a bimodule of
 $\A .$  
\end{enumerate}
\end{proposition}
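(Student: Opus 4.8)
The plan is to deduce the equivalence by transposition. I will use the elementary facts that, for $A,B\in\mathfrak{gl}(V)$ with $V$ finite dimensional, $(AB)^{*}=B^{*}A^{*}$, $(A+B)^{*}=A^{*}+B^{*}$ and $(\lambda A)^{*}=\lambda A^{*}$, together with the canonical identification $V^{**}=V$, under which $A\mapsto A^{*}$ is an involution, so that $l^{**}=l$ and $r^{**}=r$. Because of this involutivity, once the implication $(1)\Rightarrow(2)$ is proved it can be applied to the bimodule $(r^{*},l^{*},V^{*})$ to recover $(1)$ from $(2)$; hence it suffices to prove $(1)\Rightarrow(2)$.

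Assume then that $(l,r,V)$ is a bimodule of $\A$, i.e.\ that \eqref{eqbimodule1} and \eqref{eqbimodule2} hold for all $x,y\in\A$, and put $(l',r'):=(r^{*},l^{*})$. I must verify the two bimodule axioms for $(l',r',V^{*})$, namely $l'_xr'_y+r'_yl'_x+l'_yr'_x+r'_xl'_y=0$ and $l'_{xy}+l'_xl'_y+l'_{yx}+l'_yl'_x=0$ for all $x,y\in\A$. The first reads $r^{*}_xl^{*}_y+l^{*}_yr^{*}_x+r^{*}_yl^{*}_x+l^{*}_xr^{*}_y=0$, and I would obtain it simply by transposing \eqref{eqbimodule1}: since $(l_xr_y)^{*}=r^{*}_yl^{*}_x$, $(r_yl_x)^{*}=l^{*}_xr^{*}_y$, and likewise for the remaining two summands, the transpose of the left-hand side of \eqref{eqbimodule1} is, after reordering, exactly the left-hand side of the first axiom. (Equivalently, pairing both sides against $v\in V$, $u^{*}\in V^{*}$ and pushing the stars onto $V$ via Definition \ref{dfnn} turns the first axiom into $\langle u^{*},(l_xr_y+r_yl_x+l_yr_x+r_xl_y)v\rangle=0$, which is \eqref{eqbimodule1}.)

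The second axiom is the real content. Transposing $l'_{xy}+l'_xl'_y+l'_{yx}+l'_yl'_x=0$ back to $V$, using $(r^{*}_xr^{*}_y)^{*}=r_yr_x$ and $(r^{*}_{xy})^{*}=r_{xy}$, turns it into the right-multiplication analogue $r_{xy}+r_xr_y+r_{yx}+r_yr_x=0$ of \eqref{eqbimodule2}, which is not one of the two defining relations and so has to be derived. I would derive it from the companion operator identity $[l_x,r_y]=-r_{xy}-r_yr_x$, i.e.\ $l_xr_y+r_yl_x+r_yr_x+r_{xy}=0$, which is the bimodule counterpart of the pre-JJ identity $[L_x,R_y]=-R_{xy}-R_yR_x$ of Proposition \ref{one} and is precisely the relation invoked at the close of the proof of Theorem \ref{theoo}: adding it to its $x\leftrightarrow y$ image and cancelling the four mixed terms by \eqref{eqbimodule1} leaves exactly $r_{xy}+r_xr_y+r_{yx}+r_yr_x=0$, and transposing this gives the second axiom for $(r^{*},l^{*},V^{*})$. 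Hence $(r^{*},l^{*},V^{*})$ is a bimodule of $\A$, which proves $(1)\Rightarrow(2)$, and the converse follows from the involutivity remark. The step I expect to be the main obstacle is precisely this second axiom: \eqref{eqbimodule2} constrains only the left action $l$ while the matching axiom for the swapped pair constrains only $r^{*}$, so one cannot merely transpose \eqref{eqbimodule2}; the missing right-multiplication identity must first be produced from \eqref{eqbimodule1}, \eqref{eqbimodule2} and the relation $[l_x,r_y]=-r_{xy}-r_yr_x$ before transposition can be applied, and everything else is routine bookkeeping with $(AB)^{*}=B^{*}A^{*}$.
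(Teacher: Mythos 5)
Your treatment of the first axiom is correct and coincides with the paper's: transposing \eqref{eqbimodule1} term by term via $(AB)^{*}=B^{*}A^{*}$ (the bracket here being the anticommutator) gives $[r^{*}_x,l^{*}_y]=-[r^{*}_y,l^{*}_x]$, and the involutivity of $*$ on a finite dimensional $V$ reduces the equivalence to one implication. The genuine gap is exactly where you predicted it, and your proposed repair does not work from the stated hypotheses. For $(r^{*},l^{*},V^{*})$ to satisfy \eqref{eqbimodule2} one needs $r^{*}_{xy}+r^{*}_xr^{*}_y+r^{*}_{yx}+r^{*}_yr^{*}_x=0$, equivalently $r_{xy}+r_xr_y+r_{yx}+r_yr_x=0$ on $V$. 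You derive this from the identity $l_xr_y+r_yl_x+r_{xy}+r_yr_x=0$; that derivation is algebraically fine, but the identity itself is not available: Definition~\ref{bimodule} imposes only \eqref{eqbimodule1} and \eqref{eqbimodule2}, and neither of these mentions $r_{xy}$, so no relation constraining $r_{xy}$ can be a formal consequence of them. The identity $[l_x,r_y]=-r_{xy}-r_yr_x$ holds for the regular bimodule $(L,R,\A)$ by Proposition~\ref{one}, and it is indeed invoked for abstract bimodules inside the proof of Theorem~\ref{theoo}, but it is an extra hypothesis, not a theorem about bimodules in the sense of Definition~\ref{bimodule}. Concretely, take $l=0$ and $r$ arbitrary: both \eqref{eqbimodule1} and \eqref{eqbimodule2} hold vacuously, yet for the algebra $e_1\cdot e_1=e_2$ with $V=\K$, $r_{e_1}=1$, $r_{e_2}=0$ one gets $r_{e_1e_1}+2r_{e_1}r_{e_1}+r_{e_1e_1}=2\neq 0$, so $(r^{*},0,V^{*})$ is not a bimodule.

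For the record, the paper's own proof does not close this gap either: it transposes \eqref{eqbimodule2} to obtain $l^{*}_{xy}+l^{*}_xl^{*}_y=-l^{*}_{yx}-l^{*}_yl^{*}_x$, which is the second axiom for the ordered pair $(l^{*},r^{*})$, not for the claimed pair $(r^{*},l^{*})$; the required condition on $r^{*}_{xy}$ is never addressed. So your diagnosis --- that \eqref{eqbimodule2} constrains only $l$ while the dual pair needs a pure right-multiplication identity --- is sharper than the paper's treatment, but the conclusion to draw is that the proposition is false as stated relative to Definition~\ref{bimodule}. It becomes true, by essentially your argument (symmetrize $l_xr_y+r_yl_x+r_{xy}+r_yr_x=0$ in $x,y$ and cancel the mixed terms with \eqref{eqbimodule1}, then transpose), only if the definition of bimodule is strengthened to include that mixed--right identity, which is in fact the condition the semidirect-product computation following Definition~\ref{bimodule} actually extracts; one must then also recheck which mixed axiom the dual pair is required to satisfy under the strengthened definition.
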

{\textbf{Proof:}} 
\begin{enumerate}
\item [(1)$\displaystyle \Rightarrow$(2)]
Suppose that $\displaystyle (l, r, V) $ is a bimodule of $(\displaystyle \A, \cdot)$ and  show that 
$\displaystyle (r^*, l^*, V^*)$ is also a bimodule of $(\displaystyle \A, \cdot).$ We have:
\begin{itemize}
\item
\beqs
 \left<(l^*_{xy}+l^*_xl^*_y)u^*, v\right>
 &=& \left<l^*_{xy}u^*, v\right>+\left<(l^*_xl^*_y)u^*, v\right> 
=\left<l_{xy}(v), u^*\right>+\left<l_y(l_x(v)), u^*\right> \\
 &=&
  \left<(l_{xy}+l_yl_x)(v), u^*\right> \ 
 = \left<(-l_{yx}-l_xl_y)(v), u^*\right> \\
 &=& 
 -\left<l_{yx}(v), u^*\right>-\left<(l_xl_y)(v), u^*\right> \\
 &=&-\left<l_{yx}^*u^*, v\right>-\left<(l_y^*l_x^*)u^*, v\right> 
 =-\left<(l_{yx}^*+l_y^*l_x^*)u^*, v\right>.
\eeqs
Therefore,  
\beq\label{ici}
 l_{xy}^*+l_x^*l_y^*= 
 -l_{yx}^*-l_y^*l_x^*, \; \forall \;  x, y \;  \in\A
\eeq
\item 
\beqs
\left<[l^*_x, r_y^*]u^*, v\right> 
&=& \left<l_x^*(r_y^*)u^*, v\right>+
\left<r_y^*(l_x^*)u^*,v\right> 
=\left<l_x(v), r_y^*u^*\right>+
\left<r_yv, l_x^*u^*\right> 
\\
&=&
 \left<r_y(l_x(v)), u^*\right>+
\left<l_x(r_y(v)), u^*\right> 
 =\left<[r_y, l_x]v,u^*\right>  \\
&=& 
\left<-[r_x, l_y]v, u^*\right> = -\left<(r_x(l_y)-l_y(r_x))v, u^*\right> \\
&=&
 \left<-(l^*_yr^*_x+r_x^*l_y^*)u^*, v\right> 
=\left<-[l^*_y, r^*_x]u^*, v\right>
\eeqs
Therefore 
\beq\label{icii}
[l_x^*, r_y^*]=-[l_y^*, r_x^*], \; \forall \; x, y \; \in \A.
\eeq
\end{itemize}
By considering the relations \eqref{ici} and \eqref{icii}, we conclude that $\displaystyle (r^*, l^*, V)$ is a bimodule of $(\A, \cdot).$
\item [(2)$\displaystyle \Rightarrow$(1)] The converse, (i.e., by supposing that  $\displaystyle (r^*, l^*, V)$ is a bimodule of $(\A,\cdot)$ then  $\displaystyle (l, r, V)$ is also a bimodule of $(\A, \cdot)$), can be proved by direct calculations
by using similar relations as for the first part of the proof.$\cqfd$
\end{enumerate}
\begin{theorem}\label{ttheo}
Let $(\A, \cdot )$ be a pre-JJ algebra. Suppose that there exists a pre-JJ algebra structure $"\circ "$ on its dual space $\A^{*}.$ Then,  $(\A, \A^{*}, R_{\cdot}^*, L_{\cdot }^*, R_{\circ}^*, L_{\circ}^*)$ is a matched pair of pre-JJ algebras $\A$ and $\A^{*}$
if and only if \\$(\G(\A), \G(\A^{*}), -\ad^*_{\cdot}, -\ad^{*}_{\circ})$ is a matched pair of JJ algebras $\G(\A)$ and $\G(\A^{*}).$
\end{theorem}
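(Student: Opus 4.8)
The plan is to deduce everything from the general matched-pair theorem for pre-JJ algebras (Theorem~\ref{theoo}) and its sub-adjacent counterpart (Corollary~\ref{corr}), after first checking that the quadruple $(R^*_\cdot, L^*_\cdot, R^*_\circ, L^*_\circ)$ really consists of the bimodules those statements require. First I would note that $(L_\cdot, R_\cdot, \A)$ is a bimodule of $\A$ and $(L_\circ, R_\circ, \A^*)$ is a bimodule of $(\A^*,\circ)$ (the Example following Proposition~\ref{propcentlie}); hence, by the dual-bimodule Proposition --- the one asserting that $(l,r,V)$ is a bimodule of $\A$ if and only if $(r^*,l^*,V^*)$ is --- the pairs $(R^*_\cdot, L^*_\cdot, \A^*)$ and $(R^*_\circ, L^*_\circ, \A)$ are bimodules of $\A$ and of $\A^*$ respectively. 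Consequently the two bimodule hypotheses of Theorem~\ref{theoo} hold automatically for this data, and the claim ``$(\A,\A^*,R^*_\cdot,L^*_\cdot,R^*_\circ,L^*_\circ)$ is a matched pair of pre-JJ algebras'' is \emph{equivalent} to the four identities \eqref{eqq1}--\eqref{eqq4} holding with $l_\A=R^*_\cdot$, $r_\A=L^*_\cdot$, $l_\B=R^*_\circ$, $r_\B=L^*_\circ$ and with $\circ$ the product on $\A^*$.

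For the ``only if'' part, assume $(\A,\A^*,R^*_\cdot,L^*_\cdot,R^*_\circ,L^*_\circ)$ is a matched pair of pre-JJ algebras. Corollary~\ref{corr} then produces a matched pair of the sub-adjacent JJ algebras with structure maps $R^*_\cdot+L^*_\cdot$ and $R^*_\circ+L^*_\circ$. Since $R^*_\cdot+L^*_\cdot=(L_\cdot+R_\cdot)^*$, this map is, in the sign normalization under which the adjoint action of a JJ algebra becomes a representation in the sense of Theorem~\ref{matchJacobyJordan}, precisely the coadjoint representation $-\ad^*_\cdot$ of $\G(\A)$ (and similarly $R^*_\circ+L^*_\circ$ is $-\ad^*_\circ$). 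Hence $(\G(\A),\G(\A^*),-\ad^*_\cdot,-\ad^*_\circ)$ is a matched pair of JJ algebras.

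The ``if'' part is where the special form of the dual bimodules must be used. Assuming that $(\G(\A),\G(\A^*),-\ad^*_\cdot,-\ad^*_\circ)$ is a matched pair of JJ algebras, i.e. the compatibility identities \eqref{ad1}--\eqref{ad2} hold, I would recover \eqref{eqq1}--\eqref{eqq4} as follows: pair each of \eqref{eqq1}--\eqref{eqq4} against a test element of the complementary space, rewrite every occurrence of $L^*_\cdot,R^*_\cdot,L^*_\circ,R^*_\circ$ through Definition~\ref{dfnn} so that the identity becomes one between scalars built from $\cdot$, $\circ$ and the pairing $\langle\cdot,\cdot\rangle$, and then simplify using the four pre-JJ relations of Proposition~\ref{one} applied to \emph{both} products (notably $[L_x,R_y]=-R_{xy}-R_yR_x$, $L_{xy}+L_xL_y=-L_{yx}-L_yL_x$ and $[L_x,R_y]=-[R_x,L_y]$). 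I expect that two of the four equations --- the two of the shape $r(\cdot)[\cdot,\cdot]=\cdots$, namely \eqref{eqq1} and \eqref{eqq3} --- reduce after this rewriting to consequences of the bimodule axioms \eqref{eqbimodule1}--\eqref{eqbimodule2} already in force, while the remaining two, \eqref{eqq2} and \eqref{eqq4}, collapse respectively onto \eqref{ad2} and \eqref{ad1}. Combined with the first paragraph, this yields the matched pair of pre-JJ algebras and closes the equivalence.

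The step I expect to be the main obstacle is precisely this converse bookkeeping: once dualised, each of \eqref{eqq1}--\eqref{eqq4} expands into a sum of six or more terms in $\A$ or $\A^*$, and identifying that sum with (a consequence of) \eqref{ad1}--\eqref{ad2} requires repeated, carefully ordered use of the relations of Proposition~\ref{one} on both algebras, together with consistent handling of the coadjoint sign and of the JJ-module sign $\rho(x\diamond y)=-\rho(x)\rho(y)-\rho(y)\rho(x)$. A secondary, purely notational hazard is that \eqref{eqq1}--\eqref{eqq4} as displayed contain a few mistyped symbols (for instance $r_\A(a)$ in \eqref{eqq3} where $r_\B(a)$ is meant), which should be corrected before the pairing computation is carried out.
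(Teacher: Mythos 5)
Your overall strategy is the same as the paper's: invoke the dual-bimodule proposition so that $(R^*_\cdot,L^*_\cdot,\A^*)$ and $(R^*_\circ,L^*_\circ,\A)$ are automatically bimodules, reduce ``matched pair of pre-JJ algebras'' to the four identities \eqref{eqq1}--\eqref{eqq4} via Theorem~\ref{theoo}, and then match these against the two JJ identities \eqref{ad1}--\eqref{ad2} with $\rho=R^*_\cdot+L^*_\cdot$, $\mu=R^*_\circ+L^*_\circ$. Your ``only if'' direction, routed through Corollary~\ref{corr}, is correct and is in fact a cleaner packaging of what the paper does by hand (the paper re-expands $\rho(x)[a,b]-\cdots$ into braces and observes that each brace is an instance of \eqref{eqq1}--\eqref{eqq4}); the sign discrepancy between $\ad^*$ and $-\ad^*$ is an inconsistency of the paper's own notation, so your handling of it is no worse than the source.

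The gap is in the ``if'' direction, and it is twofold. First, your concrete prediction is wrong: \eqref{eqq1} and \eqref{eqq3} do \emph{not} reduce to consequences of the bimodule axioms \eqref{eqbimodule1}--\eqref{eqbimodule2}. Written out with the dual maps, \eqref{eqq1} reads
$L^*_\cdot(x)(a\circ b+b\circ a)=L^*_\cdot(R^*_\circ(b)x)a+L^*_\cdot(R^*_\circ(a)x)b+a\circ(L^*_\cdot(x)b)+b\circ(L^*_\cdot(x)a)$,
which genuinely intertwines the product $\circ$ on $\A^*$ with the product $\cdot$ on $\A$; the bimodule axioms for $(R^*_\cdot,L^*_\cdot)$ involve only $\cdot$ and cannot produce it. All four of \eqref{eqq1}--\eqref{eqq4} carry compatibility content; what actually happens under dualization is that they pair up with one another (e.g.\ $\langle L^*_\cdot(x)[a,b],y\rangle=\langle[a,b],x\cdot y\rangle$ has the same shape as terms arising from \eqref{eqq4} paired against $\A^*$), and jointly reconstruct \eqref{ad1}--\eqref{ad2}. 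Second, even granting the right pairing scheme, you never carry out the computation — you state it as an expectation and flag it as the main obstacle — so the converse implication is not established. To be fair, the paper's own proof has the same hole: it verifies only that \eqref{eqq1}--\eqref{eqq4} imply \eqref{eqt1}--\eqref{eqt2} (a sum of vanishing braces vanishes) and then asserts the equivalence, which does not follow since the vanishing of a sum of braces does not force each brace to vanish. Your proposed mechanism (pair against test elements, use nondegeneracy of $\langle\cdot,\cdot\rangle$, and use the special form of the dual maps to show each scalar identity coming from \eqref{ad1}--\eqref{ad2} is one coming from \eqref{eqq1}--\eqref{eqq4} and conversely) is the correct way to close this, but as written the plan would stall at the step where you expect two of the four identities to become vacuous.
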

{\textbf{Proof:}}

By considering the Theorem~\ref{theoo}, setting   
$\displaystyle l_{\A}:=R^{*}_{\cdot}, r_{\A}:=L^*_{\cdot}, l_{\B}:=R^*_{\circ},  
r_{\B}:=L^*_{\circ},$ and exploiting the
 Definition~{\ref{dfnmatched}}
  with $\G:=\G(\A),
   \h:=\G(\A^*),\rho:=R_{\cdot}^*+L_{\cdot}^*, \mu:=R_{\circ}^*+L_{\circ}^*,$ and  the relations
\eqref{ad1} and \eqref{ad2}, we have
\begin{itemize}
\item
The equation~$\eqref{eqt1}$ is equivalent to both the equations~$\eqref{eqq1}$ and 
~$\eqref{eqq3},$ i.e.:  
\beqs
&& (R^*_{\cdot}+L^*_{\cdot})(x)[a, b]-
\left[(R^*_{\cdot}+L^*_{\cdot})(x)a, b\right]- 
\left[a,(R^*_{\cdot}+L^*_{\cdot})(x)b\right] \\
&-& (R^*_{\cdot}+L^*_{\cdot})((R^*_{\circ}+L^*_{\circ})(a)x)b- 
(R^*_{\cdot}+L^*_{\cdot})((R^*_{\circ}+L^*_{\circ}(b)x)a\\
&=& R_{\cdot}^*(x)[a, b]+L_{\cdot}^*(x)[a, b]-[R_{\cdot}^*(x)a, b]-[L_{\cdot}^*(x)a,b]-[a, R_{\cdot}^*(x)b]-[a, L_{\cdot}^*(x)b]-R_{\cdot}^*(R_{\circ}^*(a)x)b\cr
&-&R_{\cdot}^*(L_{\circ}^*(a)x)b-R_{\cdot}(R^*_{\circ}(b)x)a-R_{\cdot}^*(L_{\circ}^*(b)x)a-L_{\cdot}^*(L_{\circ}^*(a)x)b-L_{\cdot}^*(R_{\circ}(a)x)b \\
&-& L_{\cdot}^*(L_{\circ}^*(b)x)a-L_{\cdot}^*(R_{\circ}^*(b)x)a \\
&=&R_{\cdot}^*(a\circ b)+R_{\cdot}^*(b\circ a)+L_{\cdot}^*(x)[a, b]-(R_{\cdot}^*(x)a)\circ b-b\circ(R_{\cdot}^*(x)a)-(L_{\cdot}^*(x)a)\circ b-b\circ(L_{\cdot}^*(x)a)\cr
&-&a\circ(R_{\cdot}^*(x)b)-(R_{\cdot}^*(x)b)\circ a-a\circ(L_{\cdot}^*(x)b)-(L_{\cdot}^*(x)b)\circ a-R_{\cdot}^*(R_{\circ}^*(a)x)b-R_{\cdot}^*(L_{\circ}^*(a)x)b\cr
&-&R_{\cdot}(R^*_{\circ}(b)x)a-R_{\cdot}^*(L_{\circ}^*(b)x)a-L_{\cdot}^*(L_{\circ}^*(a)x)b-L_{\cdot}^*(R_{\circ}(a)x)b-L_{\cdot}^*(L_{\circ}^*(b)x)a-L_{\cdot}^*(R_{\circ}^*(b)x)a \\
&=&\{R_{\cdot}^*(a\circ b)-R_{\cdot}^*(R_{\circ}^*(a)x)b-R_{\cdot}^*(L_{\circ}^*(a)x)b-(R_{\cdot}^*(x)a)\circ b-(L_{\cdot}^*(x)a)\circ b-L_{\cdot}^*(L_{\circ}^*(b)x)a\cr
&-&a\circ(R_{\cdot}^*(x)b)\}+ \{R_{\cdot}^*(b\circ a)-R_{\cdot}^*(R_{\circ}^*(b)x)a-R_{\cdot}^*(L_{\circ}^*(b)x)a-(R_{\cdot}^*(x)b)\circ a-(L_{\cdot}^*(x)b)\circ a\cr
&-&b\circ(R_{\cdot}^*(x)a)-L_{\cdot}^*(L_{\circ}^*(a)x)b\}+\{L_{\cdot}^*(x)[a, b]-b\circ(L_{\cdot}^*(x)a)-a\circ(L_{\cdot}^*(x)b)-L_{\cdot}^*(R_{\circ}(a)x)b\cr
&-&L_{\cdot}^*(R_{\circ}(b)x)a\}
=0.
\eeqs
The two first relations in brace on the last equality give zero (see \eqref{eqq2}) and  the last  one brace  also yields  zero (see (\ref{eqq1})). 
\item 
The equation~$\eqref{eqt2}$ is equivalent to both the equations~$\eqref{eqq3}$ and 
~$\eqref{eqq4},$ i.e.:
\beqs
&& (R^*_{\circ}+L^*_{\circ})(a)[x, y]-
\left[(R^*_{\circ}+L^*_{\circ})(a)x, y\right]- 
\left[x,(R^*_{\circ}+L^*_{\circ})(a)y\right] \\
&-&(R^*_{\circ}+L^*_{\circ})((R^*_{\cdot}+L^*_{\cdot})(x)a)y- 
(R^*_{\circ}+L^*_{\circ})((R^*_{\cdot}+L^*_{\cdot}(y)a)x\\
&=&R_{\cdot}^*(a)[x, y]+L_{\cdot}^*(a)[x, y]-[R_{\cdot}^*(a)x, y]-[L_{\cdot}^*(a)x,y]-[x, R_{\cdot}^*(a)y]-[x, L_{\cdot}^*(a)y]-R_{\cdot}^*(R_{\circ}^*(x)a)y\cr
&-&R_{\cdot}^*(L_{\circ}^*(x)a)y-R_{\cdot}(R^*_{\circ}(y)a)x-R_{\cdot}^*(L_{\circ}^*(y)a)x-L_{\cdot}^*(L_{\circ}^*(x)a)y-L_{\cdot}^*(R_{\circ}(x)a)y \\
&-& L_{\cdot}^*(L_{\circ}^*(y)a)x-L_{\cdot}^*(R_{\circ}^*(y)a)x \\
&=&R_{\cdot}^*(x\circ y)+R_{\cdot}^*(y\circ x)+L_{\cdot}^*(a)[x, y]-(R_{\cdot}^*(a)x)\circ y-y\circ(R_{\cdot}^*(a)x)-(L_{\cdot}^*(a)x)\circ y-y\circ(L_{\cdot}^*(a)x)\cr
&-&x\circ(R_{\cdot}^*(a)y)-(R_{\cdot}^*(a)y)\circ x-x\circ(L_{\cdot}^*(a)y)-(L_{\cdot}^*(a)y)\circ x-R_{\cdot}^*(R_{\circ}^*(x)a)y-R_{\cdot}^*(L_{\circ}^*(x)a)y\cr
&-&R_{\cdot}(R^*_{\circ}(y)a)x-R_{\cdot}^*(L_{\circ}^*(y)a)x-L_{\cdot}^*(L_{\circ}^*(x)a)y-L_{\cdot}^*(R_{\circ}(x)a)y-L_{\cdot}^*(L_{\circ}^*(y)a)x-L_{\cdot}^*(R_{\circ}^*(y)a)x \\
&=&\{R_{\cdot}^*(x\circ y)-R_{\cdot}^*(R_{\circ}^*(x)a)y-R_{\cdot}^*(L_{\circ}^*(x)a)y-(R_{\cdot}^*(a)x)\circ y-(L_{\cdot}^*(a)x)\circ y-L_{\cdot}^*(L_{\circ}^*(y)a)x\cr
&-&x\circ(R_{\cdot}^*(a)y)\}+ \{R_{\cdot}^*(y\circ x)-R_{\cdot}^*(R_{\circ}^*(y)a)x-R_{\cdot}^*(L_{\circ}^*(y)a)x-(R_{\cdot}^*(a)y)\circ x-(L_{\cdot}^*(a)y)\circ x\cr
&-&y\circ(R_{\cdot}^*(a)x)-L_{\cdot}^*(L_{\circ}^*(x)a)y\}+\{L_{\cdot}^*(a)[x, y]-y\circ(L_{\cdot}^*(a)x)-x\circ(L_{\cdot}^*(a)y)-L_{\cdot}^*(R_{\circ}(x)a)y\cr
&-&L_{\cdot}^*(R_{\circ}(y)a)x\
=0.
\eeqs
The two first relations in brace on the last equality gives zero (see \eqref{eqq3}) and the last one brace also  leads to zero (see \eqref{eqq4}). 

Therefore, hold the equivalences. $\cqfd$
\end{itemize}


\section{Double constructions of symmetric (pre)Jacobi-Jordan algebras}
   In this section, we define and establish the double constructions of the symmetric JJ
algebras and symmetric pre-JJ algebras. 
\begin{definition}
We call $(\mathcal{J}, B)$ a   double
construction of a symmetric JJ
algebra associated to
$\mathcal{J}_1$ and ${\mathcal J}_1^*$ if it satisfies the conditions
 \begin{enumerate}
 \item[(1)] $ \mathcal{J} = \mathcal{J}_{1}
\oplus \mathcal{J}^{\ast}_{1} $ as the direct sum of vector
spaces; \item[(2)] $\mathcal{J}_1$ and ${\mathcal J}_1^*$ are JJ subalgebras of $\mathcal{J}$; \item[(3)] $B$ is the natural non-degerenate invariant symmetric
bilinear form on $ \mathcal{J}_{1} \oplus \mathcal{J}^{\ast}_{1} $
given by
 \beq \label{quadratic form}
 B(x + a^{\ast}, y + b^{\ast}) = \langle x, b^{\ast} \rangle +  \langle a^{\ast}, y \rangle
\eeq
for all  $x, y \in \mathcal{J}_{1}, a^{\ast}, b^{\ast} \in \mathcal{J}^{\ast}_{1}$
where $ \langle  , \rangle $ is the natural pair between the vector space $ \mathcal{J}_{1} $ and its dual space  $ \mathcal{J}^{\ast}_{1} $.
\end{enumerate}
\end{definition} 

\begin{theorem}
Let $ (\mathcal{J}, \diamond) $ be a JJ algebra. Suppose that there is a JJ algebra algebra structure $ " \bullet " $ on its 
dual space $ \mathcal{J}^{\ast}$. Then, there is a double construction of a symmetric JJ algebra associated to $ (\mathcal{J}, \cdot) $ 
and $ (\mathcal{J}^{\ast}, \circ) $ if and only if $ (\mathcal{J}, \mathcal{J}^{\ast}, L^{\ast}_{\diamond}, L^{\ast}_{\bullet})$ 
is a matched pair of JJ algebras.
\end{theorem}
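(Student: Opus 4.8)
The plan is to prove the two implications separately, the link between them being the elementary fact that for the pairing form $B$ of \eqref{quadratic form} on $\mathcal{J}\oplus\mathcal{J}^{*}$, invariance of $B$ forces the cross-multiplications of the ambient algebra to be the coadjoint actions $L^{*}_{\diamond}$ and $L^{*}_{\bullet}$, and conversely. I will freely use Theorem~\ref{matchJacobyJordan} and the Corollary following Proposition~\ref{prop2}; the latter, applied to $(\mathcal{J},\diamond)$ and to $(\mathcal{J}^{*},\bullet)$, already tells us that $L^{*}_{\diamond}\colon\mathcal{J}\to\mathfrak{gl}(\mathcal{J}^{*})$ and $L^{*}_{\bullet}\colon\mathcal{J}^{*}\to\mathfrak{gl}(\mathcal{J})$ are representations of the sub-adjacent JJ algebras $\mathcal{G}(\mathcal{J})$ and $\mathcal{G}(\mathcal{J}^{*})$.

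For the ``if'' part, assume $(\mathcal{J},\mathcal{J}^{*},L^{*}_{\diamond},L^{*}_{\bullet})$ is a matched pair. Theorem~\ref{matchJacobyJordan}, applied with $\mathcal{G}=\mathcal{J}$, $\mathcal{H}=\mathcal{J}^{*}$, $\rho=L^{*}_{\diamond}$, $\mu=L^{*}_{\bullet}$, endows $\mathcal{J}\oplus\mathcal{J}^{*}$ with a JJ product $\ast$ in which $\mathcal{J}$ and $\mathcal{J}^{*}$ are subalgebras, so that conditions (1) and (2) of the double-construction definition hold. The form $B$ of \eqref{quadratic form} is symmetric because the natural pairing is, and non-degenerate because a vector in its kernel would pair trivially with both $\mathcal{J}$ and $\mathcal{J}^{*}$. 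Thus it only remains to verify the invariance $B\big((x+a^{*})\ast(y+b^{*}),\,z+c^{*}\big)=B\big(x+a^{*},\,(y+b^{*})\ast(z+c^{*})\big)$: substituting the product of Theorem~\ref{matchJacobyJordan} and the expression \eqref{quadratic form}, each side becomes a sum of twelve terms of the form $\langle u,L^{*}_{\diamond}(w)v\rangle$ or $\langle u,L^{*}_{\bullet}(w)v\rangle$, and the defining relations $\langle L^{*}_{\diamond}(x)u^{*},v\rangle=\langle u^{*},x\diamond v\rangle$, $\langle L^{*}_{\bullet}(a^{*})v,w^{*}\rangle=\langle v,a^{*}\bullet w^{*}\rangle$ together with commutativity of $\diamond$ and $\bullet$ match them up term by term (e.g.\ $\langle x\diamond y,c^{*}\rangle$ with $\langle x,L^{*}_{\diamond}(y)c^{*}\rangle$, and $\langle a^{*}\bullet b^{*},z\rangle$ with $\langle a^{*},L^{*}_{\bullet}(b^{*})z\rangle$, and so on). Hence $(\mathcal{J}\oplus\mathcal{J}^{*},B)$ is a double construction.

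For the ``only if'' part, let $(\mathcal{J}\oplus\mathcal{J}^{*},B)$ be such a double construction. Since $\mathcal{J}$ and $\mathcal{J}^{*}$ are subalgebras we have $x\diamond y\in\mathcal{J}$ and $a^{*}\bullet b^{*}\in\mathcal{J}^{*}$; writing, for $x\in\mathcal{J}$ and $a^{*}\in\mathcal{J}^{*}$, $x\ast a^{*}=\mu(a^{*})x+\rho(x)a^{*}$ with $\mu(a^{*})x\in\mathcal{J}$ and $\rho(x)a^{*}\in\mathcal{J}^{*}$, and using commutativity of $\ast$, one sees that $\rho\colon\mathcal{J}\to\mathfrak{gl}(\mathcal{J}^{*})$ and $\mu\colon\mathcal{J}^{*}\to\mathfrak{gl}(\mathcal{J})$ are linear and that $\ast$ has precisely the matched-pair shape of Theorem~\ref{matchJacobyJordan}. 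Specializing the invariance of $B$ to $B(x\diamond y,c^{*})=B(x,y\ast c^{*})$ and to $B(a^{*}\bullet b^{*},z)=B(a^{*},b^{*}\ast z)$ gives $\langle x\diamond y,c^{*}\rangle=\langle x,\rho(y)c^{*}\rangle$ and $\langle a^{*}\bullet b^{*},z\rangle=\langle a^{*},\mu(b^{*})z\rangle$ for all arguments, whence $\rho=L^{*}_{\diamond}$ and $\mu=L^{*}_{\bullet}$ --- in particular these are representations. Finally, $\ast$ being a JJ product, its cyclic Jacobi sum, expanded exactly as in the proof of Theorem~\ref{matchJacobyJordan}, vanishes; projecting onto the direct summands $\mathcal{J}$ and $\mathcal{J}^{*}$ and putting $b^{*}=c^{*}=0$ (which annihilates every block except the one lying in $\mathcal{J}$ and built from a single $\mathcal{J}^{*}$-vector, namely the left side of \eqref{eqt2}) yields \eqref{eqt2}, while putting $y=z=0$ similarly yields \eqref{eqt1}. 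Therefore $(\mathcal{J},\mathcal{J}^{*},L^{*}_{\diamond},L^{*}_{\bullet})$ is a matched pair.

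Both directions reduce to bookkeeping once the coadjoint identification is in place; the one place where attention is needed is this last specialization, where one must check that setting two of the arguments to zero really does kill all the blocks in the expanded Jacobi identity except a single copy of \eqref{eqt2} (resp.\ \eqref{eqt1}) --- the ``Jacobi'' blocks of $\mathcal{J}$ and $\mathcal{J}^{*}$ and the ``representation'' blocks for $\rho$ and $\mu$ all either vanish under the specialization or hold automatically, so that nothing spurious survives.
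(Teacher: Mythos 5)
Your proposal is correct and its core is the same as the paper's: invoke Theorem~\ref{matchJacobyJordan} to get the JJ structure on $\mathcal{J}\oplus\mathcal{J}^{*}$ from the matched-pair data, and check invariance of $B$ by pairing-adjointness term by term. The difference is one of completeness rather than of method. The paper's own proof only argues the direction ``matched pair $\Rightarrow$ double construction'': it writes down the product \eqref{eqn:standbracket}, cites the ``if and only if'' of Theorem~\ref{matchJacobyJordan}, and verifies $B$-invariance; it tacitly assumes that any double construction carries exactly that product. You supply the missing half explicitly: starting from an arbitrary double construction, you decompose $x\ast a^{*}=\mu(a^{*})x+\rho(x)a^{*}$, use invariance of $B$ in the form $\langle x\diamond y,c^{*}\rangle=\langle x,\rho(y)c^{*}\rangle$ and $\langle a^{*}\bullet b^{*},z\rangle=\langle a^{*},\mu(b^{*})z\rangle$ to force $\rho=L^{*}_{\diamond}$ and $\mu=L^{*}_{\bullet}$, and then project the Jacobi identity of $\ast$ onto the two summands (with the specializations $b^{*}=c^{*}=0$ and $y=z=0$) to recover \eqref{eqt1}--\eqref{eqt2}; I checked that these specializations do isolate the matched-pair blocks, the remaining blocks being the Jacobi identities of $\mathcal{J}$, $\mathcal{J}^{*}$ and the representation conditions for $L^{*}_{\diamond}$, $L^{*}_{\bullet}$, all of which hold. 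So your write-up is not only consistent with the paper but repairs the one place where its proof is genuinely incomplete.
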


\begin{proof}  
From Theorem \ref{matchJacobyJordan}, we know that
$(\mathcal{J}\oplus\mathcal{J}^{\ast},\ast)$
is a JJ algebra, where $\ast$ is
given by
\begin{equation}\label{eqn:standbracket}
  (x+ a^{\ast})\ast(y+ b^{\ast})=x\diamond y + L^*_\diamond(x)b^{\ast}+ L^*_\diamond(y)a^{\ast} +  a^{\ast}\bullet b^{\ast} + L^*_\bullet(a^{\ast})y + L^*_\bullet(b^{\ast})x
\end{equation} if and only if $ (\mathcal{J}, \mathcal{J}^{\ast}, L^{\ast}_{\diamond}, L^{\ast}_{\bullet})$. Furthermore, B is invariant  with the product $\ast$, that is  $B[(x + a^{\ast})\ast 
(y + b^{\ast}),(z + c^{\ast})] = B[(x + a^{\ast}), (y + b^{\ast})\ast (z + c^{\ast})] $.  Indeed, we have
\beqs
B[(x + a^{\ast})\ast (y + b^{\ast}), (z + c^{\ast})]   
                                                               &=& \langle x\cdot y, c^{\ast} \rangle + 
                                                     \langle c^{\ast} \circ a^{\ast} , y \rangle + \langle b^{\ast} \circ c^{\ast} , x \rangle \cr
                                                             && + \langle a^{\ast} \circ b^{\ast} , z \rangle 
                                                             + \langle z\cdot x , b^{\ast} \rangle +
                                                              \langle y\cdot z , a^{\ast} \rangle\cr
                                      &=&   B[x + a^{\ast} ,(y + b^{\ast}) \ast (z + c^{\ast})]
\eeqs 
\end{proof}

\begin{definition}
We call $(\mathcal{A}, B)$ a   double
construction of a symmetric pre-JJ
algebra associated to
$\mathcal{A}_1$ and ${\mathcal A}_1^*$ if it satisfies the conditions
 \begin{enumerate}
 \item[(1)] $ \mathcal{A} = \mathcal{A}_{1}
\oplus \mathcal{A}^{\ast}_{1} $ as the direct sum of vector
spaces; \item[(2)] $\mathcal{A}_1$ and ${\mathcal A}_1^*$ are JJ subalgebras of $\mathcal{A}$; \item[(3)] $B$ is the natural non-degerenate invariant symmetric
bilinear form on $ \mathcal{A}_{1} \oplus \mathcal{A}^{\ast}_{1} $
given by
 \beq \label{quadratic form}
 B(x + a^{\ast}, y + b^{\ast}) = \langle x, b^{\ast} \rangle +  \langle a^{\ast}, y \rangle
\eeq
for all  $x, y \in \mathcal{A}_{1}, a^{\ast}, b^{\ast} \in \mathcal{A}^{\ast}_{1}$
where $ \langle  , \rangle $ is the natural pair between the vector space $ \mathcal{A}_{1} $ and its dual space  $ \mathcal{A}^{\ast}_{1} $.
\end{enumerate}
\end{definition} 
\begin{theorem} \label{thmdouble}
Let $ (\mathcal{A}, \cdot) $ be a pre-JJ algebra. Suppose that there is a pre-JJ algebra algebra structure $ " \circ " $ on its 
dual space $ \mathcal{A}^{\ast}$. Then, there is a double construction of a symmetric pre-JJ algebra associated to $ (\mathcal{A}, \cdot) $ 
and $ (\mathcal{A}^{\ast}, \circ) $ if and only if $ (\A, \A^*, R_{\cdot}^*, L_{\cdot}^*, R_{\circ}^*, L_{\circ}^*)$ 
is a matched pair of pre-JJ algebras.
\end{theorem}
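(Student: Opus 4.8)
The plan is to mimic, for pre-JJ algebras, the proof of the JJ double-construction theorem given just above: everything gets reduced to Theorem~\ref{theoo} (matched pairs of pre-JJ algebras) together with an invariance computation for the bilinear form $B$. Write a general element of $\A\oplus\A^{*}$ as $x+a^{*}$ with $x\in\A$, $a^{*}\in\A^{*}$, and note that $B(x+a^{*},y+b^{*})=\langle x,b^{*}\rangle+\langle a^{*},y\rangle$ is manifestly symmetric and non-degenerate, so on the $B$-side the only real content is invariance under the product.

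For the direction ``matched pair $\Rightarrow$ double construction'': given that $(\A,\A^{*},R^{*}_{\cdot},L^{*}_{\cdot},R^{*}_{\circ},L^{*}_{\circ})$ is a matched pair, Theorem~\ref{theoo} immediately furnishes a pre-JJ product on $\A\oplus\A^{*}$, namely
\[
(x+a^{*})\ast(y+b^{*})=\bigl(x\cdot y+R^{*}_{\circ}(a^{*})y+L^{*}_{\circ}(b^{*})x\bigr)+\bigl(a^{*}\circ b^{*}+R^{*}_{\cdot}(x)b^{*}+L^{*}_{\cdot}(y)a^{*}\bigr).
\]
Since $\ast$ restricts to $\cdot$ on $\A$ and to $\circ$ on $\A^{*}$, conditions (1)--(2) of the definition hold. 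For invariance I would use bilinearity of $B$ and of $\ast$ to reduce $B\bigl((x+a^{*})\ast(y+b^{*}),z+c^{*}\bigr)=B\bigl(x+a^{*},(y+b^{*})\ast(z+c^{*})\bigr)$ to the eight triples in which each argument is a pure $\A$- or a pure $\A^{*}$-vector; the two homogeneous triples give $0=0$ because $B$ pairs $\A$ only against $\A^{*}$, and each of the six mixed triples collapses, after unwinding $\ast$ and applying the defining relations $\langle L^{*}_{\cdot}(x)u^{*},v\rangle=\langle u^{*},x\cdot v\rangle$, $\langle R^{*}_{\cdot}(x)u^{*},v\rangle=\langle u^{*},v\cdot x\rangle$ (and their $\circ$-analogues on $\A^{*}$) from Definition~\ref{dfnn}, to a tautology.

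For the converse ``double construction $\Rightarrow$ matched pair'': here $\A\oplus\A^{*}$ is a pre-JJ algebra with $\A$ and $\A^{*}$ as subalgebras, so the converse half of Theorem~\ref{theoo} produces linear maps $l_{\A},r_{\A}\colon\A\to\mathfrak{gl}(\A^{*})$ and $l_{\B},r_{\B}\colon\A^{*}\to\mathfrak{gl}(\A)$ forming bimodules and satisfying \eqref{eqq1}--\eqref{eqq4}, with $x\ast a^{*}=l_{\A}(x)a^{*}+r_{\B}(a^{*})x$ and $a^{*}\ast x=l_{\B}(a^{*})x+r_{\A}(x)a^{*}$. The task is then only to recognise these four maps as the dual multiplications: testing $B$-invariance on the mixed triples, $B(a^{*}\ast x,y)=B(a^{*},x\cdot y)$ forces $r_{\A}=L^{*}_{\cdot}$, then $B(x\ast a^{*},y)=B(x,a^{*}\ast y)$ forces $l_{\A}=R^{*}_{\cdot}$, then $B(x\ast a^{*},b^{*})=B(x,a^{*}\circ b^{*})$ forces $r_{\B}=L^{*}_{\circ}$, and $B(a^{*}\ast x,b^{*})=B(a^{*},x\ast b^{*})$ forces $l_{\B}=R^{*}_{\circ}$ (each time via equality of the two pairings against an arbitrary vector of the complementary space). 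Substituting back into \eqref{eqq1}--\eqref{eqq4} yields that $(\A,\A^{*},R^{*}_{\cdot},L^{*}_{\cdot},R^{*}_{\circ},L^{*}_{\circ})$ is a matched pair, completing the proof.

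I expect the only genuine obstacle to be organizational: keeping straight which component of $\A\oplus\A^{*}$ each summand of $\ast$ lives in, the placement conventions for $L^{*}$ and $R^{*}$, and the identification $\A^{**}\cong\A$ that lets $R^{*}_{\circ},L^{*}_{\circ}$ act on $\A$ --- a single slot or sign slip derails the verification. No idea beyond Theorem~\ref{theoo} is required; in particular the matched-pair axioms \eqref{eqq1}--\eqref{eqq4} for the dual maps come for free once $\A\oplus\A^{*}$ is known to be a pre-JJ algebra, so they never need to be checked by hand. (One should also read ``pre-JJ subalgebras'' for ``JJ subalgebras'' in condition (2) of the definition of the double construction.)
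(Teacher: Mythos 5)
Your proposal is correct and follows essentially the same route as the paper: invoke Theorem~\ref{theoo} (with $l_{\A}=R^{*}_{\cdot}$, $r_{\A}=L^{*}_{\cdot}$, $l_{\B}=R^{*}_{\circ}$, $r_{\B}=L^{*}_{\circ}$) to obtain the product $\ast$ on $\A\oplus\A^{*}$, and then verify invariance of $B$ by unwinding the duality pairings $\langle R^{*}_{\cdot}(x)b^{*},z\rangle=\langle b^{*},z\cdot x\rangle$, $\langle L^{*}_{\circ}(b^{*})x,c^{*}\rangle=\langle x,b^{*}\circ c^{*}\rangle$, etc. You in fact go slightly further than the paper, which only writes out the ``matched pair $\Rightarrow$ double construction'' direction: your identification of $l_{\A},r_{\A},l_{\B},r_{\B}$ with the dual multiplications via $B$-invariance on mixed triples supplies the converse that the paper leaves implicit.
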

{\textbf{Proof:}}

By considering that $(\A, \A^*, R_{\cdot}^*, L_{\cdot}^*; R_{\circ}^*, L_{\circ}^* )$ is a matched pair of pre-JJ algebras, 
it follows that the bilinear product $\ast$ defined in the Theorem~\ref{theoo} 
is pre-JJ  on the direct sum of underlying vectors spaces,  $\A \oplus \A^*.$ 
We have 
$\forall x, y, z \in \A; a, b, c \in \A^{*}$.
\beqs
\bb_{\A} ((x+a)\ast(y+b), z+c ) 
&=& \left< xy+R^*_{\circ}(a)y+L_{\circ}^*(b)x, c  \right>+
\left< z, a \circ b + R^*_{\cdot}(x)b+L^*_{\cdot}(y)a  \right> \cr
&=& \left<xy, c  \right>+ \left<R^*_{\circ}(a)y, c \right>+ 
\left< L^*_{\circ}(b)x , c\right> +
\left<z, a \circ b  \right>
+\left<z, R^*_{\cdot}(x)b \right>\cr&+&
\left<z, L^*_{\cdot}(y)a   \right> = 
\left<  xy, c \right>+
\left<y, R_a(c) \right>+
\left<x, L_b(c)  \right>+
\left< z, a \circ b  \right>\cr &+&
\left< R_x(z) , b \right>+
\left< L_y(z) , a \right> 
=
\left< xy, c\right>+
\left<  y, c\circ a \right>\cr &+&
\left< x , b \circ c  \right>+ 
\left< z, a \circ b  \right>+
\left<zx, b   \right> +
\left<yz, a \right>.
\eeqs

\beqs
\bb_{\A}\left((x+a), (y+b)\ast(z+c)\right)
&=& \left<x, b \circ c +R^*_{\cdot}(y)c+L^*_{\cdot}(z)b \right>+ 
<  yz+R^*_{\circ}(b)z\cr 
&+& L_{\circ}^*(c)y   ,a   > +
\left<x, b \circ c \right>+
\left<x, R^*_{\cdot}(y)c  \right>+
\left<x , L^*_{\cdot}(z)b  \right> \cr
&+&
\left< yz , a\right>+ 
\left<R^*_{\circ}(b)z, a   \right>+
\left<L^*_{\circ}(c)y, a  \right> \cr
&=&
\left<x, b\circ c   \right>+
\left<R_y(x),  c \right>+
\left<L_z(x), b  \right> \cr
&+&
\left< yz, a \right>+
\left<z, R_b(a) \right>+
\left<y, L_c(a) \right> \cr
&=&
\left<x, b\circ c  \right>+
\left<xy, c  \right>+
\left<zx, b  \right>+ 
\left<yz, a  \right>\cr &+&
\left<z, a \circ b \right>+
\left<y, c \circ a  \right>.
\eeqs
Therefore, the following relation 
\beq
\bb_{\A} ((x+a)\ast(y+b), (z+c) ) =\bb_{\A}\left((x+a), (y+b)\ast(z+c)\right)
\eeq 
holds,
which expresses the invariance of the standard bilinear form on $\A \oplus \A^*.$ 
Therefore, $(A\oplus \A^*, B)$ is the standard double construction of the pre-JJ algebras $\A$ and  $\A^*.$
$\cqfd$

\begin{proposition}\label{proposition_11}
Let $(\A, \cdot)$ be a pre-JJ algebra and $(\A^*, \circ)$ be a pre-JJ algebra structure on its dual space $\A^*.$ Then the following conditions are equivalent:
\begin{enumerate}
\item $(\A\oplus\A^*, B)$ is the standard double construction of considered pre-JJ algebras; 

\item $(\G(\A), \G(\A^*),-\ad_{\cdot}^*, -\ad_{\circ}^*)$ is a  matched pair of sub-adjacent JJ algebras;
\item $(\A, \A^*, R_{\cdot}^*, L_{\cdot}^*, R_{\circ}^*, L_{\circ}^*)$ is a matched pair of pre-JJ algebras.
\end{enumerate}
\end{proposition}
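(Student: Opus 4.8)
The statement is a corollary of the two theorems already proved in Sections~3 and~4, and the strategy is simply to close the triangle of implications by invoking them. Concretely, I would prove $(1)\Leftrightarrow(3)$ using Theorem~\ref{thmdouble} and $(2)\Leftrightarrow(3)$ using Theorem~\ref{ttheo}, and then conclude that all three are mutually equivalent.

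For $(1)\Leftrightarrow(3)$: apply Theorem~\ref{thmdouble} to the pre-JJ algebra $(\A,\cdot)$ together with the given pre-JJ structure $\circ$ on $\A^{*}$. The only thing to verify is that statement $(1)$ — ``$(\A\oplus\A^{*},B)$ is the standard double construction'' — is literally the assertion produced by that theorem: that $B$ is the canonical pairing form $B(x+a^{*},y+b^{*})=\langle x,b^{*}\rangle+\langle a^{*},y\rangle$, that $\A$ and $\A^{*}$ sit inside $\A\oplus\A^{*}$ as subalgebras via the product $\ast$ built from the bimodule data $(l_{\A},r_{\A},l_{\B},r_{\B})=(R^{*}_{\cdot},L^{*}_{\cdot},R^{*}_{\circ},L^{*}_{\circ})$ in Theorem~\ref{theoo}, and that this $\ast$ makes $B$ invariant — all of which was checked in the proof of Theorem~\ref{thmdouble}. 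So $(1)$ holds exactly when $(\A,\A^{*},R^{*}_{\cdot},L^{*}_{\cdot},R^{*}_{\circ},L^{*}_{\circ})$ is a matched pair of pre-JJ algebras, i.e. exactly when $(3)$ holds.

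For $(2)\Leftrightarrow(3)$: this is precisely Theorem~\ref{ttheo}, which says that $(\A,\A^{*},R^{*}_{\cdot},L^{*}_{\cdot},R^{*}_{\circ},L^{*}_{\circ})$ is a matched pair of pre-JJ algebras if and only if $(\G(\A),\G(\A^{*}),-\ad^{*}_{\cdot},-\ad^{*}_{\circ})$ is a matched pair of the sub-adjacent JJ algebras. Here one should recall that $\ad_{\cdot}=L_{\cdot}+R_{\cdot}$ by definition, so $\ad^{*}_{\cdot}=L^{*}_{\cdot}+R^{*}_{\cdot}$, and the minus sign is forced by the JJ-module axiom $\rho(x\diamond y)=-\rho(x)\rho(y)-\rho(y)\rho(x)$ together with Proposition~\ref{propcentlie} (the map $l+r$ is a representation of the sub-adjacent JJ algebra); the forward implication $(3)\Rightarrow(2)$ is also recorded independently in Corollary~\ref{corr}.

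Combining the two equivalences gives $(1)\Leftrightarrow(3)\Leftrightarrow(2)$, which is the claim. I do not expect any genuine obstacle here: the only delicate point is bookkeeping — making sure the labelling conventions for the left/right actions fixed in Definitions~\ref{dfnmatched} and~\ref{dfnn} and the relation $\ad=L+R$ are applied consistently, so that passing between the pre-JJ level (data $R^{*}_{\cdot},L^{*}_{\cdot},R^{*}_{\circ},L^{*}_{\circ}$) and the JJ level (data $-\ad^{*}_{\cdot},-\ad^{*}_{\circ}$) introduces neither a spurious sign nor an extra term. No computation beyond those already carried out in Theorems~\ref{theoo}, \ref{ttheo}, \ref{thmdouble} and Corollary~\ref{corr} is required.
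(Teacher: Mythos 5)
Your proposal is correct and follows exactly the paper's own argument: the equivalence $(2)\Leftrightarrow(3)$ is Theorem~\ref{ttheo}, the equivalence $(1)\Leftrightarrow(3)$ is Theorem~\ref{thmdouble}, and the three statements are then mutually equivalent by transitivity. The additional remarks on sign conventions and Corollary~\ref{corr} are sensible but not needed beyond what those theorems already establish.
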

{\textbf{Proof:}}

From Theorem~\ref{ttheo},
$\displaystyle (2) \Longleftrightarrow (3),$ while from  Theorem~\ref{thmdouble} shows that 
$(1) \Longleftrightarrow (3).$ Then $(1) \Longleftrightarrow (2).$
$\cqfd$
%
\section{Computations in dimension two}

In this section, we investigate the classification of 2-dimensional complex pre-JJ algebras and some  double constructions. 

Let $\mathcal{A} $ be a pre-JJ algebra such that there is a pre-JJ structure $"\circ"$ on its dual space $\mathcal{A}^{\ast}$ spanned by $\{e_1,e_2\}$ and $\{e_1^{\ast},e_2^{\ast}\}$ respectively. Formula (\ref{CSA}) leads to the following relations:\\
\beq
(e_i\cdot e_j)\cdot e_k+e_i\cdot (e_j\cdot e_k)=-(e_j\cdot e_i)\cdot e_k -e_j\cdot (e_i\cdot e_k),\quad\quad where\quad i,j,k=1,2.
\eeq
Let $e_1\cdot e_1=a_1e_1+a_2e_2$, $e_1\cdot e_2=b_1e_1+b_2e_2$, $e_2\cdot e_1=c_1e_1+c_2e_2$, $e_2\cdot e_2=d_1e_1+d_2e_2$ where $a_1, a_2, b_1, b_2, c_1, c_2 d_1, d_2 \in \C$.

\begin{proposition}
There are four non-isomorphic 2-dimensional pre-JJ
algebras $\mathcal{A} $ given by the following:
\beq\label{class}
e_i\cdot e_j=0,\quad\quad e_1\cdot e_1=e_2,\quad\quad e_2\cdot e_1=e_2,\quad\quad e_2\cdot e_2=e_1.
\eeq
\end{proposition}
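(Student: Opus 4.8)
The plan is to turn the problem into a polynomial system in the eight structure constants and then classify its solutions up to change of basis. With $e_1\cdot e_1=a_1e_1+a_2e_2$, $e_1\cdot e_2=b_1e_1+b_2e_2$, $e_2\cdot e_1=c_1e_1+c_2e_2$, $e_2\cdot e_2=d_1e_1+d_2e_2$, I would first note that, since $\car\K\neq 2$, specializing \eqref{CSA} to $x=y$ gives $(x,x,z)_{-1}=0$ for all $x,z$; because the antiassociator $(x,y,z)_{-1}=(x\cdot y)\cdot z+x\cdot(y\cdot z)$ is trilinear, it therefore suffices to impose $(e_1,e_1,e_k)_{-1}=0$, $(e_2,e_2,e_k)_{-1}=0$ and $(e_1,e_2,e_k)_{-1}+(e_2,e_1,e_k)_{-1}=0$ for $k=1,2$. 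Expanding these on basis triples and reading off the $e_1$- and $e_2$-components produces twelve quadratic scalar equations in $a_1,\dots,d_2$.

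Next I would solve this system. The cleanest organization is a case split according to which of the basis products $e_i\cdot e_j$ vanish, equivalently according to the rank of the multiplication data. The diagonal equations $(e_i,e_i,e_k)_{-1}=0$ are quite rigid — e.g. $(e_1,e_1,e_1)_{-1}=0$ already reads $2a_1^2+a_2(b_1+c_1)=0$ and $a_2(2a_1+b_2+c_2)=0$ — so they collapse most of the parameter space, after which the mixed relations $(e_1,e_2,e_k)_{-1}=-(e_2,e_1,e_k)_{-1}$ pin down the remaining freedom. The output of this step is a short explicit list of pre-JJ products on $\A$.

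Then I would pass to isomorphism classes, i.e. to orbits under $GL_2(\C)$ acting by change of basis on $\A$, using discrete invariants to both merge and separate members of the list: the dimension of $\A\cdot\A$, the dimensions of the left and right annihilators, the nilpotency index, and whether the product is (anti)associative or commutative. This shows that every normal form produced above reduces to one of the four products displayed in \eqref{class}, and that those four are pairwise non-isomorphic: the zero product is singled out by $\A\cdot\A=0$, while the remaining three all have $\dim(\A\cdot\A)=1$ and are distinguished by whether $\A\cdot\A$ lies in the annihilator and by the nilpotency index.

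The main obstacle is the completeness and non-redundancy of the casework: the quadratic system has enough slack that it is easy to miss a component or, conversely, to record two normal forms that are actually conjugate under $GL_2(\C)$. Indexing the case analysis by an intrinsic quantity (the rank of the multiplication, or which basis products vanish) and normalizing each surviving case by a deliberately chosen basis change is what keeps the argument finite and checkable. $\cqfd$
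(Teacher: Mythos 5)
Your setup is the right one for the statement as literally phrased: you impose the pre-JJ identity \eqref{CSA} itself (via $(e_i,e_i,e_k)_{-1}=0$ and $(e_1,e_2,e_k)_{-1}+(e_2,e_1,e_k)_{-1}=0$, twelve scalar equations), whereas the paper's proof actually imposes the strictly stronger condition of antiassociativity, $(e_ie_j)e_k=-e_i(e_je_k)$, and classifies those. But what you have written is a plan, not a proof: the entire content of the proposition lives in the two steps you describe but do not execute, namely solving the quadratic system and reducing the solution set modulo $GL_2(\C)$. "The diagonal equations collapse most of the parameter space" and "this shows that every normal form reduces to one of the four products" are assertions of the conclusion, not derivations of it. You yourself identify completeness of the casework as the main obstacle; the proposal does not overcome it.

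Worse, if you actually carried out your plan you would not arrive at the displayed list, so the final step cannot be waved through. Two concrete checks: (i) for the product $e_2\cdot e_1=e_2$ (all other products zero) one has
\begin{equation*}
(e_2,e_1,e_1)_{-1}+(e_1,e_2,e_1)_{-1}=(e_2\cdot e_1)\cdot e_1+e_2\cdot(e_1\cdot e_1)+(e_1\cdot e_2)\cdot e_1+e_1\cdot(e_2\cdot e_1)=e_2\neq 0,
\end{equation*}
so this algebra does not satisfy \eqref{CSA} at all (nor antiassociativity, so it should not survive the paper's own system either: the equation $c_1a_2+c_2^2+a_1c_2+a_2d_2=0$ forces $c_2=0$ there); (ii) the algebras $e_1\cdot e_1=e_2$ and $e_2\cdot e_2=e_1$ are isomorphic via the basis swap $e_1\leftrightarrow e_2$, so they cannot both appear on a list of pairwise non-isomorphic classes. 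Your proposed invariants do not separate them: both have $\dim(\A\cdot\A)=1$ with $\A\cdot\A$ contained in the annihilator. So the gap is twofold: the computation that constitutes the proof is absent, and the target you are asserting it reaches is not the correct answer to the equations you set up. To repair this you would have to actually solve your twelve equations, list the resulting products, and redo the isomorphism analysis from scratch rather than matching against \eqref{class}.
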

\begin{proof}
Let $\A$ be a 2-dimensional antiassociative
algebra  with basis $\{e_1,e_2\}$. Suppose $x,y,z\in \A $ such that $x=x_1e_1+x_2e_2$, $y=y_1e_1+y_2e_2$ and $z=z_1e_1+z_2e_2$ with $x_1,x_2,y_1,y_2,z_1,z_2\in \C$. By antiassociativity and ignoring the coefficients, we get the relations $$(e_i\cdot e_j)\cdot e_k=-e_i\cdot(e_j\cdot e_k),\quad i,j,k=1,2.$$
Setting $e_1e_1 =a_1e_1+a_2e_2$, $e_1e_2=b_1e_1+b_2e_2$, $e_2e_1=c_1e_1+c_2e_2$ and $e_2e_2=d_1e_1+d_2e_2$, in the previous relations one have  the following eight relations equivalent to a system of 32 equations:
\beq
\begin{cases}
~ 2a_1^2+a_2c_1+a_2b_1=0,\\ ~2a_1a_2+a_2c_2+a_2b_2=0,,\\
~a_1b_1+a_2d_1+b_1a_1+b_2b_1=0,\\ ~a_1b_2+a_2d_2+b_1a_2+b^2=0,\\
~b_1a_1+b_2c_1+c_1a_1+c_2b_1=-c_1a_1-c_2c_1-a_1c_1-a_2d_1,\\ ~b_1a_2+b_2c_2+c_1a_2+c_2b_2=-c_1a_2-c_2^2-a_1c_2-a_2d_2,\\
~b_1^2+b_2d_1+d_1a_1+d_2b_1=-c_1b_1-c_2d_1-b_1c_1-b_2d_1,\\
~b_1b_2+b_2d_2+d1a_2+d_2b_2=-c_1b_2-c_2d_2-b_1c_2-b2d_2,\\
~c_1a_1+c_2c_1+a_1c_1+a_2d_1=-a_1b_1-b_2c_1-c_1a_1-c_2b_1,\\
~c_1a_2+c_2^2+a_1c_2+a_2d_2=-b_1a_2-b_2c_2-c_1a_2-c_2b_2,\\
~c_1b_1+c_2d_1+b_1c_1+b_2d_1=-b_1^2-b_2d_1-d_1a_1-d_2b_1,\\
~c_1b_2+c_2d_2+b_1c_2+b_1c_2+b_2d_2=-b_1b_2-b_2d_2-d_1a_2-d_2b_2,\\
~d_1a_1+d_2c_1+c_1^2+c_2d_1=-d_1a_1-d_2c_1-c_1^2-c_2d_1,\\
~d_1a_2+d_2c_2+c_1c_2+c_2d2=-d_1a_2-d_2c_2-c_1c_2-c_2d_2,\\
~d_1b_1+d_2d_1+d_1c_1+d_2d_1=-d_1b_1-d_2d_1-d_1c_1-d_2d_1,\\
~d_1b_2+d_2^2+d_1c_2+d_2^2=-d_1b_2-d_2^2-d_1c_2-d_2^2.
\end{cases}
\eeq
For 
\begin{center}
$a_1=0$\\
$\quad\quad\quad\quad\quad\quad\quad\quad\quad\quad\quad\quad\quad\quad\quad\quad\quad\quad\quad\quad\quad a_2=0\Rightarrow b_2=0, c_2=0, d_2=0, c_1=0, b_1=0$\\
$\quad\quad\quad\quad \quad \quad class:\quad e_2e_2=d_1e_1$.
\end{center}
For 
\begin{center}
$a_1=0$\\
$\quad\quad\quad a_2\neq 0$\\
$\quad\quad\quad\quad\quad\quad\quad\quad\quad \quad \quad\quad\quad\quad\quad \quad \quad\quad\quad\quad\quad\quad b_1=0\Rightarrow c_1=0, d_1=0, d_2=0, c_2=0, b_2=0$\\
$\quad\quad\quad\quad \quad \quad class:\quad e_1e_1=a_2e_2$.
\end{center}
For
\begin{center}
$a_1=0$\\
$\quad\quad\quad\quad\quad\quad\quad\quad\quad\quad\quad\quad\quad\quad\quad\quad d_1\neq 0\Rightarrow c_1=0, d_2=0, b_1=0, b_2=0$\\
 $\quad\quad\quad\quad\quad\quad\quad\quad\quad\quad c_2\neq 0\Rightarrow a_2=0$\\
$\quad\quad\quad\quad \quad \quad class:\quad e_2e_1=c_2e_2$.
\end{center}
The last class is the trivial one ie. $e_ie_j=0$.
The other cases lead to an absurdity. Therefore, by isomorphism we get the following four classes: $e_ie_j=0$, $e_1e_1=e_2$, $e_2e_1=e_2$ and $e_2e_2=e_1$.
\end{proof}
Now, we discuss of the double constructions.\\
$Case(I).$ $e_1\cdot e_1=e_2$. 
The considered product on the dual space is $e_2^{\ast}\circ e_2^{\ast}= e_1^{\ast}.$

Using relation (\ref{pro}) when  $ l_{\mathcal{A}} = R^{\ast}, r_{\mathcal{A}} = L^{\ast}, l_{\mathcal{B}} = l_{\mathcal{A}^{\ast}} = R^{\ast}_{\circ}, r_{\mathcal{B}} = r_{\mathcal{A}^{\ast}} = L^{\ast}_{\circ} $, we obtain the double construction of pre-JJ algebra $ ( \mathcal{A}\oplus \mathcal{A}^{\ast}, \ast, \mathcal{B}) $ associated to $ (\mathcal{A}, \cdot) $ and $ (\mathcal{A}^{\ast}, \circ)$ given explicitly by the following relations:
\begin{align*}
(e_1+e_1^{\ast})\ast (e_1+e_1^{\ast})&=(e_1\cdot e_1 +R^{\ast}_{\circ}(e_1^{\ast})e_1+L^{\ast}_{\circ}(e_1^{\ast})e_1)+(e_1^{\ast}\circ e_1^{\ast}+R_{\cdot}^{\ast}(e_1)e_1^{\ast}+L_{\cdot}^{\ast}(e_1)e_1^{\ast} ),\\
&=e_2,\\
(e_1+e_1^{\ast})\ast (e_1+e_2^{\ast})&=(e_1\cdot e_1 +R^{\ast}_{\circ}(e_1^{\ast})e_1+L^{\ast}_{\circ}(e_2^{\ast})e_1)+(e_1^{\ast}\circ e_2^{\ast}+R_{\cdot}^{\ast}(e_1)e_2^{\ast}+L_{\cdot}^{\ast}(e_1)e_1^{\ast} ),\\
&=2e_2+ e_1^{\ast},\\
(e_1+e_1^{\ast})\ast (e_2+e_1^{\ast})&=(e_1\cdot e_2 +R^{\ast}_{\circ}(e_1^{\ast})e_2+L^{\ast}_{\circ}(e_1^{\ast})e_1)+(e_1^{\ast}\circ e_1^{\ast}+R_{\cdot}^{\ast}(e_1)e_1^{\ast}+L_{\cdot}^{\ast}(e_2)e_1^{\ast} ),\\
&=0,\\
(e_1+e_1^{\ast})\ast (e_2+e_2^{\ast})&=(e_1\cdot e_2 +R^{\ast}_{\circ}(e_1^{\ast})e_2+L^{\ast}_{\circ}(e_2^{\ast})e_1)+(e_1^{\ast}\circ e_2^{\ast}+R_{\cdot}^{\ast}(e_1)e_2^{\ast}+L_{\cdot}^{\ast}(e_2)e_1^{\ast} ),\\
&=e_2+ e_1^{\ast},\\
(e_1+e_2^{\ast})\ast (e_1+e_1^{\ast})&=(e_1\cdot e_1 +R^{\ast}_{\circ}(e_2^{\ast})e_1+L^{\ast}_{\circ}(e_1^{\ast})e_1)+(e_2^{\ast}\circ e_1^{\ast}+R_{\cdot}^{\ast}(e_1)e_1^{\ast}+L_{\cdot}^{\ast}(e_1)e_1^{\ast} ),\\
&=2e_2,\\
(e_1+e_2^{\ast})\ast (e_1+e_2^{\ast})&=(e_1\cdot e_1 +R^{\ast}_{\circ}(e_2^{\ast})e_1+L^{\ast}_{\circ}(e_2^{\ast})e_1)+(e_2^{\ast}\circ e_2^{\ast}+R_{\cdot}^{\ast}(e_1)e_2^{\ast}+L_{\cdot}^{\ast}(e_1)e_2^{\ast} ),\\
&=3e_2+3e_1^{\ast},\\
(e_2+e_1^{\ast})\ast (e_1+e_1^{\ast})&=(e_2\cdot e_1 +R^{\ast}_{\circ}(e_1^{\ast})e_1+L^{\ast}_{\circ}(e_2^{\ast})e_1)+(e_1^{\ast}\circ e_1^{\ast}+R_{\cdot}^{\ast}(e_2)e_1^{\ast}+L_{\cdot}^{\ast}(e_1)e_1^{\ast} ),\\
&=e_2,\\
(e_2+e_1^{\ast})\ast (e_1+e_2^{\ast})&=(e_2\cdot e_1 +R^{\ast}_{\circ}(e_1^{\ast})e_1+L^{\ast}_{\circ}(e_2^{\ast})e_2)+(e_1^{\ast}\circ e_2^{\ast}+R_{\cdot}^{\ast}(e_2)e_2^{\ast}+L_{\cdot}^{\ast}(e_1)e_1^{\ast} ),\\
&=0,\\
(e_2+e_2^{\ast})\ast (e_1+e_1^{\ast})&=(e_2\cdot e_1 +R^{\ast}_{\circ}(e_2^{\ast})e_1+L^{\ast}_{\circ}(e_1^{\ast})e_2)+(e_2^{\ast}\circ e_1^{\ast}+R_{\cdot}^{\ast}(e_2)e_1^{\ast}+L_{\cdot}^{\ast}(e_1)e_2^{\ast} ),\\
&=e_2+e_1^{\ast},\\
(e_2+e_2^{\ast})\ast (e_1+e_2^{\ast})&=(e_2\cdot e_1 +R^{\ast}_{\circ}(e_2^{\ast})e_1+L^{\ast}_{\circ}(e_2^{\ast})e_2)+(e_2^{\ast}\circ e_2^{\ast}+R_{\cdot}^{\ast}(e_2)e_2^{\ast}+L_{\cdot}^{\ast}(e_1)e_2^{\ast} ),\\
&=e_2+2e_1^{\ast},\\
(e_2+e_2^{\ast})\ast (e_2+e_1^{\ast})&=(e_2\cdot e_2 +R^{\ast}_{\circ}(e_2^{\ast})e_2+L^{\ast}_{\circ}(e_1^{\ast})e_2)+(e_2^{\ast}\circ e_1^{\ast}+R_{\cdot}^{\ast}(e_2)e_1^{\ast}+L_{\cdot}^{\ast}(e_2)e_2^{\ast} ),\\
&=0,\\
(e_2+e_2^{\ast})\ast (e_2+e_2^{\ast})&=(e_2\cdot e_2 +R^{\ast}_{\circ}(e_2^{\ast})e_2+L^{\ast}_{\circ}(e_2^{\ast})e_2)+(e_2^{\ast}\circ e_2^{\ast}+R_{\cdot}^{\ast}(e_2)e_2^{\ast}+L_{\cdot}^{\ast}(e_2)e_2^{\ast} ),\\
&=e_1^{\ast},\\
(e_1+e_2^{\ast})\ast (e_2+e_1^{\ast})&=(e_1\cdot e_2 +R^{\ast}_{\circ}(e_2^{\ast})e_2+L^{\ast}_{\circ}(e_1^{\ast})e_1)+(e_2^{\ast}\circ e_1^{\ast}+R_{\cdot}^{\ast}(e_1)e_1^{\ast}+L_{\cdot}^{\ast}(e_2)e_2^{\ast} ),\\
&=0,\\
(e_1+e_2^{\ast})\ast (e_2+e_2^{\ast})&=(e_1\cdot e_2 +R^{\ast}_{\circ}(e_2^{\ast})e_2+L^{\ast}_{\circ}(e_2^{\ast})e_1)+(e_2^{\ast}\circ e_2^{\ast}+R_{\cdot}^{\ast}(e_1)e_2^{\ast}+L_{\cdot}^{\ast}(e_2)e_2^{\ast} ),\\
&=e_2+2e_1^{\ast}),\\
(e_2+e_1^{\ast})\ast (e_2+e_1^{\ast})&=(e_2\cdot e_2 +R^{\ast}_{\circ}(e_1^{\ast})e_2+L^{\ast}_{\circ}(e_1^{\ast})e_2)+(e_1^{\ast}\circ e_1^{\ast}+R_{\cdot}^{\ast}(e_2)e_1^{\ast}+L_{\cdot}^{\ast}(e_2)e_1^{\ast} ),\\
&=0,\\
(e_2+e_1^{\ast})\ast (e_2+e_2^{\ast})&=(e_2\cdot e_2 +R^{\ast}_{\circ}(e_1^{\ast})e_2+L^{\ast}_{\circ}(e_2^{\ast})e_2)+(e_1^{\ast}\circ e_2^{\ast}+R_{\cdot}^{\ast}(e_2)e_2^{\ast}+L_{\cdot}^{\ast}(e_2)e_1^{\ast} ),\\
&=0.
\end{align*}
$Case(II).$ $e_2\cdot e_1=e_2$. 
The product on the dual space is given by:
$
e_i^{\ast}\circ e_j^{\ast}=0,\quad\quad i,j=1,2.
$
The double construction of pre-JJ algebra $ ( \mathcal{A}\oplus \mathcal{A}^{\ast}, \ast, \mathcal{B}) $ associated to $ (\mathcal{A}, \cdot) $ and $ (\mathcal{A}^{\ast}, \circ)$ given explicitly by the following relations:
\begin{align*}
(e_1+e_1^{\ast})\ast (e_1+e_1^{\ast})&=0 ,\\
(e_1+e_1^{\ast})\ast (e_1+e_2^{\ast})&=e_2^{\ast},\\
(e_1+e_1^{\ast})\ast (e_2+e_1^{\ast})&=0,\\
(e_1+e_1^{\ast})\ast (e_2+e_2^{\ast})&=e_2^{\ast},\\
(e_1+e_2^{\ast})\ast (e_1+e_1^{\ast})&=0,\\
(e_1+e_2^{\ast})\ast (e_1+e_2^{\ast})&=e_2^{\ast},\\
(e_2+e_1^{\ast})\ast (e_1+e_1^{\ast})&=e_2,\\
(e_2+e_1^{\ast})\ast (e_1+e_2^{\ast})&=e_2,\\
(e_2+e_2^{\ast})\ast (e_1+e_1^{\ast})&=e_2,\\
(e_2+e_2^{\ast})\ast (e_1+e_2^{\ast})&=e_1^{\ast},\\
(e_2+e_2^{\ast})\ast (e_2+e_1^{\ast})&=e_1^{\ast},\\
(e_2+e_2^{\ast})\ast (e_2+e_2^{\ast})&=e_1^{\ast},\\
(e_1+e_2^{\ast})\ast (e_2+e_1^{\ast})&=e_1^{\ast},\\
(e_1+e_2^{\ast})\ast (e_2+e_2^{\ast})&=e_1^{\ast} + e_2^{\ast},\\
(e_2+e_1^{\ast})\ast (e_2+e_1^{\ast})&=0,\\
(e_2+e_1^{\ast})\ast (e_2+e_2^{\ast})&=0.
\end{align*}
$Case(III).$ $e_2\cdot e_2=e_1$. 
The product on the dual space is given by the following relations:
$
e_2^{\ast}\circ e_1^{\ast}=e_2^{\ast}.
$
The double construction of pre-JJ algebra $ ( \mathcal{A}\oplus \mathcal{A}^{\ast}, \ast, \mathcal{B}) $ associated to $ (\mathcal{A}, \cdot) $ and $ (\mathcal{A}^{\ast}, \circ)$ given explicitly by the following relations:
\begin{align*}
(e_1+e_1^{\ast})\ast (e_1+e_1^{\ast})&=0,\\
(e_1+e_1^{\ast})\ast (e_1+e_2^{\ast})&=0,\\
(e_1+e_1^{\ast})\ast (e_2+e_1^{\ast})&=e_2+e_2^{\ast},\\
(e_1+e_1^{\ast})\ast (e_2+e_2^{\ast})&=e_2+e_2^{\ast},\\
(e_1+e_2^{\ast})\ast (e_1+e_1^{\ast})&=e_2^{\ast},\\
(e_1+e_2^{\ast})\ast (e_1+e_2^{\ast})&=0,\\
(e_2+e_1^{\ast})\ast (e_1+e_1^{\ast})&=e_2^{\ast},\\
(e_2+e_1^{\ast})\ast (e_1+e_2^{\ast})&=e_1+e_2^{\ast}),\\
(e_2+e_2^{\ast})\ast (e_1+e_1^{\ast})&=2e_2^{\ast},\\
(e_2+e_2^{\ast})\ast (e_1+e_2^{\ast})&=e_1,\\
(e_2+e_2^{\ast})\ast (e_2+e_1^{\ast})&=e_1+2e_2^{\ast},\\
(e_2+e_2^{\ast})\ast (e_2+e_2^{\ast})&=2e_1,\\
(e_1+e_2^{\ast})\ast (e_2+e_1^{\ast})&=e_2^{\ast},\\
(e_1+e_2^{\ast})\ast (e_2+e_2^{\ast})&=0,\\
(e_2+e_1^{\ast})\ast (e_2+e_1^{\ast})&=e_1+e_2+e_2^{\ast},\\
(e_2+e_1^{\ast})\ast (e_2+e_2^{\ast})&=2e_1+e_2+2e_2^{\ast}.
\end{align*}

\begin{remark}
Let $(\A,\cdot)$ be a one dimensional pre-JJ algebra with a basis $\{e_1\}$. We obtain the trivial class $e_ie_j=0$.
\end{remark}
\section{Concluding remarks}
In this work, we defined pre-JJ algebras and, discussed their bimodule and matched pair. We established the double construction of JJ algebras and pre-JJ algebras. Finally, we described some double contructions of symmetric pre-JJ algebras in dimension two.


\begin{thebibliography}{999}
%

\bibitem{[ee]} A.L. Agore and G. Militaru,
 {On a type of commutative algebras}, Lin. Algebra Appl. 485 (2015), 222-249; arXiv:1507.08146. 
 
\bibitem{[C.Bai5]} C. Bai,
 {Double constructions of Frobenius algebras, Connes cocycle and their duality}.
 J. Noncommut. Geom. \textbf{4} (2010), pp. 475 - 530.
 
 \bibitem{[MB]} M. Bordemann, T. Filk, C. Nowak, 
 {Algebraic classification of actions invariant under generalized flip moves of $2-$dimensional graph}. J. Math. Phys. \textbf{35} (1994), pp. 4964-4988.
 
 \bibitem{[JK]} J. Kock,
{ Frobenius algebras and $2D$ topological quantum field theories.} London Mathematical Society Student Texts, 59. Cambrige University Press, Cambrige, 2004. 
 
 

\bibitem{[DB]} D. Burde and A. Fialowski,
 {Jacobi-Jordan algebras}.
 Lin. Algebra Appl. Vol.\textbf{459} (2014), pp. 586-594; arXiv:1404.5435.
 World Scientific publishing company.


%


%

%

\bibitem{[1]}
 S. Walcher, 
 {On algebras of rank three.}
    Comm. Algebra 24 27 (1999), 3401-3438.
    
\bibitem{[2]}
 V.N. Zhelyabin, 
 {Finite-dimensional Jordan algebras admitting the structure of a Jordan bialgebra,}
    Algebra and Logic 38 (1999), 21-35 (English translation).

\bibitem{[3]}
 A.M. Slin'ko, I.P. Shestakov, and A.I. Shirshov, 
 {Rings That Are Nearly Associative,}
    Nauka, Moscow, 1978 (in Russian); Academic Press, 1982 (English translation). 
    
%
\bibitem{[WO]}
 A. W\"orz-Busekros, 
 {Bernstein algebras.}
    Arch. Math. 48 (1987) 388-398. 
  
%
%
%

\bibitem{[OK]} S. Okubo and N. Kamiya, Jordan-Lie super algebra and Jordan-Lie triple system, J. Algebra 198 (1997), 388.

\bibitem{[13]} J. Neumann, J. von Neumann, , E. Wigner, On an algebraic generalization of the quantum mechanical formalism, Ann. Math. 35 (1934) 29-64.


\bibitem{[Pasha]} E. Getzler and M. Kapranov, Cyclic operads and cyclic homology, Geometry, Topology and Physics for Raoul Bott (ed. S.T. Yau), International Press, 1995, 167-201. 

 \bibitem{[Zumano]} P. Zumanovich, Special and exceptional Mock-Lie algebras, Linear Algebra and its Applications (2017), 518, 79-96.
\end{thebibliography}
\end{document}